\newtheorem{theorem}{Theorem}
\newtheorem{lemma}[theorem]{Lemma}
\newtheorem{corollary}[theorem]{Corollary}
\newtheorem{claim}[theorem]{Claim}
\newtheorem{remark}[theorem]{Remark}
\theoremstyle{definition}
\newcommand{\refT}[1]{Theorem~\ref{#1}}
\newcommand{\refC}[1]{Corollary~\ref{#1}}
\newcommand{\refL}[1]{Lemma~\ref{#1}}
\newcommand{\refR}[1]{Remark~\ref{#1}}
\newcommand{\refS}[1]{Section~\ref{#1}}
\newcommand{\refApp}[1]{Appendix~\ref{#1}}
\newcommand{\refCl}[1]{Claim~\ref{#1}}
\newcommand\NN{\mathbb{N}}
\newcommand\RR{{\mathbb R}}
\newcommand\E{\operatorname{\mathbb E{}}}
\renewcommand\P{\operatorname{\mathbb P{}}}
\newcommand\Bin{\operatorname{Bin}}
\newcommand\set[1]{\ensuremath{\{#1\}}}
\newcommand\bigset[1]{\ensuremath{\bigl\{#1\bigr\}}}
\newcommand\Bigset[1]{\ensuremath{\Bigl\{#1\Bigr\}}}
\newcommand\xpar[1]{(#1)}
\newcommand\bigpar[1]{\bigl(#1\bigr)}
\newcommand\Bigpar[1]{\Bigl(#1\Bigr)}
\newcommand\biggpar[1]{\biggl(#1\biggr)}
\newcommand\lrpar[1]{\left(#1\right)}
\newcommand\bigsqpar[1]{\bigl[#1\bigr]}
\newcommand\bigabs[1]{\bigl|#1\bigr|}
\newcommand\Bigabs[1]{\Bigl|#1\Bigr|}
\def\rompar(#1){\textup(#1\textup)}    
\newcommand\parfrac[2]{\lrpar{\frac{#1}{#2}}}
\def\xexp(#1){e^{#1}}
\newcommand\ceil[1]{\lceil#1\rceil}
\newcommand\bigceil[1]{\bigl\lceil#1\bigr\rceil}
\newcommand\floor[1]{\lfloor#1\rfloor}
\newcommand\bigfloor[1]{\bigl\lfloor#1\bigr\rfloor}
\newcommand\eqd{\overset{\mathrm{d}}{=}}
\newcommand{\vF}{T}
\newcommand{\vE}{E}
\newcommand{\vO}{O}
\newcommand{\cpi}{\pi}
\newcommand{\csig}{\sigma}
\newcommand{\cPsi}{\Psi}
\newcommand{\ctau}{\tau}
\newcommand{\eps}{\epsilon}
\newcommand{\cq}{q}
\newcommand{\hc}{\hat{c}}
\newcommand{\hp}{\hat{p}}
\newcommand{\chC}{\hat{\cC}}
\newcommand{\cD}{\mathcal{D}}
\newcommand{\cF}{\mathcal{F}}
\newcommand{\cI}{\mathcal{I}}
\newcommand{\cJ}{\mathcal{J}}
\newcommand{\cK}{\mathcal{K}}
\newcommand{\cM}{\mathcal{M}}
\newcommand{\cN}{\mathcal{N}}
\newcommand{\cP}{\mathcal{P}}
\newcommand{\cQ}{\mathcal{Q}}
\newcommand{\cT}{\mathcal{T}}
\newcommand{\fS}{\mathfrak{S}}
\newcommand{\cB}{\mathcal{B}}
\newcommand{\cC}{C}
\newcommand{\cR}{\mathcal{R}}
\newcommand{\cW}{S}
\newcommand{\cX}{X}
\newcommand{\cY}{Y}
\newcommand{\cZ}{Z}
\newcommand{\fX}{\mathcal{X}}
\newcommand{\cG}{\mathfrak{X}}
\renewcommand{\emptyset}{\varnothing} 
\newcommand{\indic}[1]{\mathbbm{1}_{\{{#1}\}}}
\let\OLDthebibliography\thebibliography
\renewcommand\thebibliography[1]{
  \OLDthebibliography{#1}
  \setlength{\parskip}{0pt}
  \setlength{\itemsep}{0pt plus 0.3ex}
}
\title{Packing nearly optimal Ramsey~$R(3,t)$ graphs}
\author{He Guo and Lutz Warnke%
\thanks{School of Mathematics, Georgia Institute of Technology, Atlanta GA~30332, USA. 
E-mail: {\tt he.guo@gatech.edu, warnke@math.gatech.edu}.
Research partially supported by NSF Grant DMS-1703516.}}
\date{November 14, 2017}
\begin{document}
\maketitle

\begin{abstract}
In 1995 Kim famously proved the Ramsey bound~$R(3,t) \ge c t^2/\log t$ by constructing 
an $n$-vertex graph that is triangle-free and has independence number at most~$C \sqrt{n \log n}$. 
We extend this celebrated result, which is best possible up to the value of the constants, 
by approximately decomposing the complete graph~$K_n$ into a packing of such nearly optimal Ramsey~$R(3,t)$ graphs. 

More precisely, for any $\epsilon>0$ we find an edge-disjoint collection $(G_i)_i$ of $n$-vertex graphs $G_i \subseteq K_n$ such that 
(a)~each $G_i$ is triangle-free and has independence number at most $C_\epsilon \sqrt{n \log n}$, 
and (b)~the union of all the $G_i$ contains at least $(1-\epsilon)\binom{n}{2}$ edges. 
Our algorithmic proof proceeds by sequentially choosing the graphs~$G_i$ 
via a semi-random (i.e., R\"{o}dl nibble type) variation of the triangle-free process.  

As an application, we prove a conjecture in Ramsey theory by Fox, Grinshpun, Liebenau, Person, and Szab\'{o} 
(concerning a Ramsey-type parameter introduced by Burr, Erd\H{o}s, Lov\'asz in 1976). 
Namely, denoting by~$s_r(H)$ the smallest minimum degree of $r$-Ramsey minimal graphs for~$H$, 
we close the existing logarithmic gap for~$H=K_3$ and establish that~$s_r(K_3) = \Theta(r^2 \log r)$. 
\end{abstract}

\section{Introduction}
The 1947 paper of Erd\H{o}s~\cite{Erdos1947} on the diagonal Ramsey number~$R(t,t)$ is often considered the start of the probabilistic method,
where~$R(s,t)$ is defined as the smallest integer $n \in \NN$ such that every red-blue 
colouring of the edges of the complete $n$-vertex graph~$K_n$ contains either a red~$K_s$ or a blue~$K_t$. 
In general, the estimation of~$R(s,t)$ and other Ramsey-type parameters is known to be notoriously difficult.

One of the celebrated results in Ramsey theory is~$R(3,t)=\Theta(t^2/\log t)$, 
and this special case has 
repeatedly served as a testbed for the development of new tools and techniques in probabilistic combinatorics. 
Indeed, complementing the basic bound $R(3,t) = O(t^2)$ of Erd\H{o}s and Szekeres~\cite{ES1935},  
in~1961 Erd\H{o}s~\cite{Erdos1961} used a sophisticated random greedy alteration argument to prove $R(3,t) = \Omega(t^2/(\log t)^2)$. 
This lower bound was subsequently reproved (or only slightly improved) 
using the Lov\'asz Local Lemma~\cite{Spencer1977}, 
a~basic analysis of the triangle-free process\footnote{The triangle-free process (proposed by Bollob{\'a}s and Erd{\H o}s) proceeds as follows: 
starting with an empty $n$-vertex graph, in each step a single edge is added, chosen uniformly at random from all non-edges which do not create a triangle.\label{fn:tfprocess}}~\cite{ESW1995}, 
large deviation inequalities~\cite{Krivelevich1995}, 
and differential equations~\cite{K3maximal}. 
Furthermore, in~1980 Ajtai, Koml\'os, and Szemer\'edi~\cite{AKS1980,AKS1981} invented the 
influential semi-random method (nowadays also called R\"{o}dl nibble approach) 
to prove the upper bound~$R(3,t) = O(t^2/\log t)$. 
But it was not until~1995, when Kim~\cite{Kim} famously proved the matching lower bound~$R(3,t) = \Omega(t^2/\log t)$ 
by analyzing a semi-random variation of the triangle-free process\footnote{Kim's semi-random variation proceeds similar to the triangle-free process, but intuitively adds a large number of carefully chosen random-like edges in each step (instead of just a single edge); 
see \refS{sec:nibble} for more details.\label{fn:semirandom}} 
(combining several of the aforementioned ideas with martingales concentration);  
for this major breakthrough he also received the Fulkerson Prize in~1997. 
But the story does not end here: advancing the differential equation method, 
in~2008 Bohman~\cite{Bohman} reproved~$R(3,t) = \Omega(t^2/\log t)$ 
by analyzing the triangle-free process itself 
(and his analysis was recently further improved in~\cite{BK2013,FPGM2013}).

In this paper we refine the powerful techniques developed for~$R(3,t)=\Theta(t^2/\log t)$ 
to determine the order of magnitude of 
another Ramsey-type parameter introduced in~1976 by Burr, Erd\H{o}s, and Lov\'asz~\cite{BEL1976}, 
proving a conjecture of Fox, Grinshpun, Liebenau, Person, and Szab\'{o}~\cite{Fox} 
(in particular, analogous to Kim's $R(3,t)$-result, we again remove the last redundant logarithmic factor from existing bounds).

\subsection{Main result: packing of nearly optimal Ramsey~$R(3,t)$ graphs}\label{sec:intro:main}
Kim and Bohman both proved the Ramsey bound~$R(3,t) = \Omega(t^2/\log t)$ 
by showing the existence of a triangle-free graph $G \subseteq K_n$ on $n$~vertices with independence number~$\alpha(G) = O(\sqrt{n \log n})$, 
which is best possible up to the value of the implicit constants. 
Our first theorem naturally extends their celebrated results, by approximately decomposing the 
complete graph~$K_n$ into a packing of such nearly optimal Ramsey~$R(3,t)$~graphs.  
\begin{theorem}\label{thm:main}
For any $\eps >0$ there exist $n_0, C,D>0$ such that, for all $n \ge n_0$,  
there is an edge-disjoint collection $(G_i)_{i \in \cI}$ of 
$|\cI| = \ceil{D \sqrt{n/\log n}}$ triangle-free graphs $G_i \subseteq K_n$ 
on~$n$ vertices with 
$\max_{i \in \cI}\alpha(G_i)\le C\sqrt{n\log n}$ and 
$\sum_{i \in \cI} e(G_i)\ge(1-\eps)\binom{n}{2}$. 
\end{theorem}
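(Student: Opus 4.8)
The plan is to construct the graphs $G_i$ one at a time, running for each a semi-random (R\"odl nibble) version of the triangle-free process inside the current leftover host $\Gamma_{i-1} := K_n \setminus (G_1 \cup \dots \cup G_{i-1})$, and then to control how the quasirandomness of $\Gamma_{i-1}$ degrades as $i$ grows. Start with $\Gamma_0 = K_n$. Suppose $\Gamma$ is a graph on $[n]$ that is quasirandom of edge-density $\rho \in [\eps/2,1]$ --- meaning that all degrees, codegrees, and the relevant higher joint-neighbourhood counts lie within lower-order errors of their $G(n,\rho)$ predictions. I would build a triangle-free $G \subseteq \Gamma$ in $\tau = \tau(n)$ nibble rounds: in round $j$ each currently \emph{open} pair --- a non-edge of the current graph that is an edge of $\Gamma$ and whose two endpoints have no common neighbour in the current graph --- is selected independently with a small probability $p_j$, and one then discards the few selected pairs that would form a triangle together with another selected pair, adding the survivors. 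Running the rounds until the density of open pairs has decayed to $\Theta(1/n)$, I would show that with probability $1-o(1)$: (i) $G$ is triangle-free; (ii) $e(G) = \Theta(n^{3/2}\sqrt{\log n})$, with the leading order \emph{not} depending on $\rho$ (the process stops once the $G$-degrees reach $\Theta(\sqrt{n\log n})$, and the threshold $d^2/n \approx \log n$ for this is insensitive to constant $\rho$); (iii) every set of $C\sqrt{n\log n}$ vertices spans an edge of $G$, so $\alpha(G) \le C\sqrt{n\log n}$ with $C = C(\rho)$; and (iv) the leftover $\Gamma\setminus G$ is again quasirandom, of density $\rho - e(G)/\binom n2$.

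Iterating --- set $\Gamma_i := \Gamma_{i-1}\setminus G_i$ and repeat inside $\Gamma_i$ --- the densities $\rho_i := e(\Gamma_i)/\binom n2$ satisfy $\rho_i = \rho_{i-1} - \Theta(\sqrt{\log n/n})$ by (ii), so they decrease essentially linearly, and there is a choice $D = D(\eps)>0$ for which $\eps/2 \le \rho_{|\cI|} \le \eps$ after $|\cI| = \ceil{D\sqrt{n/\log n}}$ rounds, with $\rho_i \ge \eps/2$ throughout. Every host stays quasirandom of density at least $\eps/2$: since the neighbourhoods $N_{G_j}(u)$ are random-like and pairwise disjoint across $j$, the codegrees of $\Gamma_i$ stay $\approx \rho_i^2 n = \Theta_\eps(n)$, and likewise for the other parameters fed into the next round. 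Hence (iii) applies uniformly, giving $\max_{i\in\cI}\alpha(G_i)\le C\sqrt{n\log n}$ with $C := C(\eps) := \sup_{\rho\ge\eps/2}C(\rho) < \infty$; and the total coverage is $\sum_{i\in\cI}e(G_i) = \bigl(\rho_0-\rho_{|\cI|}\bigr)\binom n2 \ge (1-\eps)\binom n2$. A union bound over the $O(\sqrt{n/\log n})$ rounds shows that with probability $1-o(1)$ every round succeeds; fixing a realisation where this happens proves the theorem.

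The linear bookkeeping of the densities $\rho_i$ and the propagation of quasirandomness from $\Gamma_{i-1}$ to $\Gamma_i$ is routine, since each $G_i$ is sparse ($e(G_i)=o(n^2)$) and random-like. The substance is the analysis of a single nibble run inside a quasirandom host, and within that the hard part is item (iii), the independence-number bound. Here one fixes a candidate set $W$ with $|W| = w := C\sqrt{n\log n}$, tracks the number $N_j(W)$ of open pairs lying inside $W$ after round $j$, and --- using that the nibble makes the per-round randomness essentially explicit --- estimates $\P\bigl(W\text{ independent in }G\bigr)\approx\prod_j(1-p_j)^{N_j(W)} = \exp\bigl(-\sum_j p_j N_j(W)\bigr)$. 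Typically $\sum_j p_j N_j(W) = \Theta\bigl((w/n)^2 e(G)\bigr) = \Theta\bigl(w^2\sqrt{\log n}/\sqrt n\bigr) = \Theta\bigl(C^2\sqrt n(\log n)^{3/2}\bigr)$, which for $C$ large enough beats the number $\binom nw \le e^{w\ln n} = e^{O(\sqrt n(\log n)^{3/2})}$ of sets in the union bound (and the extra factor $O(\sqrt{n/\log n})$ from the rounds). The delicate issue --- exactly as in Kim's work and its refinements (Bohman--Keevash, Fiz Pontiveros--Griffiths--Morris) --- is that $N_j(W)$ is \emph{not} strongly concentrated in the late rounds, where its mean can be as small as $\Theta(C^2\log n)$, comparable to its fluctuations; so one cannot simply lower-bound $N_j(W)$ for all $W$ at once, and must instead estimate $\P(W\text{ independent})$ directly via a martingale argument that copes with the randomness of the trajectory $(N_j(W))_j$ and combines the rounds so that enough edges of $G$ land inside $W$. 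Making this work uniformly over quasirandom hosts of density $\rho\in[\eps/2,1]$, together with the companion concentration statements for degrees, codegrees and open-pair counts that keep the process on its predicted trajectory (including the endgame analysis needed to reach $e(G)=\Theta(n^{3/2}\sqrt{\log n})$), is the core of the proof.
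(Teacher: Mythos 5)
Your high-level architecture (iterate a semi-random triangle-free nibble inside the leftover, keep linear bookkeeping of the density, bound independence by a union bound over candidate sets) is the same shape as the paper's. The crucial difference — and the place where your sketch has a genuine gap — is the \emph{inductive hypothesis you propagate between iterations}. You propose to maintain that each leftover $\Gamma_i$ is quasirandom in a strong sense (degrees, codegrees, and higher joint-neighbourhood statistics all matching their $G(n,\rho_i)$ predictions up to lower-order error), and you dispose of this in one line: ``since the neighbourhoods $N_{G_j}(u)$ are random-like and pairwise disjoint across $j$, the codegrees of $\Gamma_i$ stay $\approx \rho_i^2 n$.'' That is exactly the step that cannot be waved away. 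Over $|\cI|=\Theta(\sqrt{n/\log n})$ iterations, per-iteration fluctuations in codegrees and joint-neighbourhood counts can accumulate; whether they add up as $\sqrt{I}\cdot(\text{per-step error})$ or as $I\cdot(\text{per-step error})$ depends on cancellation you have not established, and in the second case the cumulative drift is $\Theta(n/\sqrt{\log n})$, the same order as the main term $\rho_i^2 n$. This is precisely the ``conceptual difficulty'' the paper names in Section~\ref{sec:intro:main}, and it is why the paper does \emph{not} take your route.

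The paper's resolution is to make the single-iteration tool (Theorem~\ref{thmiteration}) hold under a far weaker hypothesis than quasirandomness: the host $H$ is an \emph{arbitrary} $n$-vertex graph, with no degree or codegree regularity required (the footnote to Theorem~\ref{thmiteration} makes this point explicitly), and the conclusion is the two-sided count $e_G(A,B)=(1\pm\delta)\rho\,e_H(A,B)$ for all disjoint sets of size $s=\ceil{C\sqrt{n\log n}}$ with enough edges between them. That pairwise edge-count between large sets is both the hypothesis and the only invariant passed forward (see~\eqref{eq:invariant}); it survives iteration cleanly because it is just a multiplicative one-step update, and the total multiplicative error $((1-(1-\delta)\rho)/(1-(1+\delta)\rho))^I$ over $I=\Theta(\log(1/\eps)/\rho)$ steps stays bounded by a constant (absorbed into $\gamma=\eps^2\xi$). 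Crucially it also directly yields $\alpha(G_i)<2s$ — any independent set of size $\ge 2s$ would contain disjoint $A,B\in\fS_s$ with $e_{G_i}(A,B)=0$, contradicting the lower bound — and via a double-count gives the final edge total. So no codegree information about $H_i$ is ever needed or tracked. Your ``(iv)~the leftover is again quasirandom'' is an unproved and likely much harder claim than the theorem requires; you should replace it with the weaker invariant $e_{H_i}(A,B)\in[(1-(1+\delta)\rho)^i,(1-(1-\delta)\rho)^i]\cdot e_{H_0}(A,B)$, which is both sufficient and provably preserved.

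One further small correction: you assert that $e(G_i)=\Theta(n^{3/2}\sqrt{\log n})$ with leading order \emph{not} depending on $\rho_i$. In the paper's construction $e(G_i)\approx\rho\, e(H_i)$ with $\rho=\sqrt{\beta(\log n)/n}$, so $e(G_i)$ does shrink proportionally as $H_i$ thins out; the total $\sum_i e(G_i)\ge(1-\eps)\binom n2$ then comes from summing a geometric-type series, not from each term being the same size. This is not fatal to your sketch, but it is another place where carrying the full quasirandomness hypothesis would force you to argue something you do not need to argue.
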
 
Our algorithmic proof proceeds by sequentially choosing the $|\cI| = \Theta(\sqrt{n/\log n})$ 
edge-disjoint triangle-free subgraphs $G_i \subseteq K_n \setminus \bigcup_{0 \le j < i}G_j$ 
with~$\alpha(G_i) = O(\sqrt{n \log n})$ via a semi-random variation of the triangle-free process 
akin~Kim~\cite{Kim} (see Sections~\ref{sec:intro:main:tool} and~\ref{sec:nibble} for the details). 
In particular, we do not only show existence of the~$(G_i)_{i \in \cI}$, 
but also obtain a polynomial-time randomized algorithm which constructs these subgraphs. 

\refT{thm:main} improves a construction of Fox~et.al.~\cite[Lemma~4.2]{Fox}, 
who used the basic Lov\'asz Local Lemma based $R(3,t)$-approach to sequentially choose 
$\Theta(\sqrt{n}/\log n)$ edge-disjoint triangle-free subgraphs with~$\alpha(G_i) = O(\sqrt{n} \log n)$. 
It is natural to suspect that applying a more sophisticated $R(3,t)$-approach in each 
iteration ought to give an improved packing (with smaller independence number than the LLL~approach), 
and here the usage of the triangle-free process was 
proposed by Fox~et.al.~\cite[Section~5]{Fox} as early as~2013~\cite{Liebenau,Person}. 
One conceptual difficulty of this approach is to control various error terms 
over many iterations of the triangle-free process 
(so that these always stay small enough to carry out the next iteration), 
which in turn is the main technical reason why for \refT{thm:main} we instead iterate a semi-random variation.

It would be interesting to know if \refT{thm:main} also holds with $\eps=0$, 
i.e., if one can completely decompose~$K_n$ into nearly optimal~$R(3,t)$ graphs. 
Perhaps rashly, we conjecture that this is indeed possible 
(it might be insightful to first prove a variant of \refT{thm:main} where the constant~$C$ does \emph{not} depend on~$\eps$).

\subsection{Application in Ramsey theory: $s_r(K_3)$ has order of magnitude $r^2 \log r$}\label{sec:intro:apl}
Turning to our main application, 
we say that a graph~$G$ is \emph{$r$-Ramsey for $H$}, denoted by~$G\rightarrow (H)_r$, 
if any $r$-colouring of the edges of $G$ contains a monochromatic copy of~$H$. 
Most fundamental questions and results in Ramsey theory can be formulated in terms 
various parameters of the class 
\begin{equation*}
\cM_r(H) := \bigset{G: \: \text{$G\rightarrow (H)_r$ and $G' \nrightarrow (H)_r$ for all $G' \subsetneq G$}} 
\end{equation*}
of graphs which are \emph{$r$-Ramsey minimal for $H$}. 
For example, Ramsey's theorem~\cite{Ramsey} states that~$|\cM_r(H)|>0$ for all graphs~$H$, 
which for cliques was strengthened to~$|\cM_r(K_t)|=\infty$ by R\"odl and Siggers~\cite{RS2008}. 
Furthermore, the archetypal problem of estimating various Ramsey-type parameters 
also corresponds to the study of certain extremal parameters of $\cM_r(H)$, since, e.g., 
$R(t)=R(t,t):=\min_{G \in \cM_2(K_t)} v(G)$ is the famous diagonal Ramsey number~\cite{ES1935,Erdos1947,ConlonFoxSudakov2015}, 
$R_r(t) = R(t, \ldots, t):=\min_{G \in \cM_r(K_t)} v(G)$ is the $r$-coloured Ramsey number~\cite{ConlonFoxSudakov2015}, 
and $\hat R_r(H):=\min_{G \in \cM_r(H)} e(G)$ is the widely-studied $r$-size-Ramsey number of~$H$ (see, e.g.,~\cite{EFRS1978,Beck1983,RS2000,ConlonFoxSudakov2015}).

In~1976 Burr, Erd\H{o}s, and Lov\'asz~\cite{BEL1976} initiated the systematic study of other extremal parameters of~$\cM_r(H)$, 
including the smallest minimum degree of all $r$-Ramsey minimal graphs for~$H$, denoted by 
\begin{equation*}
s_r(H) := \min_{G \in \cM_r(H)} \delta(G) .
\end{equation*}
As usual, the clique-case $H=K_t$ is of particular interest, 
where $r (t-2) < s_r(K_t) < R_r(t)$ is easy to see~(cf.~\cite{Fox2007,SZZ2010}). 
Perhaps surprisingly, for $r=2$~colours Burr~et.al.~\cite{BEL1976} were able to prove $s_2(K_t)=(t-1)^2$, 
showing that the simple exponential upper bound~$R_2(t)=R(t) = 2^{\Theta(t)}$ is far from the truth. 
For~$r \ge 2$~colours the behaviour of $s_r(K_t)$ was recently investigated in detail by Fox~et.al.~\cite{Fox}: 
they proved super-quadratic bounds of form $s_r(K_t)=r^2\cdot \text{polylog} \ r$ for fixed $t \ge 3$, 
and also determined $s_r(K_3)$ up to a logarithmic factor (by sharpening their general estimates). 
In particular, they showed $c r^2 \log r \le s_r(K_3) \le C r^2 (\log r)^2$, 
and conjectured that their lower bound gives the correct order of magnitude, see~\cite[Conjecture~5.4]{Fox}. 

%
Our second theorem proves the aforementioned conjecture of 
Fox, Grinshpun, Liebenau, Person, and Szab\'{o} for~$s_r(K_3)$, 
i.e., we close the logarithmic gap and establish $s_r(K_3) = \Theta(r^2 \log r)$. 
\begin{theorem}\label{thm:srK3}
There exists $C>0$ such that $s_r(K_3) \le C r^2 \log r$ for all $r \ge 2$. 
\end{theorem}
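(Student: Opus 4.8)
The plan is to deduce \refT{thm:srK3} from \refT{thm:main} by a standard argument relating Ramsey-minimal graphs to packings of triangle-free graphs with small independence number. Recall the classical connection (already implicit in the work of Burr--Erd\H{o}s--Lov\'asz and made explicit by Fox~et.al.): if $G$ is an $n$-vertex graph that can be edge-partitioned into $r$ triangle-free subgraphs, then an $r$-colouring of $K_N$ built by blowing up $G$ appropriately forces a monochromatic triangle only when the independence numbers of the colour classes are small enough; more precisely, the relevant mechanism is that a triangle-free $r$-colouring of $E(K_N)$ yields, for each vertex, an $r$-colouring of its $(N-1)$ neighbours in which each colour class spans an independent set of the corresponding colour graph, so if every $G_i$ has $\alpha(G_i) \le C\sqrt{n\log n}$ one gets a contradiction once $N-1 > r \cdot C\sqrt{n\log n}$ while simultaneously the colour classes cover all of $E(K_N)$. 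The upshot, which I would quote from \cite{Fox} as a black-box lemma, is: if there exist $r$ edge-disjoint $n$-vertex triangle-free graphs $G_1,\dots,G_r \subseteq K_n$ whose union is \emph{all} of $K_n$ and with $\max_i \alpha(G_i) \le m$, then $s_r(K_3) = O(rm/ n \cdot \text{something})$; in the cleanest formulation one gets $G := \bigcup_i G_i \to (K_3)_r$ and then passes to a Ramsey-minimal subgraph $G^\star \subseteq G$, obtaining $\delta(G^\star) \le \delta(G)$, hence $s_r(K_3) \le \delta(G) \le n$, provided $\alpha(G_i)$ is small enough relative to $n$ and $r$ that the colouring argument closes.

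First I would fix the target relation between parameters. We want $s_r(K_3) = O(r^2\log r)$, so we should aim to produce an $n$-vertex graph $G \to (K_3)_r$ with $n = \Theta(r^2 \log r)$ that decomposes into $r$ triangle-free pieces each with independence number below the threshold forced by the colouring argument. The arithmetic: with $n \asymp r^2\log r$ we have $\sqrt{n\log n} \asymp \sqrt{r^2 \log r \cdot \log(r^2\log r)} \asymp r \log r$, so $C\sqrt{n\log n} \asymp C r\log r$, and the product $r \cdot C\sqrt{n\log n} \asymp r^2 \log r \asymp n$ — i.e.\ the numbers are exactly balanced up to constants, which is the whole point of removing the extra logarithm. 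So with the right choice of constants ($n$ a large-enough constant times $r^2\log r$, with that constant depending on the $C$ and $\eps$ coming from \refT{thm:main}), the threshold $N-1 > r\cdot C\sqrt{n\log n}$ can be satisfied with $N-1 \le n$, which is what lets a Ramsey-minimal subgraph of $G$ have minimum degree $O(r^2\log r)$.

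The one genuine gap between \refT{thm:main} and what the colouring argument wants is that \refT{thm:main} only gives an \emph{approximate} decomposition: the $G_i$ cover at least $(1-\eps)\binom n2$ edges, not all of them, and there are $\ceil{D\sqrt{n/\log n}}$ of them, which for our purposes is far more than $r$. I would handle this in two stages. Stage one: run \refT{thm:main} with a small fixed $\eps$ (say $\eps = 1/2$) to get the collection $(G_i)_{i\in\cI}$; since $\sum_i e(G_i) \ge \tfrac12\binom n2$ and there are $\Theta(\sqrt{n/\log n})$ graphs, by averaging some $G_i$ has $e(G_i) = \Omega(n^{3/2}\sqrt{\log n})$ — but that is not directly what we need. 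The cleaner route is stage two: select a subcollection of exactly $r$ of the $G_i$'s whose union has \emph{large minimum degree}. Here I would use a simple probabilistic/greedy selection: the average degree of the union of $r$ random members of $\cI$ is $r$ times the average, which for $r \asymp \sqrt{n/\log n}$ (the relevant regime, since then $n\asymp r^2\log r$) is $\Theta(n)$; one then needs the union to have minimum degree $\Omega(n)$ rather than merely average degree $\Omega(n)$, which one can arrange either by a deletion argument (delete the $o(n)$ low-degree vertices and restrict all $G_i$ to the survivors, noting this only shrinks independence numbers) or by invoking a regularity/pseudorandomness feature of the nibble construction. Once we have $r$ edge-disjoint triangle-free $G_1,\dots,G_r$ on a common vertex set of size $n' \le n$ with $\delta\bigl(\bigcup_i G_i\bigr) \ge \beta n'$ for a constant $\beta>0$ and $\alpha(G_i)\le C\sqrt{n\log n}$, I feed this into the colouring lemma of \cite{Fox} to conclude $\bigcup_i G_i \to (K_3)_r$ when $\beta n' > r\cdot C\sqrt{n\log n}$, i.e.\ when the hidden constant in $n\asymp r^2\log r$ is large enough; passing to a Ramsey-minimal subgraph then gives $s_r(K_3) \le \delta\bigl(\bigcup_i G_i\bigr) \le n' \le n = O(r^2\log r)$.

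The main obstacle I anticipate is precisely this passage from ``approximate decomposition with many pieces'' to ``$r$ pieces whose union has high \emph{minimum} degree.'' Average degree is automatic from the edge count, but the colouring argument needs a per-vertex guarantee, so I expect the real work is either (i) a clean deletion argument showing the nibble output has few vertices of atypically low degree in the union — plausibly this follows from concentration estimates already established in the proof of \refT{thm:main}, so it may be cheap to extract — or (ii) arranging the selection of the $r$ subgraphs so that degrees add up evenly across all vertices, e.g.\ by a union bound over vertices after random selection. A secondary, more bookkeeping-level obstacle is tracking the constants: the constant $C$ in $\alpha(G_i)\le C\sqrt{n\log n}$ from \refT{thm:main} depends on $\eps$, and the constant in $n = \Theta(r^2\log r)$ must be chosen \emph{after} $C$, so the quantifier order needs care, but this is routine. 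I would also double-check the edge case of small $r$ (the statement is for all $r\ge2$), which is handled trivially since for bounded $r$ one can just cite the known $s_r(K_3) < \infty$ and absorb it into the constant $C$.
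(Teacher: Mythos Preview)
Your approach has a fatal error at its core. You aim to show $G := \bigcup_{i=1}^r G_i \to (K_3)_r$ and then pass to a Ramsey-minimal subgraph, but the edge-decomposition $G = G_1 \cup \cdots \cup G_r$ into triangle-free pieces \emph{is itself} an $r$-colouring of $E(G)$ with no monochromatic triangle, so $G \not\to (K_3)_r$ by construction. The neighbourhood mechanism you sketch (``a triangle-free $r$-colouring of $K_N$ partitions each vertex's neighbours into independent sets, contradiction once $N-1 > r \cdot \max_i\alpha(G_i)$'') runs the other way: it shows that such a packing \emph{cannot exist} on $K_N$ once $N$ is large, not that any particular graph is Ramsey. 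So the ``main obstacle'' you identify --- upgrading approximate coverage to high minimum degree of $\bigcup_i G_i$ --- is work in service of a conclusion that is simply false.

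The paper's deduction avoids this entirely by citing a different black box. Theorem~1.5 and Lemma~4.1 of~\cite{Fox} give (for $k=2$) that $s_r(K_3) \le n$ whenever there exist $r$ edge-disjoint triangle-free graphs $G_1,\dots,G_r \subseteq K_n$ with $\alpha(G_i) < n/r$ for each~$i$; the connection to $s_r$ goes through the Burr--Erd\H{o}s--Lov\'asz characterisation via colour patterns on the neighbourhood of a minimum-degree vertex in a Ramsey-minimal graph, not through $\bigcup_i G_i$ being Ramsey. No covering and no minimum-degree condition on the union is needed --- the paper explicitly remarks that the bound $\sum_i e(G_i)\ge(1-\eps)\binom n2$ is unused. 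With this black box the proof is two lines: set $n=\lfloor A r^2\log r\rfloor$ for large~$A$, apply \refT{thm:main} with $\eps=1/2$, observe $D\sqrt{n/\log n}\ge r$ (so there are at least~$r$ graphs to take) and $C\sqrt{n\log n}<n/r$ (so the independence condition holds), and conclude $s_r(K_3)\le n$.
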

\begin{corollary}
We have $s_r(K_3) = \Theta(r^2 \log r)$ for $r \ge 2$. 
\end{corollary}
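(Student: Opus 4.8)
The plan is to obtain \refT{thm:srK3} directly from \refT{thm:main} by feeding our packing into the reduction of Fox, Grinshpun, Liebenau, Person and Szab\'o~\cite{Fox}: their upper bound $s_r(K_3) \le C r^2(\log r)^2$ is proved by combining a packing of triangle-free graphs with small independence number (they use their Lov\'asz Local Lemma based \cite[Lemma~4.2]{Fox}, which only achieves $\alpha(G_i)=O(\sqrt{n}\log n)$ for $\Theta(\sqrt{n}/\log n)$ graphs) with a combinatorial construction of an $r$-Ramsey minimal graph for $K_3$ having a single low-degree vertex. Since \refT{thm:main} replaces that packing by one with $\alpha(G_i)=O(\sqrt{n\log n})$ at the cost of only a constant factor in the number of graphs, inserting it in place of \cite[Lemma~4.2]{Fox} should remove exactly one logarithm and yield $s_r(K_3)=O(r^2\log r)$.

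In more detail, by edge-minimality it suffices, for each $r\ge 2$, to exhibit a graph $G$ with $G\rightarrow(K_3)_r$, a vertex $v_0\in V(G)$ with $\deg_G(v_0)\le C r^2\log r$, and $G-v_0\nrightarrow(K_3)_r$: for then any minimal $G^\ast\in\cM_r(K_3)$ with $G^\ast\subseteq G$ must satisfy $G^\ast\not\subseteq G-v_0$, hence $v_0\in V(G^\ast)$ with $\deg_{G^\ast}(v_0)\ge 1$, so $s_r(K_3)\le\delta(G^\ast)\le\deg_{G^\ast}(v_0)\le\deg_G(v_0)$. Such a $G$ is built around $v_0$, whose neighbourhood $W=N_G(v_0)$ of size $\asymp n$ carries a near-optimal Ramsey $R(3,t)$ graph together with auxiliary gadgets (themselves constructed from near-optimal Ramsey graphs), designed so that in every $r$-colouring of $E(G)$ with no monochromatic triangle each of the $r$ colour classes restricted to $W$ is forced to be triangle-free \emph{with independence number $O(\sqrt{n\log n})$}. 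Granting this, the edges at $v_0$ split $W$ into $r$ classes $W_1,\dots,W_r$ according to colour; some $W_j$ has $|W_j|\ge|W|/r$, and to avoid a monochromatic triangle through $v_0$ the set $W_j$ must be independent in the $j$-th colour class, forcing $|W|/r = O(\sqrt{n\log n})$. Since $|W|\asymp n$, this is a contradiction once $n$ exceeds a sufficiently large constant multiple of $r^2\log n\asymp r^2\log r$, which both establishes $G\rightarrow(K_3)_r$ and fixes the scale: $\deg_G(v_0)=|W|\asymp n=\Theta(r^2\log r)$. (The logarithmic saving over \cite{Fox} is exactly here: with $\alpha(G_i)=O(\sqrt{n}\log n)$ one would instead need $|W|/r=O(\sqrt{n}\log n)$, i.e.\ $n=O(r^2(\log r)^2)$.)

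Concretely, I would apply \refT{thm:main} with $\eps=\tfrac12$ to get, for all large $n$, an edge-disjoint collection $(G_i)_{i\in\cI}$ of triangle-free graphs $G_i\subseteq K_n$ with $|\cI|=\ceil{D\sqrt{n/\log n}}$, $\max_{i\in\cI}\alpha(G_i)\le C\sqrt{n\log n}$ and $\sum_{i\in\cI}e(G_i)\ge\tfrac12\binom{n}{2}$, and then take $n=\Theta(r^2\log r)$ large enough that $|\cI|\ge r$, so that $\max_i\alpha(G_i)=O(r\log r)$ and the common vertex set has size $O(r^2\log r)$. Plugging this packing into the construction above gives $s_r(K_3)\le C' r^2\log r$, and the Corollary then follows on combining this with the matching lower bound $s_r(K_3)=\Omega(r^2\log r)$ of~\cite{Fox}. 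The main obstacle is the combinatorial construction itself — in particular designing the auxiliary gadgets so that every colour class really is forced to inherit the small-independence-number property of the packing graphs (so that the pigeonhole step bites), and so that $v_0$ becomes genuinely necessary ($G-v_0\nrightarrow(K_3)_r$) while keeping $\deg_G(v_0)=O(n)$. A secondary technical point is that \refT{thm:main} yields only an approximate decomposition of $K_n$ (it covers $(1-\eps)\binom{n}{2}$ edges rather than all of $K_n$), so one must check that the construction tolerates the missing $\eps\binom{n}{2}$ edges; it does, since it only needs sufficiently many edge-disjoint triangle-free subgraphs with small independence number covering a positive fraction of $K_n$, which is also why the $\eps$-slack — and the dependence of $C$ on $\eps$ — is immaterial here.
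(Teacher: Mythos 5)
Your proposal takes essentially the same route as the paper: apply \refT{thm:main} with $\eps=\tfrac12$ to produce $r$ edge-disjoint triangle-free graphs on $N_r=\Theta(r^2\log r)$ vertices each with $\alpha(G_i)<N_r/r$, feed this into the Fox--Grinshpun--Liebenau--Person--Szab\'o reduction, and combine the resulting upper bound $s_r(K_3)=O(r^2\log r)$ with their matching lower bound. The one simplification you should make is that the ``main obstacle'' you worry about --- designing the auxiliary gadgets so that each colour class on $W=N_G(v_0)$ is forced to have small independence number, and arranging $G-v_0\nrightarrow(K_3)_r$ --- is already done in~\cite{Fox}: the paper simply invokes~\cite[Theorem~1.5 and Lemma~4.1]{Fox} (with $n=N_r$, $k=2$) as a black box, so there is no new gadget construction to carry out, and the deduction is a two-line citation rather than a sketch of a BEL-type argument.
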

Using a reformulation of~$s_r(K_3)$ from~\cite{Fox}, \refT{thm:srK3} follows easily from our main packing result. 
Indeed, applying \refT{thm:main} with~$\eps=1/2$, say, 
it is routine to see that there is a constant~$A>0$ such that the following holds for each~$r \ge 2$:
there exists a collection of edge-disjoint triangle-free graphs~$G_1, \ldots, G_r \subseteq K_{N_r}$ 
on~$N_r:= \floor{A r^2 \log r}$ vertices with independence number~$\alpha(G_i) < N_r/r$ 
(as~$N_r \ge n_0$, $D \sqrt{N_r/\log N_r} \ge r$ and $C \sqrt{N_r \log N_r} < N_r/r$ all hold for~$A=A(n_0,C,D)$ large enough). 
By~Theorem~1.5 and Lemma~4.1 in~\cite{Fox} (with~$n=N_r$ and~$k=2$) this immediately implies $s_r(K_3) \le N_r$, 
establishing \refT{thm:srK3}. 

Note that the above deduction of \refT{thm:srK3} did \emph{not} use $\sum_{i \in \cI} e(G_i)\ge(1-\eps)\binom{n}{2}$, 
i.e., that the nearly optimal~$R(3,t)$ graphs $(G_i)_{i \in \cI}$ approximately decompose the edge-set of~$K_n$. 
It would be interesting to find applications (e.g., in Ramsey theory or extremal combinatorics) 
where this natural packing property is useful.

\subsection{Main tool: pseudo-random triangle-free subgraphs}\label{sec:intro:main:tool}
The $R(3,t)$-proofs of Kim and Bohman both in fact construct a 
triangle-free graph~$G \subseteq K_n$ with pseudo-random properties 
(see also~\cite{K3maximal,Wolfovitz2011,BK2013,FPGM2013}). 
Our third theorem extends their intriguing results to host graphs~$H \subseteq K_n$ which are far from complete, 
by showing that one can again construct a triangle-free subgraph~$G \subseteq H$ with pseudo-random properties.  
Here the crux is that \refT{thmiteration} holds under very weak assumptions,\footnote{Note that \refT{thmiteration} does \emph{not} require the host graph~$H$ to be approximately degree or codegree regular. 
Furthermore, even if~$G \subseteq H$ was a random subgraph with edge-probability~$\rho$, 
then by standard calculations we would only expect the edge-estimate~\eqref{eq:thmiteration:edge} 
to hold for vertex-sets $A,B\subseteq V(H)$ where the number of edges~$e_H(A,B)$ is reasonably large 
(see \refR{rem:prop:const} and Footnote~\ref{fn:prop:const} on page~\pageref{fn:prop:const}, which also indicate that 
the constant~$C$ in \refT{thmiteration} has the correct dependence on $\gamma,\delta,\beta$).}  
and that~$G$ resembles a random subgraph of~$H$ with edge-probability~$\rho=\Theta(\sqrt{(\log n)/n})$. 
\begin{theorem}\label{thmiteration}
There exist $\beta_0,D_0>0$ such that, 
for all $\gamma,\delta \in (0,1]$, $\beta\in (0,\beta_0)$ and $C\ge D_0/(\delta^2\sqrt{\beta}\gamma)$, 
the following holds for all $n\ge n_0(\gamma,\delta,\beta,C)$, with $\rho := \sqrt{\beta(\log n)/n}$. 
For any $n$-vertex graph~$H$, there exists a triangle-free subgraph~$G \subseteq H$ on the same vertex-set such that 
\begin{equation}\label{eq:thmiteration:edge}
e_G(A,B)=(1\pm\delta)\rho e_H(A,B) 
\end{equation}
for all disjoint vertex-sets $A,B\subseteq V(H)$ with $|A|=|B|=\lceil C\sqrt{n\log n}\rceil$ and $e_H(A,B) \ge \gamma |A||B|$. 
\end{theorem}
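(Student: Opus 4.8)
The plan is to construct $G$ by a semi-random ``nibble'' variant of the triangle-free process run inside $H$, mirroring Kim's strategy but tracking only the weak global quantities that the very general hypothesis on $H$ allows. First I would fix a small time increment and run the process in $t_{\max} = \Theta(\rho n / \log n) = \Theta(\sqrt{n/\log n})$ rounds; in each round I select a random-like set of new edges from the current set of ``open'' pairs (non-edges of the partial graph $G_s$ that do not close a triangle and still lie in $H$), keeping the expected number of edges added proportional to $\rho e_H(A,B)$ across every relevant pair of sets. The target is to show that at the final time, for every admissible pair $(A,B)$ with $|A|=|B|=\lceil C\sqrt{n\log n}\rceil$ and $e_H(A,B)\ge\gamma|A||B|$, the quantity $e_{G}(A,B)$ has concentrated around $\rho\, e_H(A,B)$ to within a $(1\pm\delta)$ factor, and triangle-freeness is maintained deterministically by only ever adding open edges.

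The key steps, in order, are: (1) set up the continuous-time parametrisation $s\in[0,1]$ and the nominal trajectories — roughly, an edge of $H$ survives as open until time $s$ with ``probability'' $q(s) = e^{-s^2/2}$-type, and the number of open pairs inside $(A,B)$ should track $e_H(A,B)\,q(s)^{\Theta(1)}$ up to lower-order corrections (the exact exponents are the standard ones from the triangle-free process analysis, rescaled by $\rho$); (2) define the nibble: in the round for time-slice $[s,s+\mathrm{d}s]$, include each currently-open edge independently with a tiny probability so that a $\Theta(\mathrm{d}s)$-fraction is tentatively added, then delete tentatively-added edges that would create a triangle with another tentatively-added edge (this is the source of the main error term, quadratically small in $\mathrm{d}s$ but accumulating over $\Theta(1/\mathrm{d}s)$ rounds); (3) run a martingale/supermartingale concentration argument (Freedman- or Azuma-type, with a bad-event stopping time) to show every tracked variable stays within its error tube; the union bound is over $\binom{n}{\lceil C\sqrt{n\log n}\rceil}^2 \le e^{O(\sqrt{n}\log^{3/2} n)}$ pairs $(A,B)$, so each variable needs failure probability $e^{-\omega(\sqrt{n}\log^{3/2}n)}$, which forces the choice $C \gtrsim 1/(\delta^2\sqrt{\beta}\gamma)$ and dictates the round length; (4) at $s=1$, collect $G := G_1$ and verify~\eqref{eq:thmiteration:edge} by adding up the per-round contributions and absorbing the accumulated quadratic errors into the slack $\delta$.

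The main obstacle is step~(3) under the stated weak hypotheses: since $H$ is \emph{not} assumed degree- or codegree-regular, I cannot track local vertex-degrees in $G_s$ the way the classical analysis does, so I must instead show that the \emph{only} variables needing control are the bilinear counts $e_{G_s}(A,B)$ and the corresponding ``open-pair'' counts $o_s(A,B)$, and that the evolution of $o_s(A,B)$ closes up to errors that are uniformly small \emph{without} regularity — the danger being that an irregular $H$ could concentrate triangles so as to deplete open pairs inside some particular $(A,B)$ faster than the nominal rate. The resolution is the hypothesis $e_H(A,B)\ge\gamma|A||B|$ together with $|A|=\lceil C\sqrt{n\log n}\rceil$ being large enough that every such pair contains $\Omega(\gamma\, n \log n)$ edges of $H$, which is polynomially larger than the $\rho^{-1}=\Theta(\sqrt{n/\log n})$ scale of fluctuations per round; this ``large enough to see averaging'' threshold is exactly where $\delta^2\sqrt{\beta}\gamma$ enters the lower bound on $C$. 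I expect the bookkeeping of the quadratic nibble errors over all rounds — showing they sum to $o(\delta\rho e_H(A,B))$ — to be the longest routine part, while the conceptual crux is isolating the self-contained family of bilinear observables that makes the regularity-free martingale argument go through.
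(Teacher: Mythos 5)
Your high-level architecture — run a R\"odl-nibble variant of the triangle-free process inside~$H$, track the evolution of the open-pair counts on admissible $(A,B)$, and close out with round-by-round concentration against a union bound over $\binom{n}{s}^2$ set-pairs — matches the paper's approach and correctly identifies where the constraint $C \gtrsim 1/(\delta^2\sqrt{\beta}\gamma)$ comes from. But your stated resolution of the central obstacle does not work, and it is precisely here that the paper introduces its key ideas.

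You claim that because $H$ is not degree- or codegree-regular you ``cannot track local vertex-degrees,'' and that the fix is to make the system close up over the bilinear observables $e_{G_s}(A,B)$ and $o_s(A,B)$ alone. That is not a closed system: the expected one-step decrease of $o_s(A,B)$ is $\sum_{f\in o_s(A,B)} \bigl(1-\P(f\text{ survives})\bigr)$, and the per-edge survival probability depends on the \emph{mixed codegree} $|Y_f|$ (the number of open edges that would close $f$ if selected), which is a local quantity you have declined to track. Without any handle on these local variables the drift of $o_s(A,B)$ is simply not computable, so the supermartingale argument in your step~(3) cannot even be set up. The paper in fact \emph{does} track a family of degree- and codegree-like variables ($|N_{O_i}(v)|$, $|X_{uv}(i)|$, $|Y_{uv}(i)|$, $|Z_{uv}(i)|$), but crucially only via \emph{one-sided upper bounds}, which hold uniformly with no regularity hypothesis and are all that the analysis requires.

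The second, and larger, missing ingredient is the stabilization mechanism. Even with the one-sided bound $|Y_f(i)|\le 2 q_i\pi_i\sqrt{n}$, the survival probability $(1-p)^{|Y_f(i)|}$ varies from edge to edge in an irregular host, and there is no matching lower bound on $|Y_f(i)|$, so one cannot pin down the drift of $o_s(A,B)$ from below. The paper's fix is to \emph{artificially close extra edges} via an additional independent Bernoulli randomization $S_{i+1}$ with edge-dependent probability chosen so that
\[
\P(e\notin C_{i+1}\mid O_i,E_i)=(1-p)^{2q_i(\pi_i+\sqrt{\sigma})\sqrt n}
\]
holds \emph{exactly} for every open edge, regardless of the value of $|Y_e(i)|$. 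This equalization is what makes the per-round expected decrease of $o_s(A,B)$ deterministic given the history, which is the whole point: it converts ``$H$ irregular'' from an obstacle into a non-issue. Without something playing this role, the ``closes up to errors that are uniformly small'' assertion in your sketch is precisely the unproved step. (Secondary discrepancies: the paper uses $I=\lceil n^\beta\rceil$ rounds with step-probability $p=\sigma/\sqrt n$ where $\sigma=(\log n)^{-2}$, not $\Theta(\sqrt{n/\log n})$ rounds; and it applies McDiarmid-type bounded-difference inequalities and a Chernoff-type upper-tail bound for combinatorial sums step by step, rather than a Freedman/Azuma martingale over the whole process — but these are matters of implementation, not of substance.)
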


Our proof uses a semi-random variant of the triangle-free process to construct~$G \subseteq H$, 
extending and simplifying Kim's approach for the complete case~$H = K_n$ 
(see Sections~\ref{sec:nibble}--\ref{sec:proof} and \refT{prop} for the details). 
In particular, besides handling the difficulties arising due to incomplete host graphs~$H \subseteq K_n$ 
(by, e.g., exploiting a `stabilization mechanism' to keep various parameters under control), 
the major technical difference lies in the way we analyze the properties of all large vertex-sets 
(by, e.g., focusing on bipartite subgraphs, applying a concentration inequality of Warnke~\cite{APUT}, 
and showing concentration in~\eqref{eq:thmiteration:edge} instead of just~$e_G(A,B) \ge 1$). 
Together with some streamlining of Kim's arguments 
(by, e.g., using fewer variables, applying convenient bounded differences inequalities, and some changes to the semi-random construction), 
this leads to a shorter and hopefully more accessible proof even in the complete case~$H=K_n$. 
As a by-product, we also obtain a randomized polynomial-time algorithm which 
constructs~$G \subseteq H$ efficiently (see \refR{rem:prop:poly}).

\refT{thmiteration} will be the main tool for establishing our main packing result \refT{thm:main}.  
Let us briefly sketch the  argument (deferring the details to \refS{sec:iterate}). 
The idea is to sequentially choose the triangle-free subgraphs $G_{i} \subseteq H_i := K_n \setminus \bigcup_{0 \le j < i}G_j$ via \refT{thmiteration} with $\delta \in (0,1)$, 
using the pseudo-random edge-estimate~\eqref{eq:thmiteration:edge} to inductively control the number of remaining edges (between large sets) in~$H_i$ as 
\begin{equation}\label{eq:iteration:edge}
e_{H_i}(A,B) =  (1 - (1 \pm \delta)\rho)^i \cdot |A| |B| \qquad \text{for all disjoint $A,B\subseteq V(H)$ of size~$s:=\ceil{C \sqrt{n \log n}}$},
\end{equation}
stopping when the right hand side of~\eqref{eq:iteration:edge} drops below~$\eps |A||B|$ after $I=\Theta(\log(1/\eps)/\rho) = \Theta(\sqrt{n/\log n})$ steps. 
A double counting argument will then show that the leftover  graph~$H_I$ contains at most~$\eps \binom{n}{2}$ edges, 
so that~$\sum_{0 \le i < I}e(G_i) = e(K_n \setminus H_I) \ge (1-\eps)\binom{n}{2}$. 
Furthermore, $e_{G_i}(A,B) = (1 \pm \delta) \rho e_{H_i}(A,B)>0$ implies $\alpha(G_i) < 2s = O(\sqrt{n \log n})$, 
completing this rough proof sketch of \refT{thm:main} (assuming \refT{thmiteration}).

We believe that variants of Theorems~\ref{thm:main} and~\ref{thmiteration} also hold for many other forbidden graphs 
(using semi-random variants of the $H$-free process~\cite{OT2001,BK2010,WarnkeK42014,WarnkeCl2014,Picollelli2014});  
we hope to return to this topic in a future work.

\subsection{Organization of the paper}\label{sec:org}
The remainder of this paper is organized as follows.
In \refS{sec:iterate} we use \refT{thmiteration} to state and prove some extensions of our main packing result \refT{thm:main}.  
In \refS{sec:nibble} we introduce a semi-random  variation of the triangle-free process 
and state our main result for this R\"{o}dl nibble type construction 
(that implies our main tool \refT{thmiteration}, see \refS{sec:mainnibble}), 
which is then subsequently proved in \refS{sec:proof}.

\subsection{Further results}\label{sec:iterate}

Our methods allow us to extend \refT{thm:main} to $R(3,t)$-packings of graphs which are far from complete. 
Our fourth theorem shows that if~$H \subseteq K_n$ only satisfies certain uniformity conditions on its edge distribution 
(that resemble a weak form of pseudo-randomness, see~\eqref{eq:maintheorem:ass} below), 
then we can still  approximately decompose~$H$ into a packing of nearly optimal Ramsey~$R(3,t)$~graphs 
(again by an efficient randomized algorithm). 
\begin{theorem}\label{maintheorem}
For all $\eps,\xi,C_0 >0$ there exist $n_0,C_1,D>0$ such the following holds for all~$n \ge n_0$.
If $H$ is an $n$-vertex graph satisfying 
\begin{equation}\label{eq:maintheorem:ass}
\min_{\substack{\text{disjoint $A,B\subseteq V(H)$:}\\
	|A|=|B| = \ceil{C_0\sqrt{n\log n}}}}\frac{e_H(A,B)}{|A||B|}\ge\xi,
\end{equation}
then there is an edge-disjoint collection $(G_i)_{i \in \cI}$ of 
$|\cI| = \ceil{D \sqrt{n/\log n}}$ triangle-free subgraphs $G_i \subseteq H$ 
with~$V(G_i) = V(H)$, 
$\max_{i \in \cI}\alpha(G_i)\le C_1\sqrt{n\log n}$ and 
$\sum_{i \in \cI} e(G_i)\ge(1-\eps) e(H)$.
\end{theorem}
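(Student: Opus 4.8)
The plan is to prove \refT{maintheorem} by iterating \refT{thmiteration}, making rigorous the sketch given (for the special case $H=K_n$) right after the statement of \refT{thmiteration}. It suffices to treat $\eps\in(0,1)$ (for $\eps\ge1$ apply the result with $\eps$ replaced by $1/2$), and we may assume $\xi\le1$ since otherwise~\eqref{eq:maintheorem:ass} is unsatisfiable. The constants are then fixed in the following order. Set $\delta:=1/2$ and $\eps'':=\eps\xi\in(0,1)$; pick $\beta:=\beta_0/2\in(0,\beta_0)$ and put $D:=\log(1/\eps'')/\bigl((1-\delta)\sqrt\beta\bigr)$, so that with $\rho:=\sqrt{\beta(\log n)/n}$ and $I:=\ceil{D\sqrt{n/\log n}}$ we have $\rho I\ge \log(1/\eps'')/(1-\delta)$ and, for $n$ large, $\rho I\le D\sqrt\beta+1$. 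Next choose $\gamma\in(0,1]$ small enough (depending only on $\eps,\xi$) that $\bigl(1-(1+\delta)\rho\bigr)^{I}\xi\ge\gamma$ for all large $n$; this is possible because the left-hand side is at least $\xi e^{-2(1+\delta)\rho I}\ge\xi e^{-2(1+\delta)(D\sqrt\beta+1)}>0$. Finally set $C^*:=\max\{C_0,\,D_0/(\delta^2\sqrt\beta\gamma)\}$, take $n_0\ge n_0(\gamma,\delta,\beta,C^*)$ from \refT{thmiteration} (and large enough for the elementary estimates below), and write $s:=\ceil{C^*\sqrt{n\log n}}$. We will output $\cI:=\{0,\dots,I-1\}$, so $|\cI|=\ceil{D\sqrt{n/\log n}}$, and prove the theorem with $C_1:=3C^*$.

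The first observation is that~\eqref{eq:maintheorem:ass} propagates upward in scale: for any disjoint $A,B\subseteq V(H)$ with $|A|=|B|=s\ge\ceil{C_0\sqrt{n\log n}}$, averaging $e_H(A',B')$ over uniformly random $\ceil{C_0\sqrt{n\log n}}$-subsets $A'\subseteq A$, $B'\subseteq B$ gives $e_H(A,B)\ge\xi|A||B|$. Now set $H_0:=H$ and, for $0\le i<I$, let $G_i\subseteq H_i$ be the triangle-free subgraph supplied by \refT{thmiteration} applied to the $n$-vertex graph $H_i$ with parameters $\gamma,\delta,\beta,C^*$, and put $H_{i+1}:=H_i\setminus G_i$. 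By induction on $i$ one verifies the invariant
\[
\bigl(1-(1+\delta)\rho\bigr)^{i}e_H(A,B)\le e_{H_i}(A,B)\le\bigl(1-(1-\delta)\rho\bigr)^{i}e_H(A,B)
\]
for all disjoint $A,B$ of size $s$: the base case is trivial, and for $i<I$ the left inequality together with the scale-propagation bound gives $e_{H_i}(A,B)\ge(1-(1+\delta)\rho)^{i}\xi|A||B|\ge\gamma|A||B|$, so \refT{thmiteration} yields $e_{G_i}(A,B)=(1\pm\delta)\rho\,e_{H_i}(A,B)$ and hence $e_{H_{i+1}}(A,B)=(1-(1\pm\delta)\rho)e_{H_i}(A,B)$, which combined with the inductive hypothesis gives the bound for $i+1$ (here $n$ is large enough that $(1+\delta)\rho<1$).

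It remains to harvest the conclusions. The graphs $G_i$ are edge-disjoint by construction. For each $i<I$ and each disjoint pair $A,B$ of size $s$ we have $e_{G_i}(A,B)=(1\pm\delta)\rho\,e_{H_i}(A,B)\ge(1-\delta)\rho\gamma s^2>0$, so no independent set of $G_i$ has $2s$ or more vertices; thus $\alpha(G_i)<2s\le 3C^*\sqrt{n\log n}=C_1\sqrt{n\log n}$ for $n\ge n_0$. For the edge count, telescoping gives $\sum_{0\le i<I}e(G_i)=e(H)-e(H_I)$. By the upper bound in the invariant at $i=I$, together with $\rho I\ge\log(1/\eps'')/(1-\delta)$ and $1-(1-\delta)\rho\le e^{-(1-\delta)\rho}$, we get $e_{H_I}(A,B)\le e^{-(1-\delta)\rho I}|A||B|\le\eps'' s^2$ for all disjoint $A,B$ of size $s$. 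A routine double count over ordered disjoint pairs of $s$-subsets (each edge of $H_I$ lies in $2\binom{n-2}{s-1}\binom{n-1-s}{s-1}$ of the $\binom{n}{s}\binom{n-s}{s}$ such pairs, and $\binom{n}{s}\binom{n-s}{s}=\binom{n-2}{s-1}\binom{n-1-s}{s-1}\cdot n(n-1)/s^2$) then yields $e(H_I)\le\eps''\binom{n}{2}$, while the same computation applied to the lower bound $e_H(A,B)\ge\xi s^2$ gives $e(H)\ge\xi\binom{n}{2}$. Hence $e(H_I)\le(\eps''/\xi)e(H)=\eps\,e(H)$ and $\sum_{0\le i<I}e(G_i)\ge(1-\eps)e(H)$, as required. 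Since \refT{thmiteration} is algorithmic (\refR{rem:prop:poly}) and $I=O(\sqrt{n/\log n})$, iterating it yields a randomized polynomial-time construction.

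The analytic heart of the matter is entirely contained in \refT{thmiteration}, which we take for granted; what remains is essentially bookkeeping. The only two points needing care are (i) reconciling the scale $C_0$ of the hypothesis with the scale $C^*$ at which \refT{thmiteration} provides control — handled by the monotone scale-propagation above — and (ii) the tension between wanting $\gamma$ small enough that the geometrically decaying density $e_{H_i}(A,B)/(|A||B|)$ stays above $\gamma$ for all of the first $I$ steps (so that every $G_i$ is nonempty and the independence-number bound survives) and wanting $I$ large enough that this density has dropped below $\eps\xi$ by step $I$; these are simultaneously achievable precisely because $\rho I$ is bounded, which is what dictates the order in which the constants must be fixed.
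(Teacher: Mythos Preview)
Your proof is correct and follows essentially the same approach as the paper: iterate \refT{thmiteration} while maintaining the invariant $e_{H_i}(A,B)=(1-(1\pm\delta)\rho)^i e_H(A,B)$, use scale-propagation to reconcile $C_0$ with $C^*$, and finish with a double count. The only cosmetic difference is in the final edge-count step: the paper double counts $e_{H\setminus H_I}(A,B)\ge(1-\eps)e_H(A,B)$ directly (avoiding the detour through $\binom{n}{2}$ and the extra factor $\xi$ in your $\eps''$), which yields slightly better constants but is otherwise equivalent.
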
 
Note that the case $H=K_n$ and $\xi=C_0=1$ implies \refT{thm:main}. 
Furthermore, the case $H=G_{n,p}$, $\xi = p/2$ and~$C_0=1$  routinely implies 
the following sparse analogue of \refT{thm:main} for binomial random graphs~$G_{n,p}$. 
\begin{corollary}\label{maincorollary}
For any $p \in (0,1]$ and $\eps >0$ there exist $C,D>0$ such that, with probability at least $1-o(1)$, 
the following event holds: 
there exists an edge-disjoint collection $(G_i)_{i \in \cI}$ of 
$|\cI| = \ceil{D \sqrt{n/\log n}}$ triangle-free graphs $G_i \subseteq G_{n,p}$ 
on $n$~vertices with   
$\max_{i \in \cI}\alpha(G_i)\le C\sqrt{n\log n}$ and 
$\sum_{i \in \cI} e(G_i) = (1 \pm \eps)p\binom{n}{2}$. 
\end{corollary}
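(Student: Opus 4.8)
The plan is to derive \refC{maincorollary} from \refT{maintheorem} by checking that the binomial random graph~$G_{n,p}$ typically satisfies the edge-uniformity hypothesis~\eqref{eq:maintheorem:ass} with parameters $\xi = p/2$ and $C_0 = 1$, and then translating the conclusion $\sum_{i} e(G_i) \ge (1-\eps') e(G_{n,p})$ into the asserted estimate $\sum_i e(G_i) = (1\pm\eps)p\binom{n}{2}$ via a further Chernoff bound on~$e(G_{n,p})$.

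First I would fix $p \in (0,1]$ and $\eps>0$, set $\eps' := \eps/2$, $\xi := p/2$, $C_0 := 1$, and let $n_0,C_1,D$ be the constants furnished by \refT{maintheorem} for these parameters; I will take $C := C_1$ and keep the same~$D$. Write $s := \ceil{C_0\sqrt{n\log n}} = \ceil{\sqrt{n\log n}}$. For a fixed pair of disjoint sets $A,B\subseteq[n]$ with $|A|=|B|=s$ we have $e_{G_{n,p}}(A,B)\sim \Bin(s^2,p)$, so $\E e_{G_{n,p}}(A,B) = p s^2 \ge pn\log n$, and a standard Chernoff bound gives
\[
\P\bigpar{e_{G_{n,p}}(A,B) < \tfrac12 p s^2} \le \exp\bigpar{-ps^2/8} \le \exp\bigpar{-pn\log n/8}.
\]
Since there are at most $\binom{n}{s}^2 \le n^{2s} = \exp(2s\log n)$ such pairs and $2s\log n = O(\sqrt{n}\,(\log n)^{3/2})$, which is dominated by $pn\log n/8$ for any fixed $p>0$, a union bound shows that with probability $1-o(1)$ every such pair satisfies $e_{G_{n,p}}(A,B) \ge \xi|A||B|$, i.e., $G_{n,p}$ satisfies~\eqref{eq:maintheorem:ass}.

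Next, on this likely event I would apply \refT{maintheorem} with $H=G_{n,p}$ (valid once $n\ge n_0$), obtaining an edge-disjoint collection $(G_i)_{i\in\cI}$ of $|\cI|=\ceil{D\sqrt{n/\log n}}$ triangle-free subgraphs $G_i\subseteq G_{n,p}$ on $n$ vertices with $\max_{i\in\cI}\alpha(G_i)\le C_1\sqrt{n\log n} = C\sqrt{n\log n}$ and $\sum_{i\in\cI} e(G_i)\ge(1-\eps')e(G_{n,p})$. It remains to pass from $e(G_{n,p})$ to $p\binom{n}{2}$: another Chernoff bound gives $e(G_{n,p}) = (1\pm\eps/4)\,p\binom{n}{2}$ with probability $1-o(1)$, and combining this with the trivial bound $\sum_{i\in\cI}e(G_i)\le e(G_{n,p})$ (edge-disjointness) and the choice $\eps'=\eps/2$ yields $(1-\eps)p\binom{n}{2}\le \sum_{i\in\cI}e(G_i)\le(1+\eps)p\binom{n}{2}$. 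Intersecting the two $o(1)$-probability failure events then completes the proof.

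I do not expect a genuine obstacle here, since \refT{maintheorem} does all the real work and the deduction is `routine' precisely because the Chernoff-plus-union-bound estimate for~\eqref{eq:maintheorem:ass} has exponential room to spare (exponents $\Theta(pn\log n)$ against $\Theta(\sqrt{n}\,(\log n)^{3/2})$). The only mild points of care are bookkeeping the constant dependence --- $C = C_1$ and $D$ come from \refT{maintheorem} applied with $\xi=p/2$, so they legitimately depend on~$p$ and~$\eps$, as does~$n_0$ --- and choosing the slack $\eps' = \eps/2$ (and shaving an extra $\eps/4$ off the edge count) \emph{before} invoking \refT{maintheorem}.
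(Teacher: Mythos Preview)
Your proposal is correct and follows precisely the route the paper indicates: it states that the case $H=G_{n,p}$, $\xi=p/2$, $C_0=1$ of \refT{maintheorem} ``routinely implies'' \refC{maincorollary}, and your Chernoff-plus-union-bound verification of~\eqref{eq:maintheorem:ass} together with the edge-count Chernoff bound is exactly that routine deduction.
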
 
We conjecture that \refC{maincorollary} (with $|\cI| = \ceil{D p\sqrt{n/\log n}}$ and constants~$C,D>0$ depending only on~$\eps$) holds for 
much sparser random graphs~$G_{n,p}$ with edge-probabilities of form~$p=p(n) \ge n^{-1/2+o(1)}$, say.\footnote{The range of~$p=p(n)$ in this conjecture is essentially best possible, since it is well-known that typically $\alpha(G_{n,p}) \gg \sqrt{n \log n}$ for $p \ll \sqrt{(\log n)/n}$. 
Furthermore, although we have not checked all details, it seems that our proofs can be modified 
to verify the conjecture for~$p \ge n^{-\delta}$, where~$\delta>0$ is some small constant;  
so the main question is whether $p \ge n^{-1/2+o(1)}$ suffices.}

We conclude the introduction with the short proof of \refT{maintheorem}, 
which proceeds by sequentially choosing the graphs~$G_{i} \subseteq H \setminus \bigcup_{0 \le j < i}G_j$ via \refT{thmiteration} 
(generalizing the argument sketched in \refS{sec:intro:main:tool}). 
The reader mainly interested in the proof of \refT{thmiteration}
may perhaps wish to skip straight to Section~\ref{sec:nibble}. 
\begin{proof}[Proof of \refT{maintheorem} (assuming \refT{thmiteration})] 
We may assume~$\eps < 1$ (as decreasing~$\eps$ gives a stronger conclusion). 
For concreteness, set $\delta := 1/4$, $\gamma:=\eps^2\xi$, $\beta := \beta_0/2$ 
and $C:=\max\{C_0, \: D_0/(\delta^2\sqrt{\beta}\gamma)\}$, 
where $\beta_0,D_0$ are defined as in Theorem~\ref{thmiteration}. 
Let $C_1 := 3C$, $s:= \ceil{C\sqrt{n\log n}}$, $\rho := \sqrt{\beta(\log n)/n}$, and $I:=\lceil\log(1/\eps)/(\rho(1-\delta))\rceil$.

Define $H_0 := H$. 
Let $\fS$ denote the set of all pairs $(A,B)$ of disjoint vertex-sets $A,B \subseteq V(H)$ with $|A|=|B|=s$.
Combining a `handshaking lemma' like double counting argument with the assumed lower bound~\eqref{eq:maintheorem:ass}, 
writing $t := \ceil{C_0\sqrt{n\log n}}$ it follows that 
\begin{equation}\label{eq:handshake}
\frac{e_{H_0}(A,B)}{|A||B|}
=\frac{\sum_{\tilde{A}\subseteq A,\tilde{B}\subseteq B: \: |\tilde{A}|=|\tilde{B}|=t}e_H(\tilde{A},\tilde{B})}{s^2 \cdot \binom{s-1}{t-1}\binom{s-1}{t-1}}
\ge \frac{\binom{s}{t}\binom{s}{t} \cdot \xi t^2}{s^2 \binom{s-1}{t-1}\binom{s-1}{t-1}}
=\xi 
\qquad \text{for all $(A,B) \in \fS$}.
\end{equation}

The plan is to sequentially choose the graphs $(G_i)_{0 \le i < I}$ with $G_i \subseteq H_i$
such that, setting $H_{i+1}:=H_i \setminus G_i$ (which ensures that all the~$G_i$ are edge-disjoint), 
for all $0 \le i \le I$ we inductively have 
\begin{equation}\label{eq:invariant}
\frac{e_{H_i}(A,B)}{e_{H_0}(A,B)}\in \Bigl[\bigpar{1-(1+\delta)\rho}^i,\: \bigpar{1-(1-\delta)\rho}^i \Bigr] \qquad \text{for all $(A,B) \in \fS$}.
\end{equation}
Turning to the details, note that inequality~\eqref{eq:invariant} holds trivially for~$i=0$. 
Given~$H_i$ with $0 \le i \le I-1$ satisfying~\eqref{eq:invariant}, 
by combining the definition of~$I$ with $(1+2\delta)/(1-\delta) = 2$ and~\eqref{eq:handshake}  
it follows for $n \ge n_0(\beta)$ that, say, 
\begin{equation}\label{eq:lowerbound}
\frac{e_{H_i}(A,B)}{|A||B|} \ge e^{-(1+2\delta) \rho (I-1)} \cdot \frac{e_{H_0}(A,B)}{|A||B|} \ge \eps^2 \cdot \xi = \gamma  \qquad \text{for all $(A,B) \in \fS$}.
\end{equation}
Using \refT{thmiteration}, for $n\ge n_0(\eps,\xi,\delta,\beta,C)$ we can thus find 
a triangle-free subgraph~$G_i \subseteq H_i$ with $e_{G_i}(A,B)=(1\pm\delta)\rho e_{H_i}(A,B) > 0$ for all $(A,B) \in \fS$. 
Hence $\alpha(G_i) < 2s \le 3C\sqrt{n\log n}$, say. 
Furthermore, noting $e_{H_{i+1}}(A,B)=e_{H_i}(A,B)-e_{G_i}(A,B)$, it is immediate that 
$H_{i+1}=H_i \setminus G_i$ maintains~\eqref{eq:invariant}.

Finally, for the number of edges of $\bigcup_{0 \le i < I}G_i = H_0 \setminus H_I$,  
by~\eqref{eq:invariant} and definition of~$I$ it follows that 
\begin{equation}\label{eq:lowerbound2}
e_{H_0\setminus H_I}(A,B) 
\ge \bigpar{1-e^{-(1-\delta)\rho I}} \cdot e_{H_0}(A,B) \ge (1-\eps) e_{H_0}(A,B) 
\qquad \text{for all $(A,B) \in \fS$}.
\end{equation}
Using a double counting argument similar to~\eqref{eq:handshake}, 
in view of~\eqref{eq:lowerbound2} and $H_0=H$ we infer 
\[
e(H_0\setminus H_I)
=\frac{\sum_{(A,B) \in \fS} e_{H_0\setminus H_I}(A,B)}{2\binom{n-2}{s-1}\binom{n-2-(s-1)}{s-1}}
\ge (1-\eps) \cdot \frac{\sum_{(A,B) \in \fS} e_{H}(A,B)}{2\binom{n-2}{s-1}\binom{n-2-(s-1)}{s-1}} = (1-\eps) e(H) ,
\] 
completing the proof of $\sum_{0 \le i < I} e(G_i) = e(H_0\setminus H_I) \ge(1-\eps)e(H)$. 
\end{proof}

\section{The nibble: semi-random triangle-free process}\label{sec:nibble}
The remainder of this paper is devoted to the proof of our main tool \refT{thmiteration}. 
Given an $n$-vertex graph~$H$ with vertex-set $V=V(H)$ and edge-set~$E(H)$, 
inspired by Kim~\cite{Kim} our strategy is to incrementally construct the triangle-free edge-set of $G \subseteq H$ 
using a semi-random variation of the triangle-free process (adding large chunks of random-like edges in each step; see also Footnotes~\ref{fn:tfprocess}--\ref{fn:semirandom} on page~\ref{fn:tfprocess}). 
One key difference to~\cite{Kim,Bohman} is that our approach only uses edges from the host graph~$H$ (and not the complete graph~$K_n$). 
In particular, deferring the details to \refS{sec:nibbledetails}, the rough plan of our R\"{o}dl nibble type construction 
is to step-by-step build up a `random' set of edges~$\vE_i \subseteq E(H)$ and a triangle-free subset~$\vF_i \subseteq \vE_i$;  
we also keep track of a set
\begin{equation}\label{def:Oi:subset}
\vO_i \subseteq \set{ e \in E(H) \setminus \vE_i: \:  \text{$e$ does not form a triangle with any two edges of $\vE_i$}}
\end{equation}
of `open' edges that can still be added. 
The idea of each step is to choose a small number of random edges~$\Gamma_{i+1} \subseteq \vO_i$ 
so that only a few new triangles are created in~$\vE_{i+1} = \vE_i \cup \Gamma_{i+1}$. 
This allows us to find an edge-subset $\Gamma'_{i+1} \subseteq \Gamma_{i+1}$, with $|\Gamma'_{i+1}| \approx |\Gamma_{i+1}|$,    
such that $\vF_{i+1} = \vF_i \cup \Gamma'_{i+1}$ remains triangle-free.\footnote{For the construction of~$\vF_{i+1}$ it might seem overly complicated 
to define~$\vO_i$ with respect to~$\vE_i$ (and not~$\vF_{i}$). 
However, this slightly wasteful definition actually simplifies the analysis: 
e.g., for the purpose of tracking various auxiliary variables, 
it intuitively is easier to understand the effect of adding the random edges~$\Gamma_{i+1}$ 
(rather than some subset $\Gamma'_{i+1} \subseteq \Gamma_{i+1}$).
Using an inclusion in~\eqref{def:Oi:subset} might also seem overly complicated, 
but it again simplifies the analysis: by removing some extra edges it actually 
becomes easier to prove concentration (see the `stabilization mechanism' discussion 
around~\eqref{def:pe:cons} and \refL{lemma1}).} 
After 
\begin{align}
\label{def:I}
I &:= \bigceil{n^\beta}
\end{align}
such alteration-method based steps, we eventually obtain a triangle-free graph~$G=(V,\vF_I) \subseteq H$, 
which intuitively ought to be `random enough' to resemble 
(many features of) a random subgraph of~$H$.

\subsection{Details of the nibble construction}\label{sec:nibbledetails}
Turning to the details of the nibble construction, 
consistent with~\eqref{def:Oi:subset} we start with 
\begin{gather}\label{def:start}
\vO_0  := E(H)
 \quad \text{ and } \quad 
\vE_0 := \vF_0 := \Gamma_0 := \emptyset .
\end{gather}
In step $i+1 \ge 1$ we then set 
\begin{align}\label{def:Ei}
\vE_{i+1} &:= \vE_i\cup\Gamma_{i+1} ,
\end{align}
where each edge $e \in \vO_i$ is included in $\Gamma_{i+1}$, independently, with probability 
\begin{align}\label{def:p}
p& :=\csig/\sqrt{n} . 
\end{align}
(The definition of the deterministic parameter $\csig \ll 1$ is deferred to~\eqref{def:sigma} in \refS{sec:def}.)  
Note that~$\vF_i\cup\Gamma_{i+1}$ is not necessarily triangle-free, 
since two or three edges of a triangle could enter via~$\Gamma_{i+1} \subseteq \vO_i$ 
(one edge is not enough by~\eqref{def:Oi:subset} and $\vF_i \subseteq \vE_i$), 
i.e., via the following set of `bad' pairs and triples of $\Gamma_{i+1}$--edges: 
\begin{align}
\cB_{i+1} &:= \bigset{\set{wu,wv}\subseteq \Gamma_{i+1}: \: uv\in\vF_i, \: |\set{u,v,w}|=3 } \: \cup \: \bigset{\set{uv,vw,wu}\subseteq \Gamma_{i+1}: \: |\set{u,v,w}|=3 },
\end{align}
where we write~$xy=\{x,y\}$ for brevity. To avoid triangles in~$T_{i+1}$ by alteration, we thus 
take~$\cD_{i+1}$ to be a maximal collection of pairwise edge-disjoint elements of~$\cB_{i+1}$ 
(say the first one in lexicographic order to resolve ties; any other deterministic choice also works, see \refR{rem:polystep} and \refS{sec:P5}), 
and then set\footnote{The standard alteration approach of removing one edge from each element of~$\cB_{i+1}$ seems harder to analyze: 
e.g., removing the edges of a maximal edge-disjoint collection~$\cD_{i+1} \subseteq \cB_{i+1}$ greatly facilitates the technical calculations in \refS{sec:P5}.}
\begin{align}\label{def:Ti}
\vF_{i+1} &:= \vF_{i}\cup\bigpar{ \Gamma_{i+1} \setminus E(\cD_{i+1})} ,
\end{align}
where we write $E(\cD_{i+1}) := \bigcup_{\alpha\in\cD_{i+1}} \alpha$ for the set of edges in the pairs and triples of~$\cD_{i+1}$.  
Note that~$\vF_{i+1}$ is indeed triangle-free by maximality of~$\cD_{i+1} \subseteq \cB_{i+1}$. 
Turning to the open edge-set~$\vO_{i+1} \subseteq \vO_i \setminus \Gamma_{i+1}$, by~\eqref{def:Oi:subset} 
the set $\cC^1_{i+1} \cup \cC^2_{i+1}$ of newly `closed' edges (that form a triangle with some two edges of $E_{i+1}$) is given~by 
\begin{align}
\cC^1_{i+1} &:= \set{f \in \vO_i : \: \cY_f(i) \cap \Gamma_{i+1} \neq \emptyset} , \\
\label{def:Yuv}
\cY_{uv}(i) &:= \set{uw\in \vO_i: \:  vw\in\vE_i} \: \cup \: \set{vw\in\vO_i : \: uw\in \vE_i},\\
\cC^2_{i+1} &:= \set{uv\in\vO_i : \: \text{there is $w$ s.t. } uw\in\Gamma_{i+1}, \: vw\in\Gamma_{i+1}}.
\end{align}
Mimicking a technical idea of Alon, Kim and Spencer~\cite{AlonKimSpencer1995}, 
we intuitively increase the set of closed edges (via the random set~$S_{i+1}$ below) in order to 
add a `stabilization mechanism' to our construction,\footnote{Kim uses a different stabilization mechanism in~\cite[Section~5.1]{Kim}: 
instead of introducing the random sets~$S_{j}$, 
he deterministically modifies the underlying graphs in each step 
(by temporarily adding some extra edges and vertices), 
mimicking an earlier `regularization' idea from~\cite{Kahn1993}. 
We find our randomized approach more elegant, 
and easier to implement algorithmically.\label{fn:diff}}
and define 
\begin{align}
\label{def:Ci}
\cC_{i+1} &:= \cC^1_{i+1} \cup S_{i+1}, \\
\label{def:Oi}
\vO_{i+1} &:= \vO_i \setminus \bigpar{\Gamma_{i+1}\cup\cC_{i+1} \cup \cC^2_{i+1}} ,
\end{align}
where each edge $e \in \vO_i$ is included in $S_{i+1}$, independently, with `stabilization' probability 
\begin{align}\label{def:pe}
\hp_{e,i} :=1-(1-p)^{\max\set{2\cq_i(\cpi_i+\sqrt{\csig})\sqrt{n}-|\cY_e(i)|, \: 0}}.
\end{align}
(The definition of the deterministic parameters $\cq_i,\cpi_i$ is deferred to~\eqref{def:qi}--\eqref{def:pii} in \refS{sec:def}.) 
Roughly put, the main point of the technical definitions of $S_{i+1}$ and $\hp_{e,i}$ will be that all the conditional probabilities 
\begin{align}\label{def:pe:cons}
\P(e\not\in\cC_{i+1} \mid O_i,E_i) 
= \P(e \not\in \cC^1_{i+1} \mid O_i,E_i) \cdot (1- \hp_{e,i}) = (1-p)^{\max\set{2\cq_i(\cpi_i+\sqrt{\csig})\sqrt{n}, \: |\cY_e(i)|}} 
\end{align}
can inductively be made equal and thus independent of the history 
(by only maintaining a weak upper bound on~$\max_e |\cY_e(i)|$; see~\eqref{def:Pi}, \eqref{ob1} and \refL{lemma1}), 
which in turn helps to keep various error terms under control.

\begin{remark}\label{rem:polystep}
Note that each step of our nibble construction requires only randomized polynomial time 
(since we can easily find a \emph{maximal} edge-disjoint collection~$\cD_{i+1} \subseteq \cB_{i+1}$ by a deterministic greedy algorithm). 
\end{remark}

\subsection{Pseudo-random intuition: trajectory equations}\label{sec:pseudo}
In this informal section we give a heuristic explanation of the differential equation 
that predicts the behaviour of~$(\vO_i,\vE_i)$ for $0 \le i \le I \approx n^{\beta}$. 
Inspired by~\cite{K3maximal,Kim}, our main non-rigorous ansatz 
is that the edge-sets~$(\vO_i,\vE_i)$ should resemble properties of 
a random subgraph of~$H$ with two types of edges, where 
\begin{equation}\label{eq:heur}
\P(e \in O_i) \approx q_i \quad \text{ and } \quad \P(e \in E_i) \approx \cpi_i/\sqrt{n} 
\end{equation}
are approximately independent. 
We now derive properties of~$q_i,\pi_i$ that are consistent with this ansatz. 
For example, combining $E_{i+1} = E_i \cup \Gamma_{i+1}$ with the 
random construction of $\Gamma_{i+1} \subseteq O_i$, we expect to have 
\begin{equation}\label{eq:heur:Ei}
\P(e \in E_{i+1})-\P(e \in E_{i}) 
= \P(e \in \Gamma_{i+1} \mid e \in O_i)\P(e \in O_i) \approx p \cdot \cq_i = \csig \cq_i/\sqrt{n} ,
\end{equation}
which together with~\eqref{eq:heur} and $E_0 = \emptyset$ suggests that 
\begin{equation}\label{eq:heur:pii:value}
\pi_{i+1} - \pi_i \approx  \csig q_i \quad \text{ and } \quad \pi_0 \approx 0 .
\end{equation}
Furthermore, with lots of hand-waving, by~\eqref{def:Oi} we intuitively have $\vO_i \setminus \vO_{i+1} = \Gamma_{i+1}\cup \cC_{i+1} \cup \cC^2_{i+1} \approx \cC_{i+1}$ (since each closed edge in $\cC^2_{i+1}$ requires the presence of at least two random edges from~$\Gamma_{i+1} \subseteq \vO_i$). 
As~\eqref{eq:heur} suggests $\E |Y_e(i)| \lesssim 2 q_i \pi_i \sqrt{n}$, 
by the stabilization mechanism~\eqref{def:pe:cons} and $p=\sigma/\sqrt{n}$ 
we thus loosely expect that 
\[
\P(e \in O_{i+1} \mid O_i,E_i) \approx \P(e \not\in\cC_{i+1} \mid O_i,E_i) = (1-p)^{2\cq_i(\cpi_i+\sqrt{\csig})\sqrt{n}} \approx  1-2\csig\cq_i \cpi_i \quad \text{ for $e \in O_i$},
\]
where we bluntly ignored the~$\sqrt{\csig}$ in the exponent.
Similar to~\eqref{eq:heur:Ei}, using~\eqref{eq:heur} we thus ought to have 
\begin{equation}\label{eq:heur:qi2}
q_{i+1}-q_i \approx \P(e \in O_{i+1})-\P(e \in O_{i}) \approx - 2\csig\cq_i\cpi_i \cdot \P(e \in O_{i}) \approx -2 \sigma q_i^2 \pi_i.
\end{equation} 
To extract the behaviour of~$\pi_I$ from~\eqref{eq:heur:pii:value} and~\eqref{eq:heur:qi2}, 
we further assume that $\pi_i \approx \cPsi(i \csig)$ holds for some smooth function $\cPsi(x)$, where~$\csig \ll 1$ is tiny. 
Using Taylor series, 
in view of~\eqref{eq:heur:pii:value} and $O_0 = E(H)$ this suggests that 
\begin{equation}\label{eq:heur:qi3}
q_i \approx \cPsi'(i \csig) \quad \text{ and } \quad q_0 \approx 1 .
\end{equation}
Together with~\eqref{eq:heur:qi2} and the initial values from~\eqref{eq:heur:pii:value} and~\eqref{eq:heur:qi3}, 
this leads to the second order differential equation $\cPsi''(x) = -2\cPsi'(x)^2\cPsi(x)$ with~$\cPsi'(0)=1$ and~$\cPsi(0)=0$, 
which in turn reduces to the simple ODE 
\begin{equation}\label{eq:heur:qi}
\cPsi'(x)=  e^{-\cPsi^2(x)} \quad \text{and} \quad  \cPsi(0)=0 .
\end{equation}
Noting the implicit solution $x = \int_{0}^{\cPsi(x)}e^{t^2}dt$, 
it now is easy to derive that $\cPsi(x) \approx \sqrt{\log x}$ as $x \to \infty$  
(see, e.g., the proof of~\eqref{ai2} in \refApp{sec:apx}). 
Since $I \approx n^{\beta}$ is sufficiency large compared to $\sigma$ (which will be of form~$\sigma = (\log n)^{-\Theta(1)}$, see~\eqref{def:sigma} in \refS{sec:def}), 
this makes it plausible that 
\begin{equation}\label{eq:heur:pi_i}
\pi_I \approx \cPsi(I \csig) \approx \sqrt{\log(I \csig)} \approx \sqrt{\beta \log n} .
\end{equation}

Finally, since by construction we expect $|E_{i+1} \setminus E_i| \approx |T_{i+1} \setminus T_i|$ to hold for all $0 \le i < I$, 
the edge-sets~$E_I$ and~$T_I$ ought to share many properties. 
Together with~\eqref{eq:heur} and~\eqref{eq:heur:pi_i} 
this intuitively suggests
\begin{equation}\label{eq:heur:Ti}
\P(e \in \vF_I) \approx \P(e \in E_I) \approx \sqrt{\beta (\log n)/n} ,
\end{equation}
making the pseudo-random edge-estimate~\eqref{eq:thmiteration:edge} 
plausible for~$G=(V,\vF_I)$ with $\vF_I \subseteq E_I \subseteq E(H)$.

\subsection{Definitions and parameters}\label{sec:def}
In this section we formally define several variables 
and parameters used in our analysis of the nibble construction. 
We start with two standard notions from graph theory: 
for any edge-subset $S \subseteq \binom{V}{2}$ we write 
\begin{align}
S(A,B) &:= \set{ab \in S: \: a \in A, \: b \in B}, \\
N_S(v) &:= \set{w \in V: \: vw\in S} ,
\end{align}
where $A,B \subseteq V$ 
are vertex-disjoint. 
For all pairs of distinct vertices~$u,v \in V$ 
we then define 
\begin{align}
\label{def:Xuv}
\cX_{uv}(i) &:=  N_{\vO_i}(u)\cap N_{\vO_i}(v), \\
\label{def:Zuv}
\cZ_{uv}(i) &:=  N_{\vE_i}(u)\cap N_{\vE_i}(v) ,
\end{align}
where $|\cX_{uv}(i)|$ and $|\cZ_{uv}(i)|$ intuitively correspond 
to an `open codegree' and the usual codegree, respectively 
(note that~$|\cY_{uv}(i)|$ corresponds to a `mixed codegree', see~\eqref{def:Yuv}).

Guided by \refS{sec:pseudo}, we define $\cPsi(x)$ as the unique solution to the differential equation 
$\cPsi'(x)=\exp(-\cPsi^2(x))$ and $\cPsi(0)=0$ from~\eqref{eq:heur:qi}. 
With the heuristics~\eqref{eq:heur} in mind, we introduce the parameters 
\begin{align}
\label{def:sigma}
\csig &:=(\log n)^{-2}, \\
\label{def:qi}
\cq_i &:= \cPsi'(i\csig) = e^{-\cPsi^2(i\csig)} ,\\ 
\label{def:pii}
\cpi_i &:=\csig+\sum_{j=0}^{i-1}\csig\cq_j=\cpi_{i-1}+\csig\cq_{i-1} \indic{i \ge 1},
\end{align}
making~\eqref{eq:heur:pii:value} and \eqref{eq:heur:qi3} rigorous 
(starting with $\cpi_0=\csig > 0$ leads to cleaner formulae later on). 
With foresight, for $i \le I$ we also introduce the `relative error' parameter  
\begin{align}
\label{def:tau}
\ctau_i &:= 1-\frac{\delta\cpi_i}{2\cpi_I}=\ctau_{i-1}-\frac{\delta\csig\cq_{i-1}}{2\cpi_I}\indic{i \ge 1} ,
\end{align}
which slowly degrades from $\tau_0 = 1- o(\delta)$ to $\tau_I = 1-\delta/2$. 

With an eye on \refT{thmiteration}, for concreteness we introduce the absolute constants\footnote{To make this paper easier to read, we have made no attempt to optimize the constants $D_0,\beta_0$ in~\eqref{def:D0beta0}.} 
\begin{align}\label{def:D0beta0}
D_0 := 108 \quad \text{ and } \quad \beta_0 := 1/14 ,
\end{align}
as well as the set-sizes (with $s_0 \ll s$) and idealized edge-probability 
\begin{align}\label{def:ss0}
s & := \bigceil{C\sqrt{n\log n}}, \qquad s_0 := \bigfloor{\sigma^4\cq_I^2 s},  \quad \text{ and } \quad \rho := \sqrt{\beta(\log n)/n},
\end{align}
and, recalling $O_0=E(H)$, the collection of `relevant' pairs of large vertex-sets 
\begin{align}\label{def:fSs}
\fS_s &:= \{(A,B) : \: \text{disjoint $A,B \subseteq V$ with $|A|=|B|=s$ and $|\vO_0(A,B)|\ge\gamma |A||B|$}\}.
\end{align}

\subsection{Main nibble result: pseudo-random properties}\label{sec:mainnibble}
In this section we state our main nibble result \refT{prop}, 
which implies our main tool \refT{thmiteration} and 
establishes various pseudo-random properties of $(\vO_i,\vE_i,\vF_i,\Gamma_{i})_{0\le i\le I}$. 
The following event is of core interest: 
\begin{align}
\label{def:cTI}
\cT_I & := \bigset{\text{$|\vF_I(A,B)|= (1\pm\delta)\rho |\vO_0(A,B)|$ for all~$(A,B) \in \fS_s$}} .
\end{align}
Indeed, it implies the conclusion of \refT{thmiteration} with~$G=(V,\vF_I)$ since the edge-set $\vF_I \subseteq E_I \subseteq E(H)=\vO_0$ is triangle-free. 
To get a handle on~$\cT_I$, 
in view of \refS{sec:nibbledetails} it is natural 
that we also require some control over the other edge-sets~$(\vE_i,\vO_i,\Gamma_{i})_{0 \le i \le I}$. 
To this end we introduce the `good' events 
\begin{align}
\cG_i :=\cN_{i} \cap \cP_{i} \cap \cQ^+_i\cap\cQ_i
\quad \text{ and } \quad
\cG_{\le i} :=\bigcap_{0\le j\le i}\cG_j ,
\end{align}
where the following auxiliary events encapsulate various pseudo-random properties: 
\begin{align}
\label{def:Ni}
\cN_{i} & := \bigset{\text{$|N_{\vO_i}(v)|\le \cq_i n$ and $|N_{\Gamma_{i}}(v)| \le 2\sigma \cq_{i-1}\sqrt{n}$ for all~$v \in V$}}, \\
\label{def:Pi}
\cP_{i} & := \bigset{\text{$|\cX_{uv}(i)|\le\cq_i^2n$, $|\cY_{uv}(i)|\le 2\cq_i\pi_i\sqrt{n}$, and $|\cZ_{uv}(i)|\le i(\log n)^9$ for all~$u,v \in V$ with~$u \neq v$}}, \\
\label{def:Qip}
\cQ^+_i & := \bigset{\text{$|\vO_i(A,B)|\le \cq_i|A||B|$ for all disjoint $A,B \subseteq V$ with $|A|,|B| \ge s_0$}}, \\
\label{def:Qi}
\cQ_i & := \bigset{\text{$\ctau_i\cq_i|\vO_0(A,B)|\le|\vO_i(A,B)|\le \cq_i|\vO_0(A,B)|$ for all~$(A,B) \in \fS_s$}}. 
\end{align}
In words, the above events give bounds for 
degree-like variables~($\cN_{i}$),  
codegree-like variables~($\cP_{i}$), 
and the number of open edges~($\cQ^+_i$ and~$\cQ_i$). 
A subtle but important point is that $\cN_{i}$, $\cP_{i}$ and $\cQ^+_i$ 
only guarantee one-sided concentration, i.e., ensure upper bounds but no 
matching lower bounds 
(which can fail badly, for example, $|\cY_{uv}(i)|=0$ holds when~$uv \in E_i$). 
Merely $\cQ_i$ guarantees two-sided concentration, 
which is harder to prove, but  
crucial for establishing the edge-estimate from~$\cT_I$ 
(see the heuristic below \refT{prop}).

With $\ctau_i \approx 1$ and $\vO_0 =E(H) \subseteq E(K_n)$ in mind, most of the bounds in~\eqref{def:cTI} and~\eqref{def:Ni}--\eqref{def:Qi} 
can easily be guessed by the pseudo-random heuristics~\eqref{eq:heur} and~\eqref{eq:heur:Ti} from \refS{sec:pseudo} 
(the~$|N_{\Gamma_{i}}(v)|$-bound is one exception: based on $\E|N_{\Gamma_{i}}(v)|  = p \cdot \E |N_{\vO_{i-1}}(v)|$, 
it contains an extra factor of~$2$ to avoid additive error terms; 
another exception is the $|\cZ_{uv}(i)|$-bound: 
it relaxes the prediction $\E|\cZ_{uv}(i)| \le \pi^2_i = O(\log n)$ for technical reasons).

Inspecting~\eqref{def:Ni}--\eqref{def:Qi} in the special case~$i=0$, it is 
not difficult to see that the good event~$\cG_{0}=\cG_{\le 0}$ always holds  
(by combining~$\cq_0=1 \ge \tau_0$ and~$\csig,\cq_{-1},\cpi_0 \ge 0$ with~$\vE_0 =\vF_0= \Gamma_0=\emptyset$). 

\begin{remark}\label{rem:event0}
The event $\cG_{0}$ holds deterministically for any $n$-vertex host graph~$H$.
\end{remark}

Our main nibble result (which is at the heart of this paper) states that, 
under fairly natural constraints, the pseudo-random events $\cT_I$ and $\cG_{\le I}$ 
both hold with very high probability.   
Recall that $I \approx n^{\beta}$. 
\begin{theorem}[Main nibble result]\label{prop}%
For all $\gamma,\delta \in (0,1]$, $\beta \in (0,\beta_0)$ and $C \ge D_0 /(\delta^2\sqrt{\beta}\gamma)$, 
the following holds for~$n \ge n_0(\gamma,\delta,\beta,C)$: 
we have $\P(\cT_I\cap\cG_{\le I})\ge 1-n^{-\omega(1)}$ for any $n$-vertex host graph~$H$. 
\end{theorem}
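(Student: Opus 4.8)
The plan is to run the nibble construction for $I = \lceil n^\beta \rceil$ steps and show by induction on~$i$ that the good event $\cG_{\le i}$ holds with probability $1 - n^{-\omega(1)}$, and then to deduce the edge-estimate event $\cT_I$ from $\cG_{\le I}$ at the very end. The key structural device is the stabilization mechanism: conditioning on the history $(\cO_j, \vE_j)_{j \le i}$, the definitions~\eqref{def:pe}--\eqref{def:pe:cons} are engineered so that, on the good event, each open edge $e \in \cO_i$ survives into $\cO_{i+1}$ with a probability that does \emph{not} depend on the precise value of $|\cY_e(i)|$ but only on the deterministic quantity $2\cq_i(\cpi_i+\sqrt\csig)\sqrt n$. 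This makes the one-step transition of each relevant edge-indicator essentially a product of (conditionally) independent Bernoulli-type events, which is what lets the differential-equation heuristics of \refS{sec:pseudo} be turned into rigorous concentration statements.

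The inductive step would proceed as follows. Fix $i$ and assume $\cG_{\le i}$ holds. First, handle the degree/codegree-type bounds defining $\cN_{i+1}$ and $\cP_{i+1}$: each of $|N_{\cO_{i+1}}(v)|$, $|N_{\Gamma_{i+1}}(v)|$, $|\cX_{uv}(i+1)|$, $|\cY_{uv}(i+1)|$, $|\cZ_{uv}(i+1)|$ changes in step $i+1$ by a sum of indicator variables that are independent (or nearly so) given the history; on $\cG_{\le i}$ the conditional expectation of the new value is close to the predicted trajectory value (e.g.\ $\cq_{i+1}n$, $2\sigma\cq_i\sqrt n$, $\cq_{i+1}^2 n$, etc.), so an appropriate Chernoff/Azuma bound or the bounded-differences inequality of~\cite{APUT} gives the required upper bound up to a loss of $n^{-\omega(1)}$; a union bound over the $O(n^2)$ vertices/pairs is affordable since the failure probabilities are superpolynomially small. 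Second, handle the open-edge counts: for $\cQ^+_{i+1}$ and $\cQ_{i+1}$ one tracks $|\cO_{i+1}(A,B)| = \sum_{e \in \cO_i(A,B)} \indic{e \in \cO_{i+1}}$; here the stabilization mechanism~\eqref{def:pe:cons} makes $\E[|\cO_{i+1}(A,B)| \mid \text{history}]$ exactly $(1-p)^{2\cq_i(\cpi_i+\sqrt\csig)\sqrt n} \cdot |\cO_i(A,B)| \approx (1 - 2\sigma\cq_i\cpi_i)|\cO_i(A,B)|$, matching $\cq_{i+1}|\cO_0(A,B)|/\cq_i$ by~\eqref{def:qi}; concentration for all $(A,B) \in \fS_s$ simultaneously (a set of size $\binom{n}{s}^2$ with $s = \Theta(\sqrt{n\log n})$) again comes from the concentration inequality of~\cite{APUT}, whose tail is strong enough to beat the $\exp(O(s\log n)) = \exp(O(\sqrt n (\log n)^{3/2}))$ entropy term precisely because $p = \sigma/\sqrt n$ makes the relevant variance of order $\rho|\cO_0(A,B)| \gg s\log n$. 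The lower bound in $\cQ_i$ is where one needs the two-sided concentration, and it is the reason the $\cQ^+_i$ (upper-bound-only) event is separated out: $\cQ^+_i$ controls error terms for the harder two-sided argument. Finally, from $\cG_{\le I}$ one deduces $\cT_I$ by the counting argument sketched after \refT{prop}: $|\vF_I(A,B)| = |\vE_I(A,B)| + O(\text{alteration losses})$, $|\vE_I(A,B)| = \sum_{i<I}|\Gamma_{i+1}(A,B)|$, and $\E|\Gamma_{i+1}(A,B) \mid \text{history}| = p|\cO_i(A,B)| = (\sigma/\sqrt n)\cq_i|\cO_0(A,B)|(1\pm o(1))$ on $\cQ_i$, which sums (using $\sum_{i<I}\sigma\cq_i = \cpi_I - \cpi_0 \approx \sqrt{\beta\log n}$, hence $p\cdot\sum \approx \rho$) to $(1\pm\delta)\rho|\cO_0(A,B)|$; the $\delta$ slack is exactly what the relative-error parameter $\ctau_i$ in~\eqref{def:tau} was built to absorb, and the alteration losses are shown negligible using the $\cB_{i+1}$/$\cD_{i+1}$ bounds, which in turn follow from $\cP_i$ and $\cN_i$.

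The main obstacle is the simultaneous two-sided concentration of $|\cO_i(A,B)|$ (and then $|\vF_I(A,B)|$) over \emph{all} of the exponentially many pairs $(A,B) \in \fS_s$: a naive martingale/Azuma bound gives per-pair failure probability only $\exp(-\Theta(\text{variance}))$, and one must verify that this is $n^{-\omega(1)}$ even after multiplying by $|\fS_s| = \exp(\Theta(\sqrt n (\log n)^{3/2}))$, which forces the lower bound $C \ge D_0/(\delta^2\sqrt\beta\gamma)$ and the choice of exponents $\beta_0 = 1/14$, $\sigma = (\log n)^{-2}$ in~\eqref{def:sigma}--\eqref{def:D0beta0}; getting the constants to work requires the sharper-than-Azuma inequality of~\cite{APUT} together with the observation that conditioning on the good events keeps the per-step increments bounded by $O(\sigma\cq_{i-1}\sqrt n)$ rather than by the trivial $O(\sqrt n)$. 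A secondary difficulty is propagating the stabilization identity~\eqref{def:pe:cons} cleanly through the induction — one must check that $\hp_{e,i}$ is a legitimate probability (i.e.\ the exponent is nonnegative) exactly when $|\cY_e(i)| \le 2\cq_i(\cpi_i+\sqrt\csig)\sqrt n$, which is implied by $\cP_i$, so the whole argument is a single interlocking induction where each event in $\cG_{i+1}$ is proved assuming all of $\cG_{\le i}$.
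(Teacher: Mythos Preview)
Your high-level plan matches the paper's architecture: reduce \refT{prop} to an inductive lemma of the form $\P(\neg\cG_{i+1}\mid\cG_{\le i})\le n^{-\omega(1)}$ together with $\P(\neg\cT_I\cap\cG_{\le I})\le n^{-\omega(1)}$, handle $\cN_{i+1},\cP_{i+1}$ by bounded-differences/Chernoff plus a union bound over $O(n^2)$ vertices or pairs, and recover $\cT_I$ at the end by summing $|\Gamma_{i+1}(A,B)|$ and controlling the alteration loss.

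However, your treatment of $\cQ^+_{i+1}\cap\cQ_{i+1}$ skips the main technical obstacle. The variable $|\vO_{i+1}(A,B)|$ is a function of the independent indicators $(\indic{e\in\Gamma_{i+1}},\indic{e\in\cW_{i+1}})_{e\in\vO_i}$, but flipping a single edge $e=vw$ into $\Gamma_{i+1}$ can close every edge of $\vO_i(A,B)$ whose other endpoint lies in $N_{\vE_i}(w)\cap(A\cup B)$, and nothing in $\cG_{\le i}$ bounds this quantity by anything useful (it can be as large as $|A\cup B|$). The resulting Lipschitz constant is $\Theta(s)$, the variance parameter $\lambda$ in \refT{thm:BDI} blows up, and the per-pair tail is nowhere near the $n^{-\omega(s)}$ needed to beat $|\fS_s|\le n^{2s}$. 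The inequality from~\cite{APUT} you invoke (\refT{thm:UT} here) is for sums of overlap-bounded indicator products and does not apply to $|\vO_{i+1}(A,B)|$ directly. The paper's fix (\refS{sec:P3P4}) is to split off the small set $W_1=\{w:|N_{\vE_i}(w)\cap(A\cup B)|\ge z\}$ of high-degree vertices (with $z=\csig^4\cq_i^2|A|$), replace $\cC^1_{i+1}$ by a truncated version $\chC^1_{i+1}$ ignoring triangles through $W_1$, apply bounded differences to $|\vO_i(A,B)\setminus\chC_{i+1}|$ (now with edge-effect $c_e\le 2z$), and then bound the discrepancy $X_4=|\vO_i(A,B)\cap(\cC_{i+1}\cup\cC^2_{i+1})\setminus(\chC_{i+1}\cup\chC^2_{i+1})|$ \emph{deterministically} via the codegree bound $|\cZ_{uv}(i)|\le I(\log n)^9$ and \refL{lem:overlap}. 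This is precisely where $\cQ^+_i$ (for sets as small as $s_0\ll s$) and the otherwise unmotivated $|\cZ_{uv}|$-bound in $\cP_i$ are used. Your plan contains no analogue of this device, and without it the inductive step for $\cQ_{i+1}$ does not go through.

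One small correction: $\hp_{e,i}$ is always a legitimate probability because of the $\max\{\cdot,0\}$ in~\eqref{def:pe}; the role of the $|\cY_e(i)|$-bound in $\cP_i$ is rather to ensure that this $\max$ is not activated, so that the identity~\eqref{def:pe:cons} yields a history-independent value (\refL{lemma1}).
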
	
\begin{proof}[Proof of \refT{thmiteration}]\label{prf:thmiteration}
If the event~$\cT_I$ holds, then the triangle-free graph~$G:=(V,\vF_I)$ has the 
claimed properties by~\eqref{def:cTI}, $V=V(H)$ and $\vF_I \subseteq E_I \subseteq E(H) = \vO_0$, 
so \refT{prop} completes the proof. 
\end{proof}	
\begin{remark}\label{rem:prop:poly}
In view of $I = O(n^{\beta_0})$ and \refR{rem:polystep}, the nibble 
thus yields a randomized polynomial time algorithm  (with error probability $\le n^{-\omega(1)}$)
for constructing the triangle-free~$G \subseteq H$ from \refT{thmiteration}.  
\end{remark}
\begin{remark}\label{rem:prop:const}
The heuristic edge-estimate~\eqref{eq:heur:Ti} suggests that 
the constraint $C = \Omega(1/(\delta^2\sqrt{\beta}\gamma))$ from \refT{prop} is best possible: 
it would also arise if~$G=(V,\vF_I) \subseteq H$ was a random subgraph with edge-probability~$\rho$.\footnote{For all $(A,B) \in \fS_s$ the expected number of edges between~$A$ and~$B$ would then be at least $\lambda_{A,B} := \E |\vF_I(A,B)| = \rho |\vO_0(A,B)| \ge \rho \cdot \gamma s^2 \ge \sqrt{\beta} \gamma C \cdot s \log n$. 
Using a union bound and standard Chernoff bounds, the probability that~$\cT_I$ from~\eqref{def:cTI} fails would thus be at most $\sum_{(A,B) \in \fS_s} e^{-\Theta(\delta^2 \lambda_{A,B})} \le n^{2s-\Omega(\delta^2\sqrt{\beta} \gamma C s)} = o(1)$ for $C = \Omega(1/(\delta^2\sqrt{\beta}\gamma))$ large enough.\label{fn:prop:const}}
\end{remark}
We defer the proof of~\refT{prop} to \refS{sec:proof}, 
and now just outline a brief heuristic argument that illustrates how the event 
$\cG_{\le I} \subseteq \bigcap_{0\le i\le I}\cQ_i$ is 
instrumental for establishing the edge-estimate from~$\cT_I$ (which seems informative).
Similar to~\eqref{eq:heur:Ti}, in view of \refS{sec:nibbledetails} we expect that 
in each step only few edges are removed due to the creation of triangles, 
which intuitively suggests  
\[
|T_{i+1}(A,B) \setminus T_i| \approx |\vE_{i+1}(A,B) \setminus \vE_i| .
\]
Combining the construction of $\vE_{i+1}\setminus \vE_{i} = \Gamma_{i+1} \subseteq \vO_i$ 
with the event $\cQ_i$ and $\tau_i \approx 1$, 
we also expect that 
\[
|\vE_{i+1}(A,B) \setminus \vE_i| = |\Gamma_{i+1}(A,B)| \approx p \cdot |\vO_i(A,B)| \approx p \cdot q_i |\vO_0(A,B)| .
\]
Recalling~$p=\csig/\sqrt{n}$ and $\rho = \sqrt{\beta(\log n)/n}$, using the definition~\eqref{def:pii} of $\pi_I$ 
and the approximation~$\pi_I \approx \sqrt{\beta \log n}$ from~\eqref{eq:heur:pi_i} 
it now becomes plausible that 
\[
|T_{I}(A,B)| = \sum_{0 \le i < I} |T_{i+1}(A,B) \setminus T_i| 
\approx \frac{\sum_{0 \le i < I}\csig q_i}{\sqrt{n}} \cdot |\vO_0(A,B)| 
\approx \frac{\pi_I}{\sqrt{n}}\cdot |\vO_0(A,B)| 
\approx \rho |\vO_0(A,B)|, 
\]
as suggested by~$\cT_I$ (\refS{sec:P5} contains a rigorous version of this heuristic argument).

\subsection{Tools and auxiliary estimates}
In this preparatory section we gather, for later reference, some 
results that will be used throughout the proof of \refT{prop} 
(mostly probabilistic and combinatorial tools, and ending with some auxiliary estimates). 
On a first reading the reader may perhaps wish to skip straight to Section~\ref{sec:proof}.

We start with a convenient version of the bounded differences inequality~\cite{McDiarmid1989,McDiarmid1998,TBDI} for Bernoulli variables. 
Note that the upper tail estimate~\eqref{eq:BDImon} for decreasing functions does \emph{not} have an extra $Ct$ term in the exponent like~\eqref{eq:BDI}. 
Remarks~\ref{rem:BDI:LT}--\ref{rem:chernoff} are well-known, see, e.g., \cite[Theorem~2.3, 3.8, and~3.9]{McDiarmid1998} or \cite[Corollary~1.4]{TBDI}.   
Inequality~\eqref{eq:BDImon} can be deduced from the arguments in~\cite[Lemma~7.14]{McDiarmid1989}, but this monotone version does not seem to be widely known; 
in \refApp{sec:apx} we thus include a simple proof for completeness.   
\begin{theorem}\label{thm:BDI}
Let $(\xi_\alpha)_{\alpha \in \cI}$ be a finite family of independent random variables with~$\xi_\alpha \in \{0,1\}$.  
Let $f:\{0,1\}^{|\cI|} \to \RR$ be a function, and 
assume that there exist numbers $(c_\alpha)_{\alpha \in \cI}$ such that the 
following holds for all $z=(z_{\alpha})_{\alpha \in \cI} \in \{0,1\}^{|\cI|}$ and $z'=(z'_{\alpha})_{\alpha \in \cI} \in \{0,1\}^{|\cI|}$: 
$|f(z)-f(z')| \le c_{\beta}$ if $z_{\alpha} = z'_{\alpha}$ for all~$\alpha \neq \beta$. 
Define $X:= f\bigl((\xi_\alpha)_{\alpha \in \cI}\bigr)$ and $\lambda := \sum_{\alpha \in \cI}c_\alpha^2 \P(\xi_{\alpha}=1)$.
Then, for all $t \ge 0$, 
\begin{equation}\label{eq:BDImon}
\P(X\ge \E X  + t) \le\exp\biggpar{-\frac{t^2}{2\lambda}} 
\end{equation}
if the function $f$ is decreasing (i.e., that $f(z) \le f(z')$ whenever $z_{\alpha} \ge z'_{\alpha}$ for all $\alpha \in \cI$). 
\end{theorem}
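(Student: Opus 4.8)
The plan is to run the classical Doob martingale (Azuma--Hoeffding) argument underlying McDiarmid's inequality, but to exploit the monotonicity of~$f$ to replace Hoeffding's bound on the conditional moment generating function by a one-sided Bennett-type bound that has no linear correction term. Concretely, fix any ordering $\cI=\{1,\dots,N\}$, write $p_i:=\P(\xi_i=1)$, let $\cF_i:=\sigma(\xi_1,\dots,\xi_i)$ with $\cF_0$ trivial, and set $X_i:=\E[X\mid\cF_i]$, so that $X_0=\E X$, $X_N=X$ and $X-\E X=\sum_{i=1}^N D_i$ with $D_i:=X_i-X_{i-1}$. Since the~$\xi_\alpha$ are independent, conditionally on~$\cF_{i-1}$ the increment~$D_i$ is a function of~$\xi_i$ alone with conditional mean zero, so writing $a_i$ for its (random, $\cF_{i-1}$-measurable) value $[X_i]_{\xi_i=0}-[X_i]_{\xi_i=1}$ one gets $D_i=a_i(p_i-\xi_i)$. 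The bounded differences hypothesis, applied pointwise and then averaged over~$(\xi_j)_{j>i}$, gives $|a_i|\le c_i$, and the assumption that~$f$ is decreasing gives $a_i\ge 0$; hence $0\le a_i\le c_i$.

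The heart of the matter is the conditional moment generating function of~$D_i$. For $h\ge 0$ one computes, on~$\cF_{i-1}$,
\begin{equation*}
\E\bigl[e^{hD_i}\mid\cF_{i-1}\bigr]
= e^{h a_i p_i}\bigl(1-p_i+p_i e^{-h a_i}\bigr)
\le \exp\bigl(h a_i p_i + p_i(e^{-h a_i}-1)\bigr)
\le \exp\bigl(\tfrac12 h^2 a_i^2 p_i\bigr)
\le \exp\bigl(\tfrac12 h^2 c_i^2 p_i\bigr),
\end{equation*}
using $1+x\le e^x$ for the first inequality and the elementary estimate $e^{-u}\le 1-u+u^2/2$, valid for~$u=h a_i\ge 0$, for the second. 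This is the only place where decreasingness of~$f$ enters: it forces $a_i\ge 0$, so that~$h a_i\ge 0$ for~$h>0$ and the quadratic bound $e^{-u}-1+u\le u^2/2$ applies; for a non-monotone~$f$ one would only have $|a_i|\le c_i$, the exponent~$h a_i$ could be negative, and one is forced into the weaker two-sided estimate with the extra~$O(c\,t)$ term appearing in~\eqref{eq:BDI}.

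It then remains to iterate: by the tower property $\E[e^{h(X-\E X)}]=\E\bigl[e^{h\sum_{i<N}D_i}\,\E[e^{hD_N}\mid\cF_{N-1}]\bigr]\le e^{h^2 c_N^2 p_N/2}\,\E[e^{h\sum_{i<N}D_i}]$, and repeating yields $\E[e^{h(X-\E X)}]\le\exp(\tfrac12 h^2\lambda)$ with $\lambda=\sum_\alpha c_\alpha^2 p_\alpha$. Markov's inequality gives $\P(X\ge\E X+t)\le\exp(-ht+\tfrac12 h^2\lambda)$ for all $h\ge 0$, and optimizing at $h=t/\lambda$ (the degenerate case $\lambda=0$, where $X=\E X$ almost surely, being trivial) produces exactly~\eqref{eq:BDImon}. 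I expect the only subtle point to be the sign bookkeeping in the first two paragraphs — correctly aligning the one-sided Lipschitz structure coming from monotonicity with the sign of~$h$ so that the quadratic upper bound for~$e^{-u}$ is legitimately applicable — since this alignment is precisely what distinguishes~\eqref{eq:BDImon} from the standard bounded differences inequality. The argument is in any case essentially folklore, and can be extracted from~\cite[Lemma~7.14]{McDiarmid1989}.
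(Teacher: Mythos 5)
Your proof is correct and is essentially identical to the paper's: same Doob martingale decomposition, same observation that monotonicity pins down the sign of the increment's slope, same use of the quadratic bound $e^{-u}\le 1-u+u^2/2$ on the correct half-line, same tower-property iteration and optimization at $h=t/\lambda$. The only differences are cosmetic (you write the increment as $a_i(p_i-\xi_i)$ with $a_i\ge 0$ where the paper uses $D_i\in[-c_i,0]$ and $\Delta_i$).
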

\begin{remark}\label{rem:BDI:LT} 
Define $C:=\max_{\alpha \in \cI}c_\alpha$. 
If we drop the assumption that~$f$ is decreasing, then 
\begin{equation}\label{eq:BDI}
\P(X\le \E X - t) \le\exp\biggpar{-\frac{t^2}{2(\lambda+C t)}} .
\end{equation}
\end{remark}
\begin{remark}\label{rem:chernoff}
In the special case $X = \sum_{\alpha \in \cI}\xi_\alpha$ we have $C=c_\alpha=1$ and $\lambda=\E X$. 
Standard Chernoff bounds (or applying~\eqref{eq:BDImon}-\eqref{eq:BDI} to the decreasing function~$-X$) 
then 
show that 
in this case 
$\P(X\le \E X - t)$ and $\P(X\ge \E X + t)$ are at most the right hand side of~\eqref{eq:BDImon} and~\eqref{eq:BDI}, respectively. 
\end{remark}

For random variables with a special combinatorial form (based on the occurrence of events with `limited overlaps') 
we shall use the following Chernoff-type upper tail inequality, which is a convenient corollary of a more general result by Warnke~\cite[Theorem 9]{APUT}. 
Note that the exponent of~\eqref{eq:C} scales with~$1/C$. 
\begin{theorem}\label{thm:UT}
Let $(\xi_i)_{i \in \fS}$ be a finite family of independent random variables with~$\xi_i \in \{0,1\}$. 
Let $(Y_{\alpha})_{\alpha \in \cI}$ be a finite family of variables $Y_{\alpha} := \indic{\xi_i=1 \text{ for all } i \in \alpha}$ with $\sum_{\alpha \in \cI} \E Y_{\alpha} \le \mu$. 
Define $Z_C :=\max \sum_{\alpha \in \cJ} Y_{\alpha}$, where the maximum is taken over 
all $\cJ \subseteq \cI$ with $\max_{\beta \in \cJ}|\set{\alpha \in \cJ: \alpha \cap \beta \neq \emptyset}| \leq C$. 
Then, for all $C,t>0$, 
\begin{equation}\label{eq:C}
\P(Z_C \geq \mu +t) 
\le e^{-\mu/C} \cdot \biggpar{\frac{e\mu}{\mu+t}}^{(\mu+t)/C} 
\le \exp\biggpar{-\frac{t^2}{2C(\mu +t)}} .
\end{equation}
\end{theorem}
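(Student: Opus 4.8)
The plan is to derive \refT{thm:UT} as a direct specialization of Warnke's general upper-tail inequality~\cite[Theorem~9]{APUT}, and then to rewrite the resulting product-form bound in the stated Bernstein-type form by an elementary one-variable estimate. Before invoking that black box I would first ask whether a self-contained proof is feasible. The tempting idea is that any admissible family $\cJ\subseteq\cI$ has bounded overlap (each $\alpha$ meeting at most $C$ members of $\cJ$, itself included), so the overlap graph on $\cJ$ has small maximum degree and $\cJ$ splits into at most $\lceil C\rceil$ colour classes, within each of which the $Y_\alpha$ involve pairwise-disjoint blocks of the independent variables $(\xi_i)_{i\in\fS}$ and hence form a genuine sum of independent indicators amenable to an ordinary Chernoff bound. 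This fails as stated: $Z_C$ is a \emph{maximum} over the exponentially large family of admissible $\cJ$, so no single colouring can be fixed in advance, and a union bound over colourings is hopeless. Handling this maximum is precisely the content of~\cite[Theorem~9]{APUT}, so I would cite it rather than reprove it.

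Concretely, I would match the hypotheses of~\cite[Theorem~9]{APUT} to the present setting. The variables $Y_\alpha=\indic{\xi_i=1 \text{ for all } i\in\alpha}$ are increasing indicator functions of coordinate-subsets of the independent family $(\xi_i)_{i\in\fS}$, two of them being dependent exactly when the index sets $\alpha,\beta\subseteq\fS$ intersect; the quantity $Z_C$ here is the ``maximum over bounded-overlap subfamilies'' controlled by that theorem, and $\mu$ plays the role of the assumed bound on $\sum_{\alpha\in\cI}\E Y_\alpha$. Feeding these identifications into~\cite[Theorem~9]{APUT} should yield exactly the first inequality of~\eqref{eq:C},
\[
\P(Z_C\ge\mu+t)\le e^{-\mu/C}\cdot\biggpar{\frac{e\mu}{\mu+t}}^{(\mu+t)/C}.
\]
I expect this translation to be the main obstacle, modest though it is: one must make sure the overlap parameter in the cited theorem really is our $C$ (not $C+1$, and computed over the chosen subfamily $\cJ$ rather than over all of $\cI$), and that none of the general theorem's side conditions are violated by the specialization. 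There is no genuine mathematical difficulty, only the bookkeeping between two notations.

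For the second inequality of~\eqref{eq:C}, put $x:=t/\mu>0$, so that $\mu+t=\mu(1+x)$ and a short computation gives
\[
e^{-\mu/C}\cdot\biggpar{\frac{e\mu}{\mu+t}}^{(\mu+t)/C}=\exp\biggpar{-\tfrac{\mu}{C}\bigpar{(1+x)\log(1+x)-x}}.
\]
It then suffices to prove the elementary inequality $(1+x)\log(1+x)-x\ge x^2/(2(1+x))$ for all $x\ge0$, which follows from a two-derivative calculation: the difference of the two sides vanishes together with its first derivative at $x=0$ and has nonnegative second derivative $x(x+2)/(1+x)^3$ on $[0,\infty)$. Substituting back, $\tfrac{\mu}{C}\bigpar{(1+x)\log(1+x)-x}\ge\tfrac{\mu}{C}\cdot\tfrac{x^2}{2(1+x)}=\tfrac{t^2}{2C(\mu+t)}$, which gives the claimed bound $\exp\bigpar{-t^2/(2C(\mu+t))}$ and completes the proof.
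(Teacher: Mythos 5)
Your proposal matches the paper's approach exactly: the paper does not supply a proof of Theorem~\ref{thm:UT} beyond asserting it is a ``convenient corollary of a more general result by Warnke~\cite[Theorem 9]{APUT},'' and the passage from the product-form bound to the Bernstein-type bound is the standard elementary estimate you carry out. Your computation of the logarithm of the first bound, the substitution $x=t/\mu$, and the two-derivative verification of $(1+x)\log(1+x)-x\ge x^2/(2(1+x))$ (with $f(0)=f'(0)=0$ and $f''(x)=x(x+2)/(1+x)^3\ge 0$) are all correct, and your caution about checking that the overlap parameter in~\cite[Theorem~9]{APUT} lines up with $C$ as defined here is the right thing to flag.
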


The following simple combinatorial lemma formalizes the intuition 
that we expect $\sum_i|U_i| = O(|U|)$ whenever the subsets 
$U_i\subseteq U$ are nearly disjoint (i.e., have small pairwise intersections).
\begin{lemma}\label{lem:overlap}
Suppose that $(U_i)_{i\in \cI}$ is a family of subsets $U_i\subseteq U$ with $|U_i|\ge z > 0$ 
and $|U_i\cap U_j|\le y$ for all~$i\neq j$. 
Then $z\ge \sqrt{4|U|y}$ implies 
$|\cI|\le 2|U|/z$ and $\sum_{i\in \cI}|U_i|\le 2|U|$.  
\end{lemma}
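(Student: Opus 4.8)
The plan is to prove \refL{lem:overlap} by a standard inclusion-exclusion / double-counting argument on the union of the sets $U_i$, exploiting the fact that near-disjointness forces the sets to be few in number before they start overcrowding $U$. First I would estimate the size of the union from below using Bonferroni's inequality: since $\bigl|\bigcup_{i \in \cI} U_i\bigr| \ge \sum_{i \in \cI}|U_i| - \sum_{\{i,j\} \subseteq \cI}|U_i \cap U_j|$, and the left-hand side is at most $|U|$, I get
\begin{equation*}
|U| \ge \sum_{i\in\cI}|U_i| - \binom{|\cI|}{2} y \ge |\cI| z - \frac{|\cI|^2}{2} y .
\end{equation*}
This is the one real inequality in the proof; everything else is bookkeeping.

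Next I would extract the bound on $|\cI|$. Rearranging the displayed inequality gives $|\cI| z \le |U| + \tfrac12|\cI|^2 y$, which is not yet in the desired form, so instead I would argue by contradiction or split into cases: if $|\cI| \le 2|U|/z$ we are done with the first bound; otherwise $|\cI| > 2|U|/z$, and I would show this is incompatible with the hypothesis $z \ge \sqrt{4|U|y}$. Concretely, assuming $|\cI| \ge 2|U|/z$, the term $\tfrac12 |\cI|^2 y$ would need to absorb at least $|\cI| z - |U| \ge |\cI| z/2$; but $\tfrac12|\cI|^2 y \ge |\cI| z /2$ forces $|\cI| y \ge z$, i.e. $|\cI| \ge z/y \ge 4|U|/z$ (using $z^2 \ge 4|U|y$). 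Feeding this stronger lower bound back in and iterating — or, more cleanly, choosing the threshold carefully from the start — one drives $|\cI|$ to be impossibly large, contradicting $|\cI| \le |U|/z \cdot (\text{const})$ coming from $|U_i| \ge z$ and the $U_i$ being inside $U$ (note $\sum_i |U_i| \le |\cI| \cdot |U|$ is too weak; instead use that the union is at most $|U|$ together with the Bonferroni bound). I would streamline this into a single clean chain: set $m := |\cI|$ and observe $|U| \ge mz - \binom{m}{2}y \ge mz - m^2 y/2 \ge m(z - mz/4) = mz/4$ \emph{provided} $m \le z/y$, wait — the correct bookkeeping is that $\binom m 2 y \le m^2 y /2 \le mz/2 \cdot (my/z) $; the hypothesis $z^2 \ge 4|U|y$ and $mz \le |U| + \binom m2 y$ together yield $mz \le |U| + \tfrac{m}{2}\cdot\tfrac{z^2}{4|U|}\cdot\tfrac{m}{z}\cdot z$... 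I will organize this as: from $mz \le |U| + \tfrac12 m^2 y$ and $y \le z^2/(4|U|)$ get $mz \le |U| + \tfrac{m^2 z^2}{8|U|}$; writing $u := mz/|U|$ this is $u \le 1 + u^2/8$, whose relevant solution branch gives $u \le 2$ (the quadratic $u^2/8 - u + 1 \ge 0$ holds for $u \le 4-2\sqrt2 < 2$, so in particular $u < 2$ once we know $u$ lies in the lower branch, which it does since $m=1$ gives a small $u$ and $u$ varies continuously only through integer jumps — cleaner to just say $u^2/8 - u +1 \ge 0$ with $u$ not in the forbidden interval $(4-2\sqrt2,\,4+2\sqrt2)$, and an elementary argument rules out the upper branch). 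This gives $mz \le 2|U|$, i.e. $|\cI| \le 2|U|/z$.

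Finally, the bound $\sum_{i \in \cI}|U_i| \le 2|U|$ follows immediately: again by Bonferroni, $\sum_{i\in\cI}|U_i| \le |U| + \binom{|\cI|}{2} y \le |U| + \tfrac12 |\cI|^2 y$, and plugging in $|\cI| \le 2|U|/z$ and $y \le z^2/(4|U|)$ gives $\sum_{i\in\cI}|U_i| \le |U| + \tfrac12 \cdot \tfrac{4|U|^2}{z^2}\cdot\tfrac{z^2}{4|U|} = |U| + \tfrac12|U| \le 2|U|$ — actually this yields $\tfrac32|U|$, which is even better, so $\le 2|U|$ is comfortably true. The main obstacle, such as it is, is purely the algebraic manipulation: making sure the quadratic inequality $u \le 1 + u^2/8$ is correctly interpreted so that one lands on $u \le 2$ rather than the spurious large root, and making sure the hypothesis $z \ge \sqrt{4|U|y}$ is used in exactly the right place. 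There is no combinatorial subtlety beyond Bonferroni's inequality, so I would keep the writeup to a few lines.
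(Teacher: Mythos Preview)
Your approach via Bonferroni is the same idea as the paper's, but the execution has a real gap at the quadratic step. From $u \le 1 + u^2/8$ you only get $u \notin (4-2\sqrt{2},\,4+2\sqrt{2})$; nothing you have written rules out the upper branch $u \ge 4+2\sqrt{2}$. Your parenthetical hint (``$m=1$ gives a small $u$ and $u$ varies \ldots\ through integer jumps'') is the right fix---apply Bonferroni to every subfamily $\cJ \subseteq \cI$, so that $u' := |\cJ|z/|U|$ avoids the forbidden interval for \emph{every} $|\cJ| = 1, 2, \ldots, m$; since $z \le |U_i| \le |U|$ forces step size $z/|U| \le 1 < 4\sqrt{2}$, one cannot jump over the gap, hence all $u' \le 4-2\sqrt{2} < 2$---but you never actually carry this out, and the ``continuity'' language is misleading for a fixed family.

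The paper uses the same subfamily device, just organized to avoid the quadratic detour: assuming $|\cI| > 2|U|/z$, pick $\cJ \subseteq \cI$ of size exactly $\lfloor 2|U|/z \rfloor + 1$; then for each $i \in \cJ$ the total overlap satisfies $\sum_{j \in \cJ:\, j \neq i}|U_i \cap U_j| \le (|\cJ|-1)y \le 2|U|y/z \le z/2 \le |U_i|/2$ (the third inequality is where $z^2 \ge 4|U|y$ enters), so Bonferroni gives $|U| \ge \sum_{i \in \cJ}|U_i|/2 \ge |\cJ|z/2 > |U|$, a contradiction. Choosing the subfamily size in advance makes the arithmetic a one-liner. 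Your argument for $\sum_i |U_i| \le 2|U|$ is fine (indeed gives $3|U|/2$); the paper instead reuses the same Bonferroni chain with $\cJ$ replaced by $\cI$.
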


\begin{proof} 
Aiming at a contradiction, suppose that $|\cI| > 2|U|/z$. 
Then there is $\cJ \subseteq \cI$ with 
$|\cJ| = \lfloor2|U|/z\rfloor+1$. 
Observe that, for any $i \in \cJ$, 
\begin{equation}\label{eq:SiSjlb}
\sum_{j\in \cJ: i \neq j}|U_j\cap U_i|\le (|\cJ|-1) y \le 2|U|y/z \le z/2 \le |U_i|/2 .
\end{equation}
So we obtain a contradiction by noting that 
\begin{equation}\label{eq:Slb}
|U|\ge \bigl|\bigcup_{i \in \cJ} U_i\bigr|
\ge \sum_{i \in \cJ} \Bigpar{|U_i| - \sum_{j\in \cJ: i \neq j}|U_j\cap U_i|} 
\ge \sum_{i \in \cJ} |U_i|/2 \ge |\cJ| z/2 > |U| .
\end{equation}

With $|\cI| \le 2|U|/z$ in hand, 
after replacing $\cJ$ with $\cI$, note that~\eqref{eq:SiSjlb}  
and the first three inequalities of~\eqref{eq:Slb} remain valid, 
completing the proof of $\sum_{i\in \cI}|U_i|\le 2|U|$. 
\end{proof}

Our final auxiliary result contains a number of convenient estimates 
involving the parameters $\cq_i, \cpi_i, \csig,I$ defined in \refS{sec:def}. 
Roughly put, \eqref{ai5}--\eqref{ai2} state that $\cq_i \approx \cq_{i+1}$, $1-2\csig\cq_i\cpi_i \approx q_{i+1}/q_i$ and $\cpi_I \approx \sqrt{\log(I\csig)}$,  
as predicted by~\eqref{eq:heur:qi2} and~\eqref{eq:heur:pi_i}. 
The technical estimates~\eqref{ai1}--\eqref{ai7} can safely be ignored on a first reading. 
The proof of \refL{lem:aux} is based on elementary calculus and thus deferred to \refApp{sec:apx} 
(it also establishes~$q_i \ge q_I = n^{-\beta + o(1)}$, which together with~$I \approx n^{\beta}$ and~\eqref{ai7} 
motivates our choice of~$\beta_0=1/14$). 
\begin{lemma}\label{lem:aux}
If $\beta \in (0, \beta_0)$, 
then there exists $\tau,n_0 > 0$ such that, 
for all $n \ge n_0$ and $0 \le i \le I$, 
\begin{gather}
\label{ai1}
\max\Bigset{\max_{j \in \set{0,1,2}}\bigset{q_i \cpi_i^j}, \: \sqrt{\csig}\cpi_i} \le 1,   \\
\label{ai7}
\min\Bigset{ \min_{j \in [4]}\bigset{q_i^j \sqrt{n}}, \: q_i^2 \sqrt{n}/I, \: q_i^3 \sqrt[4]{n}/\sqrt{I}} \ge n^{\tau} , \\
\label{ai5}
0 \le \cq_i-\cq_{i+1}\le 4\csig\cdot\min\{\cq_{i}, \: \cq_{i+1}, \: \cq_i \cpi_i\}, \\
\label{ai4}
\bigabs{(1-2\csig\cq_i\cpi_i)-q_{i+1}/q_i}\le 8\csig^2\cq_i,\\
\label{ai2}
\bigabs{\cpi_I-\sqrt{\log(I\csig)}} \le 2 .
\end{gather}
\end{lemma}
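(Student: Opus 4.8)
\textbf{Proof proposal for \refL{lem:aux}.}
The plan is to first analyze the ODE $\cPsi'(x) = e^{-\cPsi^2(x)}$, $\cPsi(0)=0$, and only afterwards translate the continuous estimates into the discrete ones via Taylor expansion (using that $\csig = (\log n)^{-2}$ is tiny). Since $\cPsi$ is strictly increasing, the implicit solution $x = \int_0^{\cPsi(x)} e^{t^2}\,dt =: \Phi(\cPsi(x))$ is available; from $\Phi(y) = \int_0^y e^{t^2}\,dt$ one reads off, by a routine integration-by-parts / squeezing argument, that $\Phi(y) = (1+o(1)) e^{y^2}/(2y)$ as $y \to \infty$, hence $\cPsi(x) = \sqrt{\log x} + O(\log\log x / \sqrt{\log x})$, and more crudely $\cPsi(x) \le \sqrt{\log x} + 1$ and $\cPsi(x) \ge \sqrt{\log(x/2)}$ for $x$ large. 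Applying this at $x = I\csig$ with $I = \ceil{n^\beta}$ and $\csig = (\log n)^{-2}$ (so $I\csig = n^{\beta+o(1)} \to \infty$) gives~\eqref{ai2}. Note also $\cPsi(x)$ is increasing and $\cPsi'(x) = e^{-\cPsi^2(x)}$ is decreasing, so $\cq_i = \cPsi'(i\csig)$ is nonincreasing in $i$; this gives the left inequality $0 \le \cq_i - \cq_{i+1}$ in~\eqref{ai5} for free.

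For the quantitative comparisons I would work directly with $q_i = \cPsi'(i\csig)$ and $\pi_i = \csig + \sum_{j<i}\csig q_j$. The key differential identity is $\cPsi''(x) = -2\cPsi'(x)^2\cPsi(x)$ (differentiate $\cPsi' = e^{-\cPsi^2}$), and also $\cPsi'''$ is bounded in terms of $\cPsi', \cPsi$ similarly; so a second-order Taylor expansion of $\cPsi'$ on the interval $[i\csig, (i+1)\csig]$ gives
\begin{equation*}
q_{i+1} = q_i - 2\csig q_i^2 \cPsi(i\csig) + O(\csig^2 \cdot q_i \cdot (\text{something bounded})),
\end{equation*}
and one checks the error is $O(\csig^2 q_i)$ using \eqref{ai1}-type bounds $q_i\cPsi(i\csig)^j \le 1$. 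To close the loop one needs $\cPsi(i\csig) = \pi_i + O(\text{small})$: indeed $\pi_i - \csig = \sum_{j<i}\csig q_j$ is a Riemann sum for $\int_0^{i\csig}\cPsi'(x)\,dx = \cPsi(i\csig)$, with error controlled by $\csig \cdot \sup|\cPsi''| = O(\csig)$ on the relevant range (using $q_i^2\cPsi(i\csig) \le 1$). Substituting yields~\eqref{ai4} (the stated $8\csig^2 q_i$ slack absorbs all constants once $n$ is large), and feeding $\cPsi(i\csig) = O(\pi_i)$ plus $q_{i+1} \le q_i$ back into the expansion gives the right inequality of~\eqref{ai5}, i.e. $q_i - q_{i+1} \le 4\csig\min\{q_i, q_{i+1}, q_i\pi_i\}$ (the three bounds differ only by constants since $q_{i+1} = (1+o(1))q_i$ and $\pi_i \ge \csig > $ a constant multiple is false, but $\pi_i\ge$ the trivial bound suffices for $i$ not tiny — for small $i$ one checks directly since then $q_i - q_{i+1} = O(\csig)$ anyway).

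The bounds~\eqref{ai1} are elementary: $q_i \in (0,1]$, $\cPsi$ is increasing, and from~\eqref{ai2} plus monotonicity $\cPsi(i\csig) \le \cPsi(I\csig) \le \sqrt{\log(I\csig)} + 2 = O(\sqrt{\log n})$, so $q_i\cPsi(i\csig)^j \le e^{-\cPsi(i\csig)^2}\cPsi(i\csig)^j$ is bounded (the function $e^{-u^2}u^j$ is bounded on $[0,\infty)$), and $\sqrt{\csig}\,\cPsi(i\csig) = O((\log n)^{-1}\sqrt{\log n}) = o(1) \le 1$. For~\eqref{ai7}, the crux is the lower bound on $q_i$: since $q_i \ge q_I = e^{-\cPsi(I\csig)^2}$ and $\cPsi(I\csig)^2 \le \log(I\csig) + 4\sqrt{\log(I\csig)} + 4 = (\beta+o(1))\log n$ by~\eqref{ai2} and $I\csig = n^{\beta+o(1)}$, we get $q_i \ge n^{-\beta - o(1)}$. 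Then $q_i^4\sqrt n \ge n^{1/2 - 4\beta - o(1)}$, $q_i^2\sqrt n/I \ge n^{1/2 - 2\beta - \beta - o(1)} = n^{1/2-3\beta-o(1)}$, and $q_i^3 n^{1/4}/\sqrt I \ge n^{1/4 - 3\beta - \beta/2 - o(1)} = n^{1/4 - 7\beta/2 - o(1)}$; all exponents are bounded below by a positive constant $\tau$ precisely because $\beta < \beta_0 = 1/14$ makes $1/4 - 7\beta/2 > 0$, which is the tightest of the three constraints. The main obstacle is bookkeeping: getting the Taylor-expansion error terms in~\eqref{ai4}--\eqref{ai5} down to exactly the claimed constants ($8\csig^2 q_i$, $4\csig$) rather than just $O(\csig^2 q_i)$, $O(\csig)$ — but since the lemma only needs to hold for $n \ge n_0$, any fixed constants work after absorbing lower-order terms, so this is genuinely routine calculus.
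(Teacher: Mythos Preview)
Your approach is essentially the same as the paper's: implicit solution of the ODE, the asymptotic $\cPsi(x)\sim\sqrt{\log x}$ for~\eqref{ai2}, the Riemann-sum comparison $\cpi_i-\cPsi(i\csig)=O(\csig)$, Taylor expansion via $\cPsi''=-2(\cPsi')^2\cPsi$ and $\cPsi'''$ for~\eqref{ai5}--\eqref{ai4}, the elementary fact that $e^{-u^2}u^j\le 1/2$ for~\eqref{ai1}, and the exponent bookkeeping with $q_I=n^{-\beta+o(1)}$ identifying $1/4-7\beta/2>0$ as the binding constraint for~\eqref{ai7}. One small remark: your tentative case-split ``for small $i$ one checks directly'' in~\eqref{ai5} is unnecessary --- the paper sharpens the Riemann-sum comparison to $\cpi_i-\cPsi(i\csig)\in[\csig,2\csig]$, which gives $\cPsi((i+1)\csig)\le\cpi_{i+1}-\csig\le\cpi_i$ and hence $q_i-q_{i+1}\le 2\csig q_i^2\cpi_i$ uniformly in~$i$, so the $q_i\cpi_i$ bound comes out directly without any splitting.
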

As a simple application, for $0 \le i \le I$ 
we now bound the stabilization probability~$\hp_{e,i}$ defined in~\eqref{def:pe}. 
Since~\eqref{ai7} implies $\cq_i\sqrt{\csig}\sqrt{n} \gg 1$, 
by applying $(1-p)^r \ge 1-pr = 1-\csig r/\sqrt{n}$ (valid for $r \ge 1$) we infer 
\begin{equation}\label{eq:hpei}
\hp_{e,i} \le 1-(1-p)^{2\cq_i(\cpi_i+\sqrt{\csig})\sqrt{n}} \le  2\csig\cq_i(\cpi_i+\sqrt{\csig}) \le \cq_i ,
\end{equation}
where we used $\sqrt{\csig}\cpi_i \le 1$ and $\csig \ll 1$ (see~\eqref{ai1} and~\eqref{def:sigma}) for the last inequality.

\section{Analyzing the nibble}\label{sec:proof} 
In this section we prove our main nibble result \refT{prop} 
(which in turn implies our main tool \refT{thmiteration}, see \refS{sec:mainnibble})
as a corollary of the following auxiliary lemma. 
\begin{lemma}\label{lem:prop}
Under the assumptions of \refT{prop}, for~$n \ge n_0(\gamma,\delta,\beta,C)$ we have 
\begin{align}
\label{eq:goal1}
\P(\neg \cG_{i+1} \mid \cG_{\le i}) & \le n^{-\omega(1)} \qquad \text{for all $0 \le i \le I-1$,}\\
\label{eq:goal2}
\P(\neg \cT_I \cap \cG_{\le I}) & \le n^{-\omega(1)} .
\end{align} 
\end{lemma}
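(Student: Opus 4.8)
The plan is to prove the two estimates of \refL{lem:prop} separately; note that \refT{prop} then follows by the union bound $\P\bigpar{\neg(\cT_I\cap\cG_{\le I})}\le\P(\neg\cT_I\cap\cG_{\le I})+\sum_{0\le i<I}\P(\neg\cG_{i+1}\mid\cG_{\le i})$, using \refR{rem:event0} (so that $\cG_0$ holds for free) and $I+1=O(n^{\beta_0})$. To prove~\eqref{eq:goal1} for a fixed $i$, I would condition on an arbitrary realisation of the history $(\vO_j,\vE_j,\vF_j,\Gamma_j)_{0\le j\le i}$ lying in $\cG_{\le i}$ and then bound, conditionally, each of $\P(\neg\cN_{i+1})$, $\P(\neg\cP_{i+1})$, $\P(\neg\cQ^+_{i+1})$, $\P(\neg\cQ_{i+1})$, since $\cG_{i+1}$ is their intersection. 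Given the history, the only remaining randomness is the two independent sprinklings on $\vO_i$ that include $e$ into $\Gamma_{i+1}$ with probability $p=\csig/\sqrt n$ and into $S_{i+1}$ with probability $\hp_{e,i}$. The point of conditioning on $\cG_{\le i}$ is twofold: first, $\cP_i$ gives $|\cY_e(i)|\le2\cq_i\cpi_i\sqrt n\le2\cq_i(\cpi_i+\sqrt\csig)\sqrt n$, so the maximum in the exponent of~\eqref{def:pe} is attained at its first argument and, as recorded in~\eqref{def:pe:cons}, $\P(e\notin\cC_{i+1}\mid\vO_i,\vE_i)=(1-p)^{2\cq_i(\cpi_i+\sqrt\csig)\sqrt n}$ is the \emph{same} history-independent constant for every $e\in\vO_i$ --- this is exactly what the stabilization mechanism buys; second, the degree/codegree bounds in $\cN_i,\cP_i,\cQ^+_i,\cQ_i$ give precisely the a priori control on the step-$i$ variables needed below.

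For each of the variables in~\eqref{def:Ni}--\eqref{def:Qi} the conditional expectation is then a routine computation, and using \refL{lem:aux} one checks that it lies below (or above, for the lower bound in $\cQ_{i+1}$) the relevant threshold with a gap of order a fixed power of $\csig$ times the threshold; this gap is precisely the slack engineered into the definitions (the factor~$2$ in the $|N_{\Gamma_i}(v)|$- and $|\cY_{uv}(i)|$-bounds, the relaxed $|\cZ_{uv}(i)|$-bound, and the degradation $\ctau_i\searrow1-\delta/2$, which absorbs the $\sqrt\csig$-order relative errors introduced per step by the $\sqrt\csig$ in the stabilization exponent). For the concentration step I would decompose the one-step changes into two kinds of contributions: clean independent sprinkles (the direct effect of $\Gamma_{i+1}$ and $S_{i+1}$, and the sum defining $|N_{\Gamma_{i+1}}(v)|$), handled by Chernoff bounds (\refR{rem:chernoff}) or by \refT{thm:BDI} --- crucially invoking the $\cC t$-free inequality~\eqref{eq:BDImon} for the \emph{decreasing} quantities such as $|N_{\vO_{i+1}}(v)|$ and $|\vO_{i+1}(A,B)|$ --- and the limited-overlap counts coming from the newly closed edges $\cC^1_{i+1}$ and $\cC^2_{i+1}$ (which count occurrences of one, resp.\ two, $\Gamma_{i+1}$-edges), handled by Warnke's inequality \refT{thm:UT}, whose bounded-overlap hypothesis is supplied by the codegree bounds in $\cP_i$. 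For the $O(n^2)$-many events $\cN_{i+1},\cP_{i+1}$ it suffices that each fails with probability $n^{-\omega(1)}$, which follows because~\eqref{ai7} makes all the relevant means $\ge n^\tau$ while the gaps are polylog- or $\csig^{O(1)}$-fractions of those means, giving tails $\exp(-n^{\Omega(1)})$.

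The delicate events are $\cQ^+_{i+1}$ and especially $\cQ_{i+1}$, which range over $\exp(O(s\log n))$ pairs $(A,B)$, so here one needs a per-pair failure probability of $e^{-\omega(s\log n)}$. For $\cQ_{i+1}$ the allowed deviation at step $i$ is the increment of the $\ctau$-budget, of relative order $\delta\csig\cq_i/\cpi_I$ by the design of~\eqref{def:tau}; one checks that this is comfortably larger than the typical per-step fluctuation of $|\vO_{i+1}(A,B)|$, the single-edge influences and limited-overlap parameters being controlled through $\cN_i$ and $\cP_i$, so that the relevant tail ($t^2/(2\lambda)$ in~\eqref{eq:BDImon}, or $t^2/(2C(\mu+t))$ in~\eqref{eq:C}) is $\omega(s\log n)$. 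This is precisely where the hypothesis $C\ge D_0/(\delta^2\sqrt\beta\gamma)$ of \refT{prop} is used: it forces $s=\ceil{C\sqrt{n\log n}}$, hence the relevant counts $\gtrsim\gamma\cq_Is^2$, to be large enough relative to the error terms for these exponential tails to beat the $\exp(O(s\log n))$-sized union bound (compare the heuristic in \refR{rem:prop:const}).

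Finally, for~\eqref{eq:goal2} I would make rigorous the heuristic at the end of \refS{sec:mainnibble}. On $\cG_{\le I}$ the events $\cQ_i$ pin $|\vO_i(A,B)|\in[\ctau_i\cq_i,\cq_i]\cdot|\vO_0(A,B)|$ for every $(A,B)\in\fS_s$ and all $0\le i\le I$; writing $|\vF_I(A,B)|=\sum_{0\le i<I}\bigpar{|\Gamma_{i+1}(A,B)|-|E(\cD_{i+1})(A,B)|}$, the correction $\sum_i|E(\cD_{i+1})(A,B)|$ is shown negligible (the number of bad pairs and triples of $\Gamma_{i+1}$-edges being controlled by $\cP_i$), while the main sum has conditional mean $(1\pm o(1))\rho|\vO_0(A,B)|\ge\rho\gamma s^2$. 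A concentration bound for this sum over the $\Theta(n^\beta)$ nibble steps --- the increments being not independent, so a variance-based argument is used, together with \refT{thm:UT} for the rare large per-step contributions and the $\ctau_I=1-\delta/2$ accounting for the lower side --- then gives $|\vF_I(A,B)|=(1\pm\delta)\rho|\vO_0(A,B)|$ for all $(A,B)\in\fS_s$ with probability $1-e^{-\omega(s\log n)}$, using $\sum_{0\le i<I}\csig\cq_i=\cpi_I-\csig$ from~\eqref{def:pii}, $|\cpi_I-\sqrt{\log(I\csig)}|\le2$ from~\eqref{ai2}, and the definitions of $\rho,I,\csig$; here too the hypothesis on $C$ makes the total mean large enough to beat the union bound over the $\exp(O(s\log n))$ pairs. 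I expect the main obstacle throughout to be the two-sided control demanded by $\cQ_{i+1}$ and $\cT_I$: the upper bounds follow from monotonicity and the step-$i$ bounds, but the lower bounds force one to show that only few open edges are lost per step, which is feasible only because the stabilization mechanism makes $\P(e\in\vO_{i+1}\mid\vO_i,\vE_i)$ history-independent (so its expectation can be matched to $\ctau_{i+1}\cq_{i+1}/(\ctau_i\cq_i)$) and because Warnke's inequality handles the limited-overlap removal counts --- and controlling the single-edge influences and overlap parameters well enough, using nothing stronger than the one-sided bounds in $\cN_i$ and $\cP_i$, is the technical heart of the matter.
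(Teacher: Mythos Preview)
Your high-level plan matches the paper's: decompose $\neg\cG_{i+1}$ into its four sub-events, condition on~$\cG_{\le i}$, exploit the stabilization identity~\eqref{def:pe:cons} (the paper packages this as \refL{lemma1}), and combine \refT{thm:BDI}/Chernoff with \refT{thm:UT}; likewise your $\cT_I$ split into the $\Gamma_{i+1}$-sum and the deletion correction is exactly \refCl{claimP5}. Where your sketch has a genuine gap is the step you yourself flag as the technical heart, namely~$\cQ^+_{i+1}$ and~$\cQ_{i+1}$.

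The problem is the edge-effect on $|\vO_{i+1}(A,B)|$: adding a single edge $e=ww'$ to $\Gamma_{i+1}$ can close up to $|N_{\vE_i}(w)\cap(A\cup B)|+|N_{\vE_i}(w')\cap(A\cup B)|$ edges of $\vO_i(A,B)$, and nothing in $\cN_i$ or $\cP_i$ controls this \emph{local-into-$(A\cup B)$} degree --- it can be as large as~$2s$. With Lipschitz constants (or overlap parameters) of that size, neither \refT{thm:BDI} nor \refT{thm:UT} produces tails beating the $n^{2s}$ union bound, so your stated plan of supplying ``the bounded-overlap hypothesis\ldots\ by the codegree bounds in $\cP_i$'' does not close. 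The paper's device (\refS{sec:P3P4}) is to replace $\cC^1_{i+1},\cC^2_{i+1}$ by auxiliary sets $\chC^1_{i+1},\chC^2_{i+1}$ that ignore closures through the few high-degree vertices $W_1=\{w:|N_{\vE_i}(w)\cap(A\cup B)|\ge z\}$ and $W_2=\{w:|N_{\Gamma_{i+1}}(w)\cap(A\cup B)|\ge z\}$, with $z=\csig^4\cq_i^2|A|$; this caps the edge-effect at~$2z$ (and the overlap parameter in \refT{thm:UT} at~$4z$), while the leftover $X_4=|\vO_i(A,B)\cap(\cC_{i+1}\cup\cC^2_{i+1})\setminus(\chC_{i+1}\cup\chC^2_{i+1})|$ is bounded \emph{deterministically} on $\cN_{i+1}\cap\cP_{i+1}$ via the combinatorial overlap \refL{lem:overlap}. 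This high-degree truncation plus deterministic remainder is the missing idea in your outline.

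A smaller difference: for the main term $X$ in $\cT_I$ the paper does not run a variance/martingale argument across the~$I$ dependent steps, but uses a stochastic-domination trick --- on $\cG_{\le I}$ each increment $|\Gamma_{i+1}(A,B)|$ is sandwiched (conditionally on~$\cF_i$) between binomials with parameters $(\ceil{\ctau_i\cq_i|\vO_0(A,B)|},p)$ and $(\floor{\cq_i|\vO_0(A,B)|},p)$, so the whole sum is compared to a single binomial and Chernoff finishes directly. Your variance-based route may well work, but it is not what the paper does.
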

\begin{proof}[Proof of \refT{prop}]
Recalling $I \le \lceil n^{\beta_0}\rceil = n^{O(1)}$ and $\cG_{\le i}=\bigcap_{0\le j\le i}\cG_j$, 
note that $\P(\neg \cG_{0})=0$ (see \refR{rem:event0}) and~\eqref{eq:goal1} readily imply 
$\P(\neg\cG_{\le I})\le n^{-\omega(1)}$, 
which together with~\eqref{eq:goal2} completes the proof. 
\end{proof}	
The remainder of this section is devoted to the proof of \refL{lem:prop}: 
the proof of~\eqref{eq:goal1} with~$\neg\cG_{i+1} = \neg\cN_{i+1} \cup \neg\cP_{i+1} \cup \neg\cQ^+_{i+1}\cup \neg\cQ_{i+1}$  is spread across Sections~\ref{sec:P1}--\ref{sec:P3P4},  
and the proof of~\eqref{eq:goal2} is given in \refS{sec:P5}.

\subsection{Preliminaries: setup and conventions}\label{sec:setupconvetions} 
To avoid clutter, up to (and including) \refS{sec:P3P4} we shall suppress the conditioning in the notation: 
we will write $\P(\cdot)$ and $\E(\cdot)$ as shorthand for $\P(\cdot \mid \cF_i)$ and $\E(\cdot \mid \cF_i)$, 
where $(\cF_i)_{0 \le i \le I}$ denotes the natural filtration associated with $(\vO_i,\vE_i,\vF_i,\Gamma_i,\cW_i)_{0\le i\le I}$, as usual. 
We will also tacitly assume that the $\cF_i$-measurable event~$\cG_{\le i}$ holds. 
Conditional on $\cF_i$, note that by construction of the random edge-sets $\Gamma_{i+1}$ and $\cW_{i+1}$, 
the (conditional) probability space formally consists of the $2 |\vO_i|$ independent Bernoulli random variables 
$\xpar{\indic{e \in \Gamma_{i+1}}, \indic{e \in \cW_{i+1}}}_{e \in \vO_i}$, 
with $\P(e \in \Gamma_{i+1})= p = \sigma/\sqrt{n}$ and $\P(e \in \cW_{i+1}) = \hp_{e,i} \le q_i$, see~\eqref{eq:hpei}.

Using the above setup and conventions, we shall repeatedly consider random variables of form 
\begin{equation}\label{eq:Xf}
X=f\Bigpar{\bigpar{\indic{e \in \Gamma_{i+1}}, \indic{e \in \cW_{i+1}}}_{e \in \vO_i}}. 
\end{equation}
To get a handle on the (conditional) expectation~$\E X$ we will often use $\vO_{i+1} \subseteq \vO_i \setminus \cC_{i+1}$ together with the following key lemma, 
which hinges on the stabilization mechanism 
to equalize all (conditional) probabilities~$\P(e\not\in\cC_{i+1})$, 
see~\eqref{ob1} below. 
(The extra $\sqrt{\csig}$ term in~\eqref{def:pe} ensures 
that~$\P(e\not\in\cC_{i+1}) < \cq_{i+1}/\cq_i$ 
holds with plenty of elbow room, which is convenient for avoiding ugly error terms in the upper bounds of~\eqref{def:Ni}--\eqref{def:Qi}.)  
\begin{lemma}\label{lemma1}
We have $\P(e\not\in\cC_{i+1}) - \cq_{i+1}/\cq_i \in [-3\csig^{3/2}\cq_i, \:  -\csig^{3/2}\cq_i]$ for all $e\in \vO_i$. 
\end{lemma}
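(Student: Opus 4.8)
The plan is to unwind the definition~\eqref{def:pe:cons} and reduce the claim to a quantitative estimate on the exponent. Recall that, conditional on $\cF_i$, the sets $\Gamma_{i+1}$ and $\cW_{i+1}$ are generated by independent Bernoulli variables, with $\P(e \in \Gamma_{i+1}) = p = \csig/\sqrt n$ and $\P(e \in \cW_{i+1}) = \hp_{e,i}$. From the definitions~\eqref{def:Ci}--\eqref{def:pe} one reads off, for $e \in \vO_i$,
\begin{equation*}
\P(e \notin \cC_{i+1}) = \P(e \notin \cC^1_{i+1}) \cdot (1 - \hp_{e,i}) = (1-p)^{|\cY_e(i)|} \cdot (1-p)^{\max\{2\cq_i(\cpi_i + \sqrt\csig)\sqrt n - |\cY_e(i)|, \, 0\}} = (1-p)^{m_{e,i}},
\end{equation*}
where $m_{e,i} := \max\{2\cq_i(\cpi_i + \sqrt\csig)\sqrt n, \, |\cY_e(i)|\}$; here the first equality uses that $\cC^1_{i+1}$ (which depends only on the $\Gamma_{i+1}$-variables indexing edges in $\cY_e(i)$) and $\cW_{i+1}$ are independent, and that $e \in \cC^1_{i+1}$ iff at least one of the $|\cY_e(i)|$ relevant $\Gamma_{i+1}$-variables is~$1$. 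Crucially, on $\cG_{\le i} \subseteq \cP_i$ we have the one-sided bound $|\cY_e(i)| \le 2\cq_i\cpi_i\sqrt n$ from~\eqref{def:Pi}, so the maximum is attained by the \emph{first} term and $m_{e,i} = 2\cq_i(\cpi_i + \sqrt\csig)\sqrt n$ is deterministic (independent of the history). This is precisely the point of the stabilization mechanism, and it is what makes the bound uniform over $e \in \vO_i$.

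It then remains to estimate $(1-p)^{M} - \cq_{i+1}/\cq_i$ where $M := 2\cq_i(\cpi_i+\sqrt\csig)\sqrt n$ and $p = \csig/\sqrt n$, and to show this lies in $[-3\csig^{3/2}\cq_i, -\csig^{3/2}\cq_i]$. The natural route is to compare both quantities to $1 - 2\csig\cq_i\cpi_i$. For the second term, estimate~\eqref{ai4} of \refL{lem:aux} gives $\bigabs{(1 - 2\csig\cq_i\cpi_i) - \cq_{i+1}/\cq_i} \le 8\csig^2\cq_i$, which is of lower order than $\csig^{3/2}\cq_i$ once $n$ (hence $1/\csig = (\log n)^2$) is large. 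For the first term, write $pM = 2\csig\cq_i(\cpi_i + \sqrt\csig)$ and use the elementary two-sided bounds for $(1-p)^M = \exp(M\log(1-p))$: since $pM \le 2\csig\cq_i(\cpi_i+\sqrt\csig) = o(1)$ (using $\sqrt\csig\cpi_i \le 1$ from~\eqref{ai1} and $\cq_i \le 1$), one has $(1-p)^M = 1 - pM + O((pM)^2) = 1 - 2\csig\cq_i\cpi_i - 2\csig^{3/2}\cq_i + O(\csig^2)$. The leading correction relative to $1 - 2\csig\cq_i\cpi_i$ is thus exactly the $-2\csig^{3/2}\cq_i$ coming from the $\sqrt\csig$ in the exponent of~\eqref{def:pe}, and all remaining errors — the $O((pM)^2)$ from the logarithm expansion and the $8\csig^2\cq_i$ from~\eqref{ai4} — are $O(\csig^2\cq_i)$, hence absorbed into the slack between $-2\csig^{3/2}\cq_i$ and the target interval endpoints $-\csig^{3/2}\cq_i$ and $-3\csig^{3/2}\cq_i$ for $n$ large. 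Chaining these gives $\P(e \notin \cC_{i+1}) - \cq_{i+1}/\cq_i = -2\csig^{3/2}\cq_i + O(\csig^2\cq_i) \in [-3\csig^{3/2}\cq_i, -\csig^{3/2}\cq_i]$, as claimed.

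The only genuine subtlety — and the step I would be most careful about — is verifying that $m_{e,i}$ is truly history-independent, i.e.\ that the $|\cY_e(i)|$-term never dominates. This rests entirely on being allowed to condition on $\cG_{\le i}$, and in particular on $\cP_i$; the setup in \refS{sec:setupconvetions} grants exactly this, so the argument is legitimate, but it is worth flagging explicitly since without it the lemma is false (e.g.\ $|\cY_e(i)|$ can be as large as $\Theta(n)$ a priori). Everything else is routine calculus with the parameters of \refS{sec:def}: one should just double-check that the elementary expansions of $(1-p)^M$ are applied with enough room, using $\csig = (\log n)^{-2} \to 0$, $\cq_i \in (0,1]$, and~\eqref{ai1}, so that the $O(\csig^2)$-type errors are comfortably smaller than $\csig^{3/2}$ for $n \ge n_0$.
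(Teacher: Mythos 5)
Your proposal is correct and follows essentially the same route as the paper: use $\cG_{\le i}\subseteq\cP_i$ to make the exponent in $(1-p)^{m_{e,i}}$ deterministically equal to $2\cq_i(\cpi_i+\sqrt\csig)\sqrt n$, expand $(1-p)^M$ around $1-pM$, and invoke~\eqref{ai4} to replace $1-2\csig\cq_i\cpi_i$ by $\cq_{i+1}/\cq_i$. The only cosmetic difference is that the paper applies the explicit elementary inequality $1-rx\le(1-x)^r\le 1-rx+\binom{r}{2}x^2$ rather than a big-$O$ expansion of $\exp(M\log(1-p))$; also note your intermediate display writes the error as $O(\csig^2)$ when, as you correctly state in the next clause, what is needed (and what holds, via~\eqref{ai1}) is $O(\csig^2\cq_i)$, since $\cq_i$ can be polynomially small.
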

\begin{proof}
For any $e \in \vO_i$, since $|\cY_e(i)| \le 2\cq_i\cpi_i\sqrt{n}$ by $\cG_{\le i} \subseteq \cP_i$, 
by definition of $\cC_{i+1} = \cC^1_{i+1} \cup S_{i+1}$ we have 
\begin{equation}\label{ob1}  
\P(e\not\in\cC_{i+1})=\P(e\not\in \cC^1_{i+1}) \cdot \P(e\not\in S_{i+1}) = (1-p)^{|\cY_e(i)|} \cdot (1-\hp_{e,i})=(1-p)^{2\cq_i(\cpi_i+\sqrt{\csig})\sqrt{n}} .           
\end{equation}
It is well-known (and easy to check) that $1-rx\le(1-x)^r\le1-rx+\binom{r}{2}x^2$ for all~$x \in [0,1]$ and~$r\ge 2$.  
Using $\sqrt{n}p=\sigma \ll 1$ and $\max\{\cq_i,\cq_i\cpi_i,\cq_i\cpi_i^2\} \le 1$ (see~\eqref{ai1}), it follows that 
\begin{equation*}
\Bigabs{\P(e\not\in\cC_{i+1})-\bigsqpar{1-2\csig\cq_i(\cpi_i+\sqrt{\csig})}} \le 2\csig^2 \cq_i^2(\cpi_i+\sqrt{\csig})^2 = O( \csig^2\cq_i) = o(\csig^{3/2}\cq_i) .
\end{equation*}
This completes the proof 
since $1-2\csig\cq_i\cpi_i = q_{i+1}/q_i + o(\csig^{3/2}\cq_i)$ by~\eqref{ai4}. 
\end{proof}
To deduce concentration properties of such random variables~$X$ we shall frequently rely on the bounded differences inequality (\refT{thm:BDI}). 
In those cases we will bound the associated parameter~$\lambda$ via  
\begin{equation}\label{eq:l}
\lambda = \sum_{e \in \vO_i} c_e^2 \P(e \in \Gamma_{i+1}) + \sum_{e \in \vO_i} \hc_e^2 \P(e \in \cW_{i+1}) \le p \sum_{e \in \vO_i} c_e^2  + \cq_i\sum_{e \in \vO_i} \hc_e^2 ,
\end{equation}
where the \emph{edge-effect} $c_e$ is an upper bound on how much~$X$ can change if we modify the indicator $\indic{e \in \Gamma_{i+1}}$ (alter whether~$e$ is in $\Gamma_{i+1}$ or not), 
and the  \emph{stabilization-effect} $\hc_e$ is an upper bound on how much~$X$ can change if we modify the indicator $\indic{e \in \cW_{i+1}}$ (alter whether~$e$ is in $\cW_{i+1}$ or not). 
Moreover, the following simple observation will later allow us to control the above sum~\eqref{eq:l} of these effects. 
\begin{lemma}\label{summinglemma}
If $\cG_{\le i}$ holds, then $\sum_{e \in \vO_i}|\cY_e(i) \cap J| \le 2\cq_i\cpi_i \sqrt{n} \cdot |J| $ for any edge-subset $J \subseteq \binom{V}{2}$.  
\end{lemma}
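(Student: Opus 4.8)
The plan is to exchange the order of summation and then recognise that the resulting inner quantity is again a mixed codegree, which is already controlled by~$\cP_i$. Writing $J':=J\cap\vO_i$ and using that $\cY_e(i)\subseteq\vO_i$ for every~$e\in\vO_i$ (so only edges of~$J'$ can ever lie in some~$\cY_e(i)$), one has
\begin{equation*}
\sum_{e\in\vO_i}\bigabs{\cY_e(i)\cap J}=\bigabs{\bigset{(e,f)\in\vO_i\times J':\: f\in\cY_e(i)}}=\sum_{f\in J'}\bigabs{\bigset{e\in\vO_i:\: f\in\cY_e(i)}}.
\end{equation*}
First I would prove the combinatorial identity $\set{e\in\vO_i:\: f\in\cY_e(i)}=\cY_f(i)$ for every~$f\in\vO_i$; substituting this in and using $\cG_{\le i}\subseteq\cP_i$, which gives $|\cY_f(i)|\le 2\cq_i\cpi_i\sqrt{n}$ for all~$f$, together with $|J'|\le|J|$, immediately yields the claimed bound.

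The identity itself is just the definition~\eqref{def:Yuv} read in two directions. Unravelling $f\in\cY_{uv}(i)$ for $e=uv$ and $f=xy$ says precisely that $e$ and $f$ share exactly one vertex and that the edge joining their two non-shared endpoints lies in~$\vE_i$ --- a condition that is visibly symmetric in the (open) edges~$e$ and~$f$. Concretely, a short case analysis on which endpoint of~$e$ equals which endpoint of~$f$ shows $\set{e\in\vO_i:\: f\in\cY_e(i)}=\set{xv\in\vO_i:\: vy\in\vE_i}\cup\set{yv\in\vO_i:\: vx\in\vE_i}$, and the right-hand side is exactly~$\cY_{xy}(i)$ by~\eqref{def:Yuv}.

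I expect the only slightly fiddly point to be the bookkeeping in this case analysis: one must check that the two clauses of~\eqref{def:Yuv} and the two possibilities $f=uw$ versus $f=vw$ produce the same pairs of edges (so nothing is double-counted), and that degenerate coincidences such as the `third vertex'~$w$ equalling~$u$ or~$v$ cannot occur because our edges always join distinct vertices. None of this is an analytic obstacle; once the identity is in place, the lemma follows by a single application of the codegree bound in~$\cP_i$ and the trivial estimate $|J\cap\vO_i|\le|J|$.
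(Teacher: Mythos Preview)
Your proposal is correct and follows essentially the same route as the paper: swap the order of summation and use the symmetry of the relation $f\in\cY_e(i)$ together with the bound $|\cY_f(i)|\le 2\cq_i\cpi_i\sqrt{n}$ from~$\cP_i$. The only cosmetic difference is that the paper contents itself with the one-sided implication $f\in\cY_e(i)\Rightarrow e\in\cY_f(i)$ (yielding an inequality rather than your equality), and sums over all of~$J$ directly instead of first restricting to~$J\cap\vO_i$; this makes the argument a line shorter but is not materially different.
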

\begin{proof}
For any $e \in \vO_i$, note that $f \in \cY_e(i)$ implies $e \in \cY_f(i)$. 
As $\cY_f(i) \subseteq \vO_i$, we infer 
\begin{equation*}
\begin{split}
\sum_{e \in \vO_i}|\cY_e(i) \cap J| & = \sum_{f \in J}\sum_{e \in \vO_i}\indic{f \in \cY_e(i)} \le \sum_{f \in J}\sum_{e \in \vO_i}\indic{e \in \cY_f(i)} =  \sum_{f \in J} |\cY_f(i)| .
\end{split}
\end{equation*}
This completes the proof since $\cG_{\le i} \subseteq \cP_i$ implies $|\cY_f(i)| \le 2\cq_i\cpi_i\sqrt{n}$. 
\end{proof}

\subsection{Event $\cN_{i+1}$: degree-like variables $|N_{\vO_{i+1}}(v)|$ and $|N_{\Gamma_{i+1}}(v)|$}\label{sec:P1}  
\begin{lemma}\label{lem:Pro1}
We have 
$\P(\neg \cN_{i+1})\le n^{-\omega(1)}$. 
\end{lemma}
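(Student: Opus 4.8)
The plan is to handle the two parts of the event $\cN_{i+1}$ separately, namely the open-degree bound $|N_{\vO_{i+1}}(v)| \le q_{i+1}n$ and the fresh-edge-degree bound $|N_{\Gamma_{i+1}}(v)| \le 2\sigma q_i\sqrt{n}$, and to union-bound over the $n$ choices of $v\in V$ at the end. Throughout I work conditionally on $\cF_i$ and assume $\cG_{\le i}$ holds, as per the conventions of \refS{sec:setupconvetions}; in particular $\cN_i$ gives $|N_{\vO_i}(v)| \le q_i n$. Both variables have the form~\eqref{eq:Xf} in the $2|\vO_i|$ independent Bernoulli variables $(\indic{e\in\Gamma_{i+1}}, \indic{e\in\cW_{i+1}})_{e\in\vO_i}$, so \refT{thm:BDI} is the natural tool.

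For $|N_{\Gamma_{i+1}}(v)|$: this is a sum of $|N_{\vO_i}(v)| \le q_i n$ independent Bernoullis each with probability $p = \sigma/\sqrt{n}$, so its conditional mean is at most $p q_i n = \sigma q_i\sqrt{n}$. By the Chernoff bound of \refR{rem:chernoff}, taking deviation $t = \sigma q_i\sqrt{n}$ gives failure probability $\exp(-\Theta(\sigma q_i\sqrt{n}))$ for each $v$; since~\eqref{ai7} guarantees $q_i\sqrt{n} \ge q_i^4\sqrt{n} \ge n^\tau$ and $\sigma = (\log n)^{-2}$, this is $\exp(-n^{\Omega(1)})$, which survives the union bound over $v$. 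For $|N_{\vO_{i+1}}(v)|$: since $\vO_{i+1}\subseteq\vO_i\setminus\cC_{i+1}$, we have $|N_{\vO_{i+1}}(v)| \le |N_{\vO_i}(v)| - |\{f\in N_{\vO_i}(v): f\notin\cC_{i+1}\}|$, so it suffices to show the latter count of surviving edges at $v$ concentrates around roughly $(q_{i+1}/q_i)|N_{\vO_i}(v)|$. By \refL{lemma1}, each $f\in\vO_i$ satisfies $\P(f\notin\cC_{i+1}) \le q_{i+1}/q_i - \sigma^{3/2}q_i$, so the conditional mean of $|N_{\vO_{i+1}}(v)|$ is at most $(q_{i+1}/q_i - \sigma^{3/2}q_i)|N_{\vO_i}(v)| \le (q_{i+1}/q_i)q_i n - \sigma^{3/2}q_i|N_{\vO_i}(v)| = q_{i+1}n - (\text{slack})$; I then apply the bounded differences inequality~\eqref{eq:BDI} (or the monotone version, since $|N_{\vO_{i+1}}(v)|$ is a decreasing function of the $\Gamma$- and $\cW$-indicators) with deviation $t$ equal to (say) half the slack $\tfrac12\sigma^{3/2}q_i |N_{\vO_i}(v)|$, provided that slack is large — but this requires a lower bound $|N_{\vO_i}(v)| \gtrsim q_i n$, which need not hold.

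The main obstacle, and the one point needing care, is exactly this: $\cN_i$ only gives an \emph{upper} bound on $|N_{\vO_i}(v)|$, so if $|N_{\vO_i}(v)|$ happens to be small there is no room to extract a slack large enough to beat the union bound. The clean fix is to treat small and large $|N_{\vO_i}(v)|$ separately. If $|N_{\vO_i}(v)| \le q_{i+1}n$, then trivially $|N_{\vO_{i+1}}(v)| \le |N_{\vO_i}(v)| \le q_{i+1}n$ and there is nothing to prove (deterministically). If $|N_{\vO_i}(v)| > q_{i+1}n$, then since $q_{i+1} = q_i(1 - O(\sigma q_i\pi_i))$ by~\eqref{ai5}, we have $|N_{\vO_i}(v)| \ge q_{i+1}n \ge \tfrac12 q_i n$ for large $n$, giving the needed lower bound, hence a slack $\ge \tfrac14\sigma^{3/2}q_i^2 n$; applying~\eqref{eq:BDImon} with edge-effects $c_e, \hat c_e \le 1$ (each indicator changes the count by at most $1$) yields $\lambda \le p|N_{\vO_i}(v)| + q_i|N_{\vO_i}(v)| \le 2q_i^2 n$, so the failure probability is $\exp(-\Theta(\sigma^3 q_i^4 n/ \cdot)) = \exp(-n^{\Omega(1)})$ by~\eqref{ai7} ($q_i^4\sqrt{n}\ge n^\tau$ forces $q_i^4 n \ge n^{1/2+\tau}$, say, comfortably), again surviving the union bound over the $n$ vertices. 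Assembling the two parts and the union bound over $v\in V$ completes the proof.
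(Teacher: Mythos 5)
Your treatment of the $|N_{\Gamma_{i+1}}(v)|$ part is essentially identical to the paper's and is correct. For the $|N_{\vO_{i+1}}(v)|$ part, however, there are two issues, one a matter of taste and one a genuine error.

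First, the obstacle you identify (needing a lower bound on $|N_{\vO_i}(v)|$ to get slack) does not actually arise, and your two-case workaround is unnecessary. The point is that \refL{lemma1} (which gives $\P(f\notin\cC_{i+1})\le q_{i+1}/q_i - \sigma^{3/2}q_i$ together with the implicit fact that this quantity is nonnegative, being a probability) lets one bound the \emph{entire product}:
\[
\E X \;\le\; |N_{\vO_i}(v)|\cdot\bigl(q_{i+1}/q_i - \sigma^{3/2}q_i\bigr)\;\le\; q_i n \cdot\bigl(q_{i+1}/q_i - \sigma^{3/2}q_i\bigr)\;=\; q_{i+1}n - \sigma^{3/2}q_i^2 n ,
\]
where the second inequality uses $|N_{\vO_i}(v)|\le q_i n$ from $\cN_i$ and the nonnegativity of the factor. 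The slack $\sigma^{3/2}q_i^2 n$ is thus fixed, independent of $|N_{\vO_i}(v)|$, and no case distinction is needed. (Your splitting $(q_{i+1}/q_i)|N_{\vO_i}(v)| - \sigma^{3/2}q_i|N_{\vO_i}(v)|$ and bounding only the first term created a phantom problem.)

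Second, and this is the real gap: the edge-effect claim $c_e\le 1$ for the $\Gamma$-indicators is false. The quantity $X=\sum_{w\in N_{\vO_i}(v)}\indic{vw\notin\cC_{i+1}}$ is not a sum of the underlying independent Bernoullis; the indicator $\indic{vw\notin\cC_{i+1}}$ depends on $\indic{vw\notin\cC^1_{i+1}}$, which in turn depends on the whole collection $(\indic{e\in\Gamma_{i+1}})_{e\in\cY_{vw}(i)}$. Toggling a single $\indic{e\in\Gamma_{i+1}}$ can simultaneously change $\indic{vw\in\cC^1_{i+1}}$ for every $vw\in\fX_v:=\{v\}\times N_{\vO_i}(v)$ with $e\in\cY_{vw}(i)$, i.e.\ for every $vw\in\cY_e(i)\cap\fX_v$. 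So the correct bound is $c_e\le|\cY_e(i)\cap\fX_v|\le|\cY_e(i)|\le 2q_i\pi_i\sqrt{n}$, which is far from $1$. (The stabilization-effect $\hat c_e\le 1$ is fine, since $\cW_{i+1}$ enters $\cC_{i+1}$ edge-by-edge.) With $c_e\le 1$ you get $\lambda=O(q_i^2 n)$, which is too optimistic. The paper instead uses the worst-case $c_e\le 2q_i\pi_i\sqrt n$ together with \refL{summinglemma} to control $\sum_e c_e^2$, arriving at $\lambda=O(\sigma q_i^2 n^{3/2})$; the deviation $\sigma^{3/2}q_i^2 n$ against this $\lambda$ still gives exponent $\Theta(\sigma^2 q_i^2\sqrt{n})\ge\Theta(\sigma^2 n^\tau)\gg\log n$ by~\eqref{ai7}, so the union bound over $v$ closes. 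This correct bookkeeping of the edge-effects via $\cY_e(i)$ and \refL{summinglemma} is the technical heart of the lemma, and it is exactly what your proof is missing.
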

\begin{proof}
We start with $|N_{\vO_{i+1}}(v)|$.  
Note that $\vO_{i+1} \subseteq \vO_i \setminus \cC_{i+1}$ implies 
\begin{equation}\label{def:X1O}
|N_{\vO_{i+1}}(v)|\le\sum_{w\in N_{\vO_i}(v)}\indic{vw\not\in\cC_{i+1}} =:X.
\end{equation}
Since $\cG_{\le i} \subseteq \cN_{i}$ implies $|N_{\vO_i}(v)|\le\cq_i n$, using \refL{lemma1} we obtain 
\begin{equation}\label{eq:EXO}
\E X 
 = \sum_{w\in N_{\vO_i}(v)}\P(vw\not\in\cC_{i+1}) 
\le |N_{\vO_i}(v)| \cdot(\cq_{i+1}/\cq_i-\csig^{3/2}\cq_i)
\le \cq_{i+1}n-\csig^{3/2}\cq_i^2n.
\end{equation}
Gearing up to apply \refT{thm:BDI} to~$X$, 
we now bound the associated parameter $\lambda \le p \sum_{e \in \vO_i} c_e^2  + q_i \sum_{e \in \vO_i} \hc_e^2$ from~\eqref{eq:l}.
Set $\fX_v:=\{v\} \times  N_{\vO_i}(v) \subseteq \vO_i$, and recall that~$\cC_{i+1} = \cC^1_{i+1} \cup \cW_{i+1}$, 
where~$\cC^1_{i+1}$ depends only on~$\Gamma_{i+1}$ and thus is independent of~$\cW_{i+1}$. 
The edge-effect~$c_e$ 
(an upper bound on how much~$X$ changes if we alter whether~$e \in \Gamma_{i+1}$ or~$e \not\in \Gamma_{i+1}$) 
is thus at most the number of changes to 
$\cC^1_{i+1} \cap \fX_v=\set{vw \in \fX_v : \cY_{vw}(i) \cap \Gamma_{i+1} \neq \emptyset}$. 
Since $e \in \cY_{vw}(i)$ implies $vw \in \cY_e(i)$ when $vw \in \fX_v$, 
we infer $c_e \le |\cY_e(i)\cap \fX_v |\le |\cY_e(i)|\le 2\cq_i\cpi_i\sqrt{n}$ by~$\cG_{\le i} \subseteq \cP_i$. 
Using \refL{summinglemma}, $|\fX_v|  = |N_{\vO_i}(v)| \le q_i n$, and $\cq_i\cpi_i^2 \le 1$ (see~\eqref{ai1}), it follows that
\begin{equation}\label{eq:SL1}
p\sum_{e \in \vO_i}c_e^2 
\le p \cdot 2\cq_i\cpi_i\sqrt{n}\cdot \sum_{e \in \vO_i}|\cY_e(i)\cap \fX_v| 
\le \csig/\sqrt{n} \cdot (2\cq_i\cpi_i\sqrt{n})^2 \cdot |\fX_v|
\le 4 \csig \cq_i^3\cpi_i^2 n^{3/2}
\le 4 \csig  \cq_i^2 n^{3/2} .
\end{equation}
By our above discussion, the stabilization-effect~$\hat{c}_e$ 
(an upper bound on how much~$X$ changes if we alter whether~$e \in \cW_{i+1}$ or~$e \not\in \cW_{i+1}$) 
is at most the number of changes to $\cW_{i+1} \cap \fX_v$. 
Hence $\hat{c}_e \le\indic{e\in\fX_v}$, so that 
\[
q_i\sum_{e \in \vO_i} \hat{c}_e^2\le q_i \cdot |\fX_v| \le \cq_i^2 n \ll \csig \cq_i^2 n^{3/2} .
\] 
Noting that~$X$ is a 
decreasing function of the edge-indicators~$(\indic{e \in \Gamma_{i+1}}, \indic{e \in \cW_{i+1}})_{e \in \vO_i}$, 
using \refT{thm:BDI} together with the $\lambda$--bound~\eqref{eq:l} 
and $q_i^2 n^{1/2} \ge n^{\tau}$ (see~\eqref{ai7}) it follows that  
\[
\P(|N_{\vO_{i+1}}(v)| \ge \cq_{i+1}n)
\le \P(X\ge \E X + \csig^{3/2}\cq_i^2n)
\le \exp\lrpar{-\frac{\csig^{3}\cq_i^4n^2}{2 \cdot 5\csig \cq_i^2  n^{3/2}}}
\le n^{-\omega(1)} .
\]
Taking a union bound over all vertices~$v$ completes the proof for the $|N_{\vO_{i+1}}(v)|$ variables.

Finally, note that $|N_{\Gamma_{i+1}}(v)|$ is a sum of independent Bernoulli random variables with 
\[
\E |N_{\Gamma_{i+1}}(v)| = |N_{\vO_i}(v)| \cdot p \le \cq_i n \cdot \csig/\sqrt{n}=\csig\cq_i\sqrt{n} = : \mu , 
\]
where we used $\cG_{\le i} \subseteq \cN_i$ to bound $|N_{\vO_i}(v)| \le \cq_i n$. 
Applying standard Chernoff bounds (see, e.g., Remark~\ref{rem:chernoff}), 
using $q_i \sqrt{n} \ge n^{\tau}$ (see~\eqref{ai7}) 
it is routine to deduce that $\mu \gg \log n$ and 
\begin{equation*}\label{eq:NGammai}
\P(|N_{\Gamma_{i+1}}(v)| \ge 2\csig\cq_i\sqrt{n}) 
= \P(|N_{\Gamma_{i+1}}(v)| \ge 2\mu) 
\le \exp\lrpar{-\frac{\mu^2}{2 \cdot 2\mu}}
= \exp\lrpar{-\frac{\mu}{4}}
\le n^{-\omega(1)} .
\end{equation*}
Taking a union bound over all vertices~$v$ completes the proof for the $|N_{\Gamma_{i+1}}(v)|$ variables.  
\end{proof}

\subsection{Event $\cP_{i+1}$: codegree-like variables $|\cX_{uv}(i+1)|$, $|\cY_{uv}(i+1)|$ and $|\cZ_{uv}(i+1)|$}\label{sec:codegree}
\begin{lemma}\label{lem:Pro2}
We have 
$\P(\neg \cP_{i+1})\le n^{-\omega(1)}$. 
\end{lemma}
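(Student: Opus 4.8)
The plan is to bound $\P(\neg \cP_{i+1})$ by separately controlling each of the three codegree-like variables $|\cX_{uv}(i+1)|$, $|\cY_{uv}(i+1)|$, and $|\cZ_{uv}(i+1)|$, then taking a union bound over all pairs $u \neq v$. For $|\cX_{uv}(i+1)| = |N_{\vO_{i+1}}(u) \cap N_{\vO_{i+1}}(v)|$, I would first observe that $\vO_{i+1} \subseteq \vO_i \setminus \cC_{i+1}$ gives the upper bound $|\cX_{uv}(i+1)| \le \sum_{w \in \cX_{uv}(i)} \indic{uw \notin \cC_{i+1}} \indic{vw \notin \cC_{i+1}}$. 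Using $\cG_{\le i} \subseteq \cP_i$ we have $|\cX_{uv}(i)| \le q_i^2 n$, and since the events $\{uw \notin \cC_{i+1}\}$ and $\{vw \notin \cC_{i+1}\}$ each have probability essentially $q_{i+1}/q_i$ by \refL{lemma1} (and are negatively correlated, or at worst only mildly positively correlated — one should check whether $\cY_{uw}(i)$ and $\cY_{vw}(i)$ can share edges), the conditional expectation is at most roughly $q_i^2 n \cdot (q_{i+1}/q_i)^2 \le q_{i+1}^2 n$, with a bit of room to spare from the $\csig^{3/2}q_i$ slack in \refL{lemma1}. Then apply \refT{thm:BDI} to the decreasing function $X$: the edge-effect $c_e$ is bounded by $|\cY_e(i) \cap \fX_u| + |\cY_e(i) \cap \fX_v| \le 2|\cY_e(i)| \le 4 q_i \pi_i \sqrt{n}$ where $\fX_u = \{u\} \times \cX_{uv}(i)$ etc., and \refL{summinglemma} controls $\sum_e c_e^2$; the stabilization-effect $\hc_e$ is bounded by $\indic{e \in \fX_u} + \indic{e \in \fX_v}$. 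The resulting $\lambda$ is of order $\csig q_i^3 \pi_i^2 n^{3/2} \cdot (\text{small})$ and the deviation $\csig^{3/2} q_i^3 n$ squared over $\lambda$ is $\gg \log n$ by \eqref{ai7} (using $q_i^3 n^{1/4}/\sqrt{I} \ge n^\tau$ or similar), giving $n^{-\omega(1)}$.

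For $|\cY_{uv}(i+1)|$, recall from \eqref{def:Yuv} that $\cY_{uv}(i+1) = \{uw \in \vO_{i+1}: vw \in \vE_{i+1}\} \cup \{vw \in \vO_{i+1}: uw \in \vE_{i+1}\}$. The subtlety here is that this set can grow relative to $\cY_{uv}(i)$: an edge $uw \in \vO_i$ with $vw \in \Gamma_{i+1} \subseteq \vO_i$ becomes a new element (provided $uw$ survives into $\vO_{i+1}$, which requires $uw \notin \cC_{i+1}$, in particular $uw \notin \cC^1_{i+1}$, hence $vw \notin \cY_{uw}(i) \cap \Gamma_{i+1}$ — but $vw \in \cY_{uw}(i)$ precisely when... one must be careful). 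I would write $|\cY_{uv}(i+1)| \le |\cY_{uv}(i)| + |\{uw \in \vO_i: vw \in \Gamma_{i+1}\}| + |\{vw \in \vO_i: uw \in \Gamma_{i+1}\}|$, drop the surviving-into-$\vO_{i+1}$ constraint for the upper bound, and note the two new terms are $|N_{\Gamma_{i+1}}(v) \cap N_{\vO_i \cap \cdots}(u)|$-type quantities. Actually the cleanest route: $|\{uw \in \vO_i : vw \in \Gamma_{i+1}\}| \le |N_{\Gamma_{i+1}}(v)|$, which is $\le 2\csig q_{i-1}\sqrt{n}$ on $\cN_{i+1}$ — but we can't assume $\cN_{i+1}$ here. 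Instead bound it directly: it is a sum of independent Bernoullis with mean $\le p |\cX_{uv}(i)| \cdot (\text{something})$... rather, mean $\le p \cdot |\{w: uw \in \vO_i, vw \in \vO_i\}| = p|\cX_{uv}(i)| \le p q_i^2 n = \csig q_i^2 \sqrt{n}$. So the new contribution has expectation at most $2\csig q_i^2 \sqrt{n}$, and by Chernoff (using $q_i^2 \sqrt{n} \ge n^\tau$, so the mean is $\gg \log n$) it is at most, say, $3\csig q_i^2 \sqrt{n} \le \sqrt{\csig} \cdot q_i \sqrt{n} \cdot (q_i \sqrt{\csig})$; combined with $|\cY_{uv}(i)| \le 2 q_i \pi_i \sqrt{n}$ on $\cP_i$ and the increment $\pi_{i+1} - \pi_i = \csig q_i$ together with $q_{i+1} \approx q_i$ from \eqref{ai5}, this should fit inside the target bound $2 q_{i+1} \pi_{i+1} \sqrt{n}$ since $\pi_{i+1} \ge \pi_i + \csig q_i$ provides exactly the extra budget $2\csig q_i \cdot q_i \sqrt{n}$ which dominates the fluctuation. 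One also needs to handle the deterministic loss $|\cY_{uv}(i)| \setminus \cY_{uv}(i+1)$ going the wrong way, but for the upper bound that only helps.

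For $|\cZ_{uv}(i+1)| = |N_{\vE_{i+1}}(u) \cap N_{\vE_{i+1}}(v)|$, recall $\vE_{i+1} = \vE_i \cup \Gamma_{i+1}$, so $|\cZ_{uv}(i+1)| \le |\cZ_{uv}(i)| + |\{w : uw \in \Gamma_{i+1}, vw \in \vE_i\}| + |\{w: vw \in \Gamma_{i+1}, uw \in \vE_i\}| + |\{w : uw, vw \in \Gamma_{i+1}\}|$. On $\cP_i$ the first term is $\le i (\log n)^9$. The second is a sum of independent Bernoullis over $w \in N_{\vE_i}(v)$ with $uw \in \vO_i$: its mean is $\le p \cdot |N_{\vE_i}(v)| \le p \cdot q_{i-1} n$-type... more carefully, $|N_{\vE_i}(v)| \le |N_{\vO_0}(v)| $ is not small, but the relevant count is $|\{w : uw \in \vO_i, vw \in \vE_i\}| = |\cY_{uv}(i)|$-flavored (actually it is exactly $|\{vw \in \vE_i : uw \in \vO_i\}|$, which is a piece of the mixed codegree), bounded by $2 q_i \pi_i \sqrt{n}$ on $\cP_i$; so the mean is $\le p \cdot 2 q_i \pi_i \sqrt{n} = 2\csig q_i \pi_i = O(\csig)$, hence by Chernoff this term is $O((\log n)^2)$ say (a crude "small mean, so tail is at most $(\log n)^2$ with probability $n^{-\omega(1)}$" bound, or even $O(\log n)$). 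The third is symmetric, and the fourth, $|\{w : uw, vw \in \Gamma_{i+1}\}|$, is a sum over $w \in \cX_{uv}(i)$ of products $\indic{uw \in \Gamma_{i+1}}\indic{vw \in \Gamma_{i+1}}$ with mean $\le p^2 q_i^2 n = \csig^2 q_i^2 = O(1)$, so this is $O(\log n)$ with probability $n^{-\omega(1)}$ by a Chernoff/Poisson-tail argument (or \refT{thm:UT}). Adding up, $|\cZ_{uv}(i+1)| \le i(\log n)^9 + O((\log n)^2) \le (i+1)(\log n)^9$ comfortably.

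The main obstacle I anticipate is the $|\cY_{uv}(i+1)|$ analysis: unlike $|\cX|$ and $|N_{\vO}|$ which only shrink, the mixed codegree genuinely grows each step, so one must verify that the deterministic drift (captured by $\pi_{i+1} = \pi_i + \csig q_i$) exactly absorbs the random fluctuation of the newly-added edges — this requires matching the Chernoff error term against the $2\csig q_i^2 \sqrt{n}$ budget, which is tight up to the $\sqrt{\csig}$ cushion, and also requires care that edges leaving $\vO_i$ (via $\cC_{i+1}$) don't spoil the bookkeeping (they only help the upper bound, so one drops them). A secondary concern is correctly handling the potential positive correlation between $\{uw \notin \cC_{i+1}\}$ and $\{vw \notin \cC_{i+1}\}$ in the $|\cX|$ computation — if $\cY_{uw}(i)$ and $\cY_{vw}(i)$ overlap, a single edge of $\Gamma_{i+1}$ could close both; this can be absorbed into the $\csig^{3/2} q_i$ slack from \refL{lemma1}, or sidestepped by noting the overlap is bounded by a codegree-type quantity that is itself controlled on $\cP_i$. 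In all three cases, the concentration step is a routine application of \refT{thm:BDI} (for $|\cX|$, via a decreasing function) or standard Chernoff bounds / \refT{thm:UT} (for the grow-by-a-little-bit increments in $|\cY|$ and $|\cZ|$), with \refL{summinglemma} and the estimates of \refL{lem:aux} — especially \eqref{ai1}, \eqref{ai5}, and \eqref{ai7} — doing the arithmetic heavy lifting, followed by a union bound over the $\binom{n}{2}$ pairs.
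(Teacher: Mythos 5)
Your treatment of $|\cX_{uv}(i+1)|$ and $|\cZ_{uv}(i+1)|$ is essentially the paper's approach. For $|\cX|$ you set up the same indicator sum and correctly flag the correlation between $\{uw\notin\cC_{i+1}\}$ and $\{vw\notin\cC_{i+1}\}$; the paper resolves it exactly as you guess, by computing $\P(uw\notin\cC_{i+1}\text{ and }vw\notin\cC_{i+1}) = \P(uw\notin\cC_{i+1})\P(vw\notin\cC_{i+1})(1-p)^{-|\cY_{uw}(i)\cap\cY_{vw}(i)|}$ and observing $|\cY_{uw}(i)\cap\cY_{vw}(i)|\le|\cZ_{uv}(i)|\le I(\log n)^9$, which is harmless since $pI(\log n)^9\ll\csig^{3/2}q_i^3$ by~\eqref{ai7}. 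For $|\cZ|$ your one-step increment $\Delta Z$ and small-mean Chernoff bound match the paper exactly.

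The $|\cY_{uv}(i+1)|$ argument, however, has a genuine gap. You propose to bound $|\cY_{uv}(i+1)|\le|\cY_{uv}(i)|+(\text{newly created edges})$, dropping the constraint that old mixed-codegree edges must also survive into $\vO_{i+1}$, on the grounds that this loss "only helps." In fact the loss is not dispensable: it is the only reason the inductive step closes. On $\cP_i$ your bound becomes $2q_i\cpi_i\sqrt{n}+2\csig q_i^2\sqrt{n}+\text{(fluctuation)}$, which must sit below the target $2q_{i+1}\cpi_{i+1}\sqrt{n}=2q_{i+1}\cpi_i\sqrt{n}+2\csig q_i q_{i+1}\sqrt{n}$. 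The surplus is $2(q_i-q_{i+1})(\cpi_i+\csig q_i)\sqrt{n}$, and by~\eqref{ai4} we have $q_i-q_{i+1}\approx 2\csig q_i^2\cpi_i$, so the surplus is $\approx 4\csig q_i^2\cpi_i^2\sqrt{n}$ --- the same order as, and once $\cpi_i>1/\sqrt{2}$ strictly larger than, the entire budget $2\csig q_i q_{i+1}\sqrt{n}$ provided by the drift $\cpi_{i+1}-\cpi_i=\csig q_i$. Equivalently, $q_i\cpi_i$ behaves like $e^{-x^2}x$, which is eventually decreasing in $i$, whereas your naive upper bound is nondecreasing, so the invariant must fail long before $i=I$. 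The paper handles this by decomposing $|\cY_{uv}(i+1)|\le Y_{uv}^+ + Y_{uv}^*$ with $Y_{uv}^*:=\sum_{f\in\cY_{uv}(i)}\indic{f\notin\cC_{i+1}}$: by \refL{lemma1} this gives $\E Y_{uv}^*\le(q_{i+1}/q_i-\csig^{3/2}q_i)|\cY_{uv}(i)|$, and the multiplicative shrinkage factor $q_{i+1}/q_i$ on the old part supplies exactly the missing $2(q_i-q_{i+1})\cpi_i\sqrt{n}$. Concentrating $Y_{uv}^*$ then needs the paper's refined edge-effect bound $c_e\le\max\{4,\,I(\log n)^9\}$ for $e\neq uv$ (rather than the crude $|\cY_e(i)|$), a further piece your proposal would also have to supply.
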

\begin{proof}
We start with~$|\cX_{uv}(i+1)|$. Recalling $\vO_{i+1} \subseteq \vO_i \setminus \cC_{i+1}$, note that by construction 
\begin{equation}\label{def:X}
|\cX_{uv}(i+1)|\le\sum_{w\in\cX_{uv}(i)}\indic{uw\not\in\cC_{i+1} \text{ and } vw\not\in\cC_{i+1}}=:X.
\end{equation}
To estimate $\E X$, note that the event $f \not\in \cC^1_{i+1} = \set{f \in \vO_i : \cY_f(i) \cap \Gamma_{i+1} \neq \emptyset}$ 
is determined by the values of the independent indicator variables $(\indic{e \in \Gamma_{i+1}})_{e \in \cY_f(i)}$. 
In view of the reasoning~\eqref{ob1} for the value of~$\P(e\not\in\cC_{i+1})$, it follows by construction of~$\cC_{i+1} = \cC^1_{i+1} \cup \cW_{i+1}$ that
\begin{equation}\label{eq:PCond} 
\P(uw\not\in\cC_{i+1} \text{ and } vw\not\in\cC_{i+1}) 
 = \P(uw\not\in\cC_{i+1}) \P(vw\not\in\cC_{i+1}) \cdot (1-p)^{-|\cY_{uw}(i) \cap \cY_{vw}(i)|} .
\end{equation}
Since $\cG_{\le i}\subseteq \cP_i$ implies $|\cY_{uw}(i) \cap \cY_{vw}(i)| \le |\cZ_{uv}(i)| \le I (\log n)^9 $ and $|\cX_{uv}(i)| \le \cq_i^2n$,
by combining~\eqref{eq:PCond} with \refL{lemma1} it follows that 
\begin{equation}\label{eq:Xuv}
\E X \le |\cX_{uv}(i)| \cdot (\cq_{i+1}/\cq_{i}-\csig^{3/2}\cq_{i})^2 \cdot (1-p)^{-I(\log n)^9}  \le \cq_{i+1}^2n-\csig^{3/2}\cq_i^3n, 
\end{equation}
where for the last inequality we used $pI(\log n)^9 \ll \csig^{3/2}\cq_i^3 \ll 1$ (since $q_i^3 \sqrt{n}/I \ge n^{\tau}$ by~\eqref{ai7}) and 
$\csig^3\cq_i^4\ll \csig^{3/2}\cq_i^3 \sim \csig^{3/2}\cq_{i+1}\cq_i^2$ (see~\eqref{ai1}--\eqref{ai5}). 
With an eye on \refT{thm:BDI}, we now bound the parameter $\lambda \le p \sum_{e \in \vO_i} c_e^2  + q_i\sum_{e \in \vO_i} \hc_e^2$ from~\eqref{eq:l}.
Set $\fX_{uv}:=\{u,v\}\times \cX_{uv}(i) \subseteq \vO_i$. 
Analogous to the proof of \refL{lem:Pro1} for $|N_{\vO_{i+1}}(v)|$, 
here we have edge-effect $c_e \le |\cY_e(i)\cap \fX_{uv}|\le |\cY_e(i)| \le 2\cq_i\cpi_i\sqrt{n}$ and stabilization-effect $\hat{c}_e \le\indic{e\in\fX_{uv}}$.
Similar to~\eqref{eq:SL1}, using \refL{summinglemma}, $|\fX_{uv}| = 2 \cdot |\cX_{uv}(i)| \le 2 \cq_i^2n$ and $\cq_i\cpi_i^2 \le 1$ it follows that 
\begin{equation}\label{eq:SL2}
p \sum_{e \in \vO_i} c_e^2 \le \csig/\sqrt{n} \cdot (2\cq_i\cpi_i\sqrt{n})^2 \cdot |\fX_{uv}|\le 8\csig\cq_i^4\cpi_i^2n^{3/2}\le 8\csig\cq_i^3 n^{3/2} .
\end{equation}
Furthermore, $q_i\sum \hat{c}_e^2\le q_i|\fX_{uv}|\le 2\cq_i^3n \ll \sigma \cq_i^3 n^{3/2}$. 
Noting that~$X$ 
is a decreasing function of the edge-indicators~$(\indic{e \in \Gamma_{i+1}}, \indic{e \in \cW_{i+1}})_{e \in \vO_i}$, 
using \refT{thm:BDI} and $q_i^3 n^{1/2} \ge n^{\tau}$ (see~\eqref{ai7}) it follows that 
\[\P(|X_{uv}(i+1)| \ge \cq_{i+1}^2n)\le\P(X \ge \E X + \csig^{3/2}\cq_i^3n)\le\exp\lrpar{-\frac{\csig^3\cq_i^6n^2}{2 \cdot 9\csig\cq_i^3 n^{3/2}}} \le n^{-\omega(1)} . \] 
Taking a union bound over all pairs of vertices~$u,v$ completes the proof for the $|\cX_{uv}(i+1)|$ variables.

Turning to the more involved $|\cY_{uv}(i+1)|$ variables, note that by construction 
\begin{equation}\label{def:Y}
|\cY_{uv}(i+1)| \le \sum_{w\in \cX_{uv}(i)}\indic{uw\in\Gamma_{i+1}\text{ or } vw\in\Gamma_{i+1}} + \sum_{f \in \cY_{uv}(i)}\indic{f\not\in\cC_{i+1}} =: Y_{uv}^+ + Y_{uv}^*.
\end{equation}
(To clarify: $Y_{uv}^+$ and $Y_{uv}^*$ are defined by the first and second sum in~\eqref{def:Y}, respectively.) 
Using \refL{lemma1} 
together with $\csig \cq_i^2 = \csig \cq_i\cq_{i+1} + o(\csig^{3/2}\cq_i^2 \cpi_i)$ (see~\eqref{ai5}) 
and $\cpi_i\cq_{i+1} = \cq_{i+1}\cpi_{i+1}-\csig\cq_i\cq_{i+1}$ (as~$\pi_{i+1} = \pi_i + \csig \cq_i$ by~\eqref{def:pii}), it follows that 
\begin{equation}
\label{eq:EY}
\begin{split}
\E(Y_{uv}^+ + Y_{uv}^*)
&\le |\cX_{uv}(i)| \cdot 2p \: + \: |\cY_{uv}(i)| \cdot (\cq_{i+1}/\cq_i-\csig^{3/2}\cq_i)\\
& \le 2\csig\cq_i^2 \sqrt{n} + 2\cpi_i\sqrt{n}(\cq_{i+1}-\csig^{3/2}\cq_i^2) 
\le 2\cq_{i+1}\cpi_{i+1}\sqrt{n} -\csig^{3/2}\cq_i^2\cpi_i\sqrt{n} .	
\end{split}
\end{equation}
We now estimate $Y_{uv}^+$ and $Y_{uv}^*$ separately. 
Noting $\E Y^+_{uv} \le 2\csig\cq_i^2\sqrt{n}$ and $\csig^{2}\cq_i^2\cpi_i\sqrt{n} = o(\csig\cq_i^2\sqrt{n})$ (see~\eqref{ai1}), 
using standard Chernoff bounds together with $\cpi_i^2 \ge \cpi_0^2=\csig^2$ 
and $q_i^2 \sqrt{n} \ge n^{\tau}$ (see~\eqref{ai7}) it follows that 
\begin{equation}\label{Y^+}
\P(Y_{uv}^+ \ge \E Y_{uv}^+ + \csig^{2}\cq_i^2\cpi_i\sqrt{n})
\le \exp\lrpar{-\frac{\bigpar{\csig^{2}\cq_i^2\cpi_i\sqrt{n}}^2}{4 \cdot 2\csig\cq_i^2\sqrt{n}}}
\le \exp\lrpar{-\frac{\csig^5\cq_i^2 \sqrt{n}}{8}}
\le n^{-\omega(1)} . 
\end{equation}
For $Y_{uv}^*$ we shall apply \refT{thm:BDI}, 
and we thus now bound $\lambda \le p \sum_{e \in \vO_i} c_e^2  + q_i\sum_{e \in \vO_i} \hc_e^2$ from~\eqref{eq:l}. 
As usual, we have edge-effect $c_e \le |\cY_e(i)\cap \cY_{uv}(i)|\le |\cY_e(i)| \le 2\cq_i\cpi_i\sqrt{n}$ and stabilization-effect $\hat{c}_e \le\indic{e\in \cY_{uv}(i)}$.
Here we can significantly improve the simple worst case estimate $c_e\le |\cY_e(i)|$ when $e \neq uv$. 
Indeed, if $e=w_1w_2$ does not intersect $uv$, then $c_e \le 4$ since $\cY_e(i)\cap \cY_{uv}(i) \subseteq \{u,v\} \times \{w_1,w_2\}$, say. 
Furthermore, if $e=w_1w_2$ intersects $uv$ in one vertex, say $u=w_1$, then $c_e \le \max_f |\cZ_{f}(i)| \le I (\log n)^9$ since $\cY_e(i)\cap \cY_{uv}(i) \subseteq \{u\} \times [ N_{E_i}(w_2) \cap  N_{E_i}(v)]$. 
To sum up, for $e \neq uv$ we have $c_e \le \max\{4,I (\log n)^9\} \le \sigma^{-5} I$, say. 
Similar to~\eqref{eq:SL1} and~\eqref{eq:SL2}, using \refL{summinglemma} and $|\cY_{uv}(i)| \le 2\cq_i\cpi_i\sqrt{n}$  it follows that 
\begin{equation*}
p \sum_{e \in \vO_i} c_e^2\le \sigma/\sqrt{n} \cdot \Bigpar{(2\cq_i\cpi_i\sqrt{n})^2 +  \sigma^{-5} I 
\cdot 2\cq_i\cpi_i\sqrt{n} \cdot |\cY_{uv}(i)|} \le 8 \csig^{-4} \cq_i^2 \cpi_i^2 I\sqrt{n}.
\end{equation*}
Furthermore, using $\cpi_i \ge \csig$ and $I \ge 1$ we obtain $q_i\sum_{e \in \vO_i} \hat{c}_e^2 \le q_i|\cY_{uv}(i)| \le 2\cq_i^2\cpi_i\sqrt{n} \ll \csig^{-4} q_i^2 \cpi_i^2 I \sqrt{n}$.
Noting that~$Y^*_{uv}$ 
is decreasing, using \refT{thm:BDI} 
and $\cq_i^2\sqrt{n}/I \ge n^{\tau}$ (see~\eqref{ai7})
it follows that 
\begin{equation}\label{Y^*}
\P(Y_{uv}^* \ge \E Y_{uv}^* + \csig^{2}\cq_i^2\cpi_i\sqrt{n})
\le \exp\lrpar{-\frac{\csig^4\cq_i^4\cpi_i^2 n}{2 \cdot 9 \csig^{-4} \cq_i^2 \cpi_i^2 I\sqrt{n}}} \le n^{-\omega(1)}. 
\end{equation}
Combining the probability estimates~\eqref{Y^+} and~\eqref{Y^*} with inequalities~\eqref{def:Y}--\eqref{eq:EY} and $\csig^2 \ll \csig^{3/2}$, 
now a union bound argument (to account for all pairs of vertices~$u,v$) completes the proof for the $|\cY_{uv}(i+1)|$ variables.

Finally, for $|\cZ_{uv}(i+1)|$ note that the one-step difference
\begin{equation}\label{def:Zdiff}
\Delta Z := |\cZ_{uv}(i+1)|-|\cZ_{uv}(i)| = \sum_{w\in\cX_{uv}(i)}\indic{uw \in \Gamma_{i+1} \text{ and } vw \in \Gamma_{i+1}} + \sum_{f \in \cY_{uv}(i)}\indic{f \in \Gamma_{i+1}}
\end{equation}
is a sum of independent Bernoulli random variables with
\begin{equation}\label{eq:EZdiff}
\E(\Delta Z) = |\cX_{uv}(i)| \cdot p^2 + |\cY_{uv}(i)| \cdot p \le \csig^2 \cq_i^2 + 2\csig\cq_i\pi_i \le 3 \csig \ll 1 ,
\end{equation}
where we used $|\cX_{uv}(i)|\le \cq_i^2n$ and $|\cY_{uv}(i)|\le 2\cq_i\cpi_i\sqrt{n}$ for the first inequality, 
and $\max\set{\cq_i^2, \cq_i\cpi_i} \le 1$ (see~\eqref{ai1}) and $\csig \ll 1$ for the last two inequalities. 
Inspecting~\eqref{def:Zdiff}, note that $\cG_{\le i}\subseteq \cP_i$ implies $|\cZ_{uv}(i+1)| \le \Delta Z + i (\log n)^9$. 
Applying standard Chernoff bounds, using $\E(\Delta Z) \ll 1$ it readily follows that, say, 
\begin{equation*}
\P\bigpar{|\cZ_{uv}(i+1)| \ge (i+1) (\log n)^9} \le 
\P\bigpar{\Delta Z \ge (\log n)^9} \le n^{-\omega(1)}. 
\end{equation*}
Taking a union bound over all pairs of vertices~$u,v$ completes the proof for the $|\cZ_{uv}(i+1)|$ variables. 
\end{proof}

\begin{remark}\label{rem:codegree}%
If desired, it would not be difficult to establish the better upper bound~$|\cZ_{uv}(i)| \le (\log n)^2$, say
(using the stochastic domination arguments leading to~\eqref{eq:XUT} in \refS{sec:P5}; 
in view~\eqref{def:Zdiff}--\eqref{eq:EZdiff} the main point is that, for~$0 \le i \le I$, the event 
$\cG_{\le i}$ implies $\sum_{0 \le j \le i}(|\cX_{uv}(j)|p^2 + |\cY_{uv}(j)|p) = O(\log n)$). 
This in turn could, e.g., be used to increase the constant~$\beta_0$ slightly  
(as we could then remove~$I=\ceil{n^{\beta}}$ from constraint~\eqref{ai7}). 
\end{remark}

\subsection{Event $\cQ^+_{i+1} \cap\cQ_{i+1}$: number $|\vO_{i+1}(A,B)|$ of open edges between large sets}\label{sec:P3P4} 
Turning to $|\vO_{i+1}(A,B)|$, 
note that one edge~$e \in \Gamma_{i+1}$ can add up to $|Y_{e}(i) \cap \vO_i(A,B)| \le \sum_{w \in e}|N_{\vE_i}(w) \cap (A \cup B)|$ edges 
to $\cC^1_{i+1}(A,B) \subseteq \vO_i(A,B) \setminus \vO_{i+1}(A,B)$, 
which can potentially lead to large edge-effects~$c_{e}$. 
To sidestep such technical difficulties, 
we now introduce the following auxiliary variables for vertex-sets~$A, B \subseteq V$ with~$|A|=|B|$ 
(to avoid clutter we suppress the dependence on $A,B,i$ in parts of our notation): 
\begin{align*}
z &:= \csig^4\cq_i^2|A|, \\
W_1 &:= \set{w \in V : \: |N_{\vE_i}(w)\cap(A\cup B)|\ge z} , \\
W_2 &:=\set{w \in V : \: |N_{\Gamma_{i+1}}(w)\cap (A\cup B)| \ge  z},  \\
\chC^1_{i+1} &:= \set{uv\in\vO_i : \: \text{there is $w \not\in W_1$ s.t. } |\{uw,vw\} \cap \Gamma_{i+1}|=|\{uw,vw\} \cap \vE_i|=1}, \\
\chC^2_{i+1} &:= \set{uv\in\vO_i : \: \text{there is $w \not\in W_2$ s.t. } uw\in\Gamma_{i+1}, \: vw\in\Gamma_{i+1}} , \\
\chC_{i+1} &:= \chC^1_{i+1} \cup \cW_{i+1}.
\end{align*}
Note that $\chC^j_{i+1} \subseteq \cC^j_{i+1}$ for $j \in \{1,2\}$, and that $\chC_{i+1} \subseteq \cC_{i+1}$. 
Furthermore, recalling~$q_i \ge q_I$ (see~\eqref{ai5}), 
using inequality~\eqref{ai7} it is routine to check that~$s_0  \gg 1$ holds, 
that~$|A| \ge s_0$ implies~$z \gg 1$, and  moreover that
\begin{equation}\label{eq:z:frac}
\min_{|A| \ge s_0} z/\sqrt{|A| I} \ge \csig^4\cq_i^2\sqrt{s_0}/\sqrt{I} \gg \csig^6\cq_I^3 \sqrt[4]{n}/\sqrt{I} \gg n^{\tau/2} .
\end{equation}
\begin{lemma}\label{property3}
We have 
$\P(\neg\cQ^+_{i+1})\le n^{-\omega(1)}$. 
\end{lemma}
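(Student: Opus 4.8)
The plan is to reduce to disjoint sets of size exactly~$s_0$, bound $|\vO_{i+1}(A,B)|$ by a \emph{decreasing} function of the Bernoulli indicators $\bigpar{\indic{e \in \Gamma_{i+1}},\indic{e \in \cW_{i+1}}}_{e \in \vO_i}$, and then apply the monotone bounded differences inequality~\eqref{eq:BDImon}. (Throughout we assume $\cG_{\le i}$ and suppress the conditioning on~$\cF_i$, as in \refS{sec:setupconvetions}.) A double-counting argument identical to~\eqref{eq:handshake} shows that if $|\vO_{i+1}(A',B')| \le \cq_{i+1}|A'||B'|$ holds for all disjoint $A',B'\subseteq V$ with $|A'|=|B'|=s_0$, then $|\vO_{i+1}(A,B)| \le \cq_{i+1}|A||B|$ holds for all disjoint $A,B$ with $|A|,|B|\ge s_0$; since $s_0 = O(\sqrt{n\log n})$, it thus suffices (via a union bound over the at most~$n^{2s_0}$ such pairs) to show, for each \emph{fixed} disjoint pair $A,B$ of size~$s_0$, that $\P(|\vO_{i+1}(A,B)| > \cq_{i+1}|A||B|) \le e^{-\omega(s_0\log n)}$. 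Fix such a pair. Using $\chC_{i+1} = \chC^1_{i+1}\cup\cW_{i+1} \subseteq \cC^1_{i+1}\cup\cW_{i+1} = \cC_{i+1}$ together with $\vO_{i+1}\subseteq\vO_i\setminus\cC_{i+1}$ (see~\eqref{def:Ci}--\eqref{def:Oi}), we have
\begin{equation*}
|\vO_{i+1}(A,B)| \le \sum_{e \in \vO_i(A,B)}\indic{e \notin \chC_{i+1}} =: X .
\end{equation*}

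To estimate $\E X$, write $\hat{\cY}_e(i) := \set{f \in \cY_e(i): \text{the non-$e$ endpoint of $f$ lies outside $W_1$}}$, so that $\set{e \notin \chC^1_{i+1}}$ depends only on $\bigpar{\indic{f \in \Gamma_{i+1}}}_{f \in \hat{\cY}_e(i)}$ and is independent of~$\cW_{i+1}$. Arguing as in~\eqref{ob1}, and using $|\cY_e(i)| \le 2\cq_i\cpi_i\sqrt{n} < 2\cq_i(\cpi_i+\sqrt{\csig})\sqrt{n}$ from~$\cP_i$ (so the maximum in~\eqref{def:pe} is attained by the first term), we get
\begin{equation*}
\P(e \notin \chC_{i+1}) = (1-p)^{|\hat{\cY}_e(i)|}\cdot(1-\hp_{e,i}) = (1-p)^{2\cq_i(\cpi_i+\sqrt{\csig})\sqrt{n}}\cdot(1-p)^{-\Delta_e}, \quad \Delta_e := |\cY_e(i)| - |\hat{\cY}_e(i)| \ge 0 .
\end{equation*}
Since $p\Delta_e \le p|\cY_e(i)| \le 2\csig\cq_i\cpi_i = O(\csig)$ by~\eqref{ai1} and $(1-p)^{2\cq_i(\cpi_i+\sqrt{\csig})\sqrt{n}} \le \cq_{i+1}/\cq_i - \csig^{3/2}\cq_i$ by \refL{lemma1}, summing over $e \in \vO_i(A,B)$ and using $|\vO_i(A,B)| \le \cq_i|A||B|$ from~$\cQ^+_i$ gives
\begin{equation*}
\E X \le \Bigpar{\tfrac{\cq_{i+1}}{\cq_i}-\csig^{3/2}\cq_i}\cdot\Bigpar{|\vO_i(A,B)| + O\bigpar{p\textstyle\sum_{e \in \vO_i(A,B)}\Delta_e}} \le \cq_{i+1}|A||B| - \csig^{3/2}\cq_i^2|A||B| + O\bigpar{p\textstyle\sum_e\Delta_e} .
\end{equation*}
The heart of the proof is to show that the ``$W_1$-deficiency'' satisfies $p\sum_{e \in \vO_i(A,B)}\Delta_e = o\bigpar{\csig^{3/2}\cq_i^2|A||B|}$, so that $\E X \le \cq_{i+1}|A||B| - \Omega\bigpar{\csig^{3/2}\cq_i^2|A||B|}$. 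I would do this by noting that $\Delta_e$ (for $e=uv$ with $u \in A, v \in B$) counts the $w \in W_1$ with $uw\in\vE_i, vw\in\vO_i$ or $vw\in\vE_i, uw\in\vO_i$, so that swapping the order of summation
\begin{equation*}
\textstyle\sum_{e \in \vO_i(A,B)}\Delta_e = \sum_{w \in W_1}\Bigpar{\bigabs{\vO_i\bigpar{N_{\vE_i}(w)\cap A,\, N_{\vO_i}(w)\cap B}} + \bigabs{\vO_i\bigpar{N_{\vO_i}(w)\cap A,\, N_{\vE_i}(w)\cap B}}} ,
\end{equation*}
and then bounding these open-edge counts via~$\cQ^+_i$ and the codegree bound $|\cY_{uv}(i)| \le 2\cq_i\cpi_i\sqrt{n}$ from~$\cP_i$, using the choice $z = \csig^4\cq_i^2|A|$ to control~$|W_1|$ (note $\sum_w|N_{\vE_i}(w)\cap(A\cup B)| = \sum_{a \in A\cup B}|N_{\vE_i}(a)| = O(|A|\cpi_i\sqrt{n})$, since $|N_{\vE_i}(a)| \le \sum_{1\le j\le i}|N_{\Gamma_j}(a)| \le 2\cpi_i\sqrt{n}$ by the $\Gamma_j$-degree bounds in~$\cN_j$, $j \le i$, and $\cpi_i = \csig+\sum_{j<i}\csig\cq_j$). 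I expect pushing the deficiency below the available $\csig^{3/2}\cq_i^2|A||B|$ slack to be the main obstacle.

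Granting $\E X \le \cq_{i+1}|A||B| - \Omega\bigpar{\csig^{3/2}\cq_i^2 s_0^2}$, it remains to show $X$ concentrates; this is where passing from $\cC^1_{i+1}$ to~$\chC^1_{i+1}$ pays off, by making the edge-effects small. Altering $\indic{f \in \Gamma_{i+1}}$ changes at most $|\set{e \in \vO_i(A,B): f \in \hat{\cY}_e(i)}| \le 4z = O(z)$ of the summands $\indic{e \notin \chC_{i+1}}$ --- because each admissible intermediate vertex $w \notin W_1$ has $|N_{\vE_i}(w)\cap(A\cup B)| < z$ --- so we may take $c_f = O(z)$ with moreover $c_f \le |\set{e \in \vO_i(A,B): f \in \hat{\cY}_e(i)}|$; and the stabilization-effects satisfy $\hc_f \le \indic{f \in \vO_i(A,B)}$. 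Hence (using the symmetry $f \in \cY_e(i) \Leftrightarrow e \in \cY_f(i)$, \refL{summinglemma}, and $|\vO_i(A,B)| \le \cq_i|A||B|$)
\begin{equation*}
\textstyle\sum_{f \in \vO_i}c_f \le \sum_{e \in \vO_i(A,B)}|\hat{\cY}_e(i)| \le 2\cq_i\cpi_i\sqrt{n}\,|\vO_i(A,B)| \le 2\cq_i^2\cpi_i\sqrt{n}\,|A||B| ,
\end{equation*}
so that, recalling~\eqref{eq:l}, $z = \csig^4\cq_i^2|A|$ and $p = \csig/\sqrt{n}$,
\begin{equation*}
\lambda \le p\textstyle\sum_{f \in \vO_i}c_f^2 + \cq_i\sum_{f \in \vO_i}\hc_f^2 \le p\cdot O(z)\cdot 2\cq_i^2\cpi_i\sqrt{n}\,|A||B| + \cq_i|\vO_i(A,B)| = O\bigpar{\csig^5\cq_i^4\cpi_i\,s_0^3} ,
\end{equation*}
where the last step also uses~\eqref{ai7} (equivalently~\eqref{eq:z:frac}) to absorb the $\cq_i^2 s_0^2$ term. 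Applying~\eqref{eq:BDImon} to the decreasing function~$X$ with $t := \Omega\bigpar{\csig^{3/2}\cq_i^2 s_0^2}$ (the amount by which $\E X$ undershoots $\cq_{i+1}|A||B|$), the exponent $t^2/(2\lambda) = \Omega\bigpar{s_0/(\csig^2\cpi_i)}$ is $\omega(s_0\log n)$ by~\eqref{ai1} and~\eqref{def:sigma}, so $\P(X \ge \cq_{i+1}|A||B|) \le \P(X \ge \E X + t) \le e^{-\omega(s_0\log n)}$. The union bound over the at most~$n^{2s_0}$ pairs $(A,B)$ of size~$s_0$ then gives $\P(\neg\cQ^+_{i+1}) \le n^{-\omega(1)}$.
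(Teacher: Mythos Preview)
Your overall architecture matches the paper's: reduce to equal-size sets, bound $|\vO_{i+1}(A,B)|$ by $X=\sum_{e\in\vO_i(A,B)}\indic{e\notin\chC_{i+1}}$, control $\E X$, and apply the monotone bounded differences inequality with edge-effects $c_f=O(z)$. Your Lipschitz and concentration computations are fine.

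The genuine gap is exactly where you flagged the ``main obstacle'': your proposed control of the deficiency term $p\sum_{e}\Delta_e$ does not work with the tools you invoke. Your parenthetical bound $\sum_{w}|N_{\vE_i}(w)\cap(A\cup B)|=O(|A|\cpi_i\sqrt{n})$ is a degree-sum over \emph{all} vertices $w$, so it only gives $|W_1|=O(\cpi_i\sqrt{n}/(\csig^4\cq_i^2))$, and hence (using, e.g., the trivial bound $|\vO_i(N_{\vO_i}(w)\cap A,\,N_{\vE_i}(w)\cap B)|\le |A|\cdot|N_{\vE_i}(w)\cap B|$) one obtains $\sum_e\Delta_e=O(\cpi_i\sqrt{n}\,|A||B|)$ and $p\sum_e\Delta_e=O(\csig\cpi_i\,|A||B|)$. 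This must be $o(\csig^{3/2}\cq_i^2|A||B|)$, i.e.\ $\cpi_i=o(\csig^{1/2}\cq_i^2)=o((\log n)^{-1})$, which is false since $\cpi_i$ can be as large as $\cpi_I\sim\sqrt{\beta\log n}$. Invoking~$\cQ^+_i$ or the~$\cY$-codegree bound does not rescue this, because neither controls the pairwise overlaps of the sets $N_{\vE_i}(w)\cap(A\cup B)$ for $w\in W_1$.

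What the paper does instead is to avoid computing $\sum_e\Delta_e$ altogether. Writing $\cQ_f:=\{(f\times W_1)\cap\Gamma_{i+1}=\emptyset\}$, the implication ``$f\notin\chC_{i+1}$ and $\cQ_f$'' $\Rightarrow$ ``$f\notin\cC_{i+1}$'' together with Harris's inequality (both events are decreasing) gives $\P(f\notin\chC_{i+1})\le\P(f\notin\cC_{i+1})\cdot(1-p)^{-2|W_1|}$. The crucial missing input is then a \emph{sharp} bound on~$|W_1|$: the sets $N_{\vE_i}(w)\cap(A\cup B)$ for $w\in W_1$ have pairwise intersections $\le|\cZ_{uv}(i)|\le I(\log n)^9$, so the overlap lemma (\refL{lem:overlap}) with $z=\csig^4\cq_i^2|A|\gg\sqrt{|A\cup B|\cdot I(\log n)^9}$ (cf.~\eqref{eq:z:frac}) yields $|W_1|\le 2|A\cup B|/z=4/(\csig^4\cq_i^2)$, independent of~$\sqrt{n}$. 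Then $p|W_1|=O(\csig^{-3}\cq_i^{-2}/\sqrt{n})=o(\csig^{3/2}\cq_i)$ by~\eqref{ai7}, which is exactly what is needed. In short, the key idea you are missing is to use the $\cZ_{uv}$-codegree bound from~$\cP_i$ together with \refL{lem:overlap} to control~$|W_1|$ (not the $\cY$-bound or the degree sum), and the paper packages this through Harris's inequality rather than your direct expansion.
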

\begin{proof}
Mimicking the double counting argument from~\eqref{eq:handshake}, 
it follows that the special case $|A|=|B|$ of~$\cQ^+_{i+1}$ implies the event~$\cQ^+_{i+1}$ in full. 
Hence $\neg\cQ^+_{i+1}$ implies that $|\vO_{i+1}(A,B)| \le \cq_{i+1}|A||B|$ fails for some disjoint vertex-sets $A,B \subseteq V$ with~$|A|=|B| \ge s_0$, 
and we shall below estimate the probability of this special case.

Recalling $\chC_{i+1} \subseteq \cC_{i+1}$, 
noting $\vO_{i+1} \subseteq \vO_i \setminus \cC_{i+1} \subseteq  \vO_i \setminus \chC_{i+1}$ we obtain 
\begin{equation}\label{def:XO}
|\vO_{i+1}(A,B)|\le |\vO_{i}(A,B) \setminus \chC_{i+1}| = \sum_{f\in\vO_{i}(A,B)}\indic{f\not\in\chC_{i+1}} =: X .
\end{equation}
To estimate~$\E X$, recall that $\cC^1_{i+1} = \set{f \in \vO_i : \cY_f(i) \cap \Gamma_{i+1} \neq \emptyset}$. 
Note that if the event $\cQ_{f} := \set{(f \times W_1) \cap \Gamma_{i+1} = \emptyset}$ holds, 
then $f \notin \chC^1_{i+1}$ implies $f \notin \cC^1_{i+1}$, 
so that $f \notin \chC_{i+1}$ implies $f \notin \cC_{i+1}= \cC^1_{i+1} \cup \cW_{i+1}$. 
Since~$f \notin \cC^1_{i+1}$ and~$\cQ_{f}$ are both monotone decreasing functions 
of the edge-indicators~$(\indic{e \in \Gamma_{i+1}}, \indic{e \in \cW_{i+1}})_{e \in \vO_i}$, 
using Harris's inequality~\cite{Harris1960} and $\P(\cQ_{f})\ge (1-p)^{2|W_1|}$ it follows that  
\begin{equation*}
\P(f \notin \cC_{i+1}) \ge \P(f \notin \chC_{i+1} \text{ and } \cQ_{f}) 
\ge \P(f \notin \chC_{i+1}) \P(\cQ_{f})
\ge \P(f \notin \chC_{i+1}) \cdot (1-p)^{2|W_1|} .
\end{equation*}
Note that $\cG_{\le i}$ and~$i < I$ imply $|N_{\vE_i}(u)\cap  N_{\vE_i}(v)| = |\cZ_{uv}(i)| \le I(\log n)^9 =: y$  when~$u\neq v$, 
and that~\eqref{eq:z:frac} implies~$z\gg\sqrt{|A\cup B|y}$. 
Using the definition of~$W_1$ and \refL{lem:overlap} (with $\cI = W_1$, $U=A\cup B$ and $U_w = N_{\vE_i}(w)\cap U$),  
we infer $|W_1|\le 2|A\cup B|/z = 4/(\csig^4\cq_i^2) \le \cq_i\csig\sqrt{n}$ by~\eqref{ai7}, say.  
Similar to~\eqref{eq:Xuv}, using \refL{lemma1},  $|\vO_{i}(A,B)| \le \cq_i |A||B|$, $p|W_1| \le \cq_i\csig^2 \ll 1$ and $\cq_{i}\cq_{i+1} \sim q_i^2$ (see~\eqref{ai5}) it is routine to deduce that 
\begin{equation}\label{eq:OiAB:LT:E}
\begin{split}
\E X &\le |\vO_{i}(A,B)| \cdot (\cq_{i+1}/\cq_i-\csig^{3/2}\cq_i) \cdot (1-p)^{-2|W_1|} \le |A||B| \cdot (\cq_{i+1}-\csig^{3/2}\cq_i^2/2) .
\end{split}
\end{equation}
Gearing up to apply \refT{thm:BDI}, 
we now bound $\lambda \le p \sum_{e \in \vO_i} c_e^2  + q_i\sum_{e \in \vO_i} \hc_e^2$. 
Noting $\chC_{i+1} \subseteq \cC_{i+1}$, as usual we have edge-effect $c_e \le |\cY_e(i)\cap \vO_i(A,B)|$ and stabilization-effect $\hat{c}_e \le \indic{e\in\vO_i(A,B)}$. 
Here the definition of~$\chC_{i+1}$ allows us to improve the simple worst case estimate~$c_e\le |\cY_e(i)|$. 
Indeed, inspecting the corresponding argument for $|N_{\vO_{i+1}}(v)|$ from \refL{lem:Pro1}, 
we see that the edge-effect~$c_e$ (an upper bound on how much~$X$ changes if we alter whether~$e \in \Gamma_{i+1}$ or~$e \not\in \Gamma_{i+1}$) 
is at most the number of changes~to 
\begin{equation}\label{eq:chCO}
\begin{split}
\chC^1_{i+1} \cap \vO_i(A,B) = \big\{uv\in\vO_i(A,B) : \:  \text{there is $w \not\in W_1$ s.t.\ either }& uw \in \Gamma_{i+1}, \: vw \in \vE_i \\
& \text{or } vw \in \Gamma_{i+1}, \: uw \in \vE_i\bigr\}. 
\end{split}
\end{equation}
Since any~$w \not\in W_1$ has at most~$z$ neighbours in $A \cup B$ via $\vE_i$--edges,  
we infer that~$c_e \le 2z$ (the factor of two takes into account that each vertex of~$e$ could potentially play the role of~$w$ in~\eqref{eq:chCO} above). 
Similar to~\eqref{eq:SL1} and~\eqref{eq:SL2}, 
using \refL{summinglemma}, $\csig \cpi_i \le \sqrt{\csig} \ll 1$ (see~\eqref{ai1}), 
and $|\vO_i(A,B)| \le \cq_i|A||B|$ it follows that 
\[
p \sum_{e \in \vO_i} c_e^2 
\le \csig/\sqrt{n} \cdot 2z \cdot 2\cq_i\cpi_i\sqrt{n}\cdot |\vO_i(A,B)| 
\ll z\cq_i |\vO_i(A,B)| 
\le z\cq_i^2|A||B| .
\]
Furthermore, using $z \ge 1$ we obtain $q_i\sum \hat{c}_e^2\le q_i|\vO_i(A,B)| \le z\cq_i |\vO_i(A,B)|  \le z\cq_i^2|A||B|$. 
Noting that~$X$ 
is decreasing, using \refT{thm:BDI} 
and the $\lambda$--bound~\eqref{eq:l} 
it follows that 
\begin{equation}\label{eq:OiAB:LT}
\begin{split}
\P(|\vO_{i+1}(A,B)| \ge \cq_{i+1}|A||B|) & \le \P(X \ge \E X + \csig^{3/2}\cq_i^2|A||B|/2) \\
& \le \exp\lrpar{-\frac{\bigpar{\csig^{3/2}\cq_i^2|A||B|/2}^2}{2 \cdot 2z\cq_i^2|A||B|}}
= \exp\lrpar{-\frac{\csig^{3}\cq_{i}^2|A||B|}{16z}} \le n^{-\omega(|B|)} ,
\end{split}
\end{equation}
where for the last inequality we used $z = \csig^4\cq_i^2|A|$ and $\csig^{-1} \gg \log n$. 
Finally, taking a union bound over all disjoint vertex-sets $A,B \subseteq V$ with $|A|=|B| \ge s_0$ completes the proof (as discussed). 
\end{proof}

For the `relative error'~$\tau_i$ used in the event~$\cQ_{i}$, see~\eqref{def:tau},  
we now record the following convenient bounds:
\begin{equation}
\label{eq:taui}
1 \ge \tau_i \ge \tau_I 
= 1- \delta/2 \ge 1/2 \qquad \text{ for all $0 \le i \le I$.}
\end{equation}
\begin{lemma}\label{cPro4}
We have $\P(\neg \cQ_{i+1} \cap \cN_{i+1} \cap \cP_{i+1})\le n^{-\omega(1)}$.
\end{lemma}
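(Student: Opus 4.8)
The plan is to fix a pair $(A,B)\in\fS_s$ and estimate the probability $p_{A,B}$ that $|\vO_{i+1}(A,B)|$ escapes the interval $\bigl[\tau_{i+1}q_{i+1}|\vO_0(A,B)|,\,q_{i+1}|\vO_0(A,B)|\bigr]$ \emph{while} $\cN_{i+1}\cap\cP_{i+1}$ holds; since $|\fS_s|\le\binom ns^2=n^{s(1+o(1))}$, it suffices to show $p_{A,B}\le n^{-\omega(s)}$ and then sum over $\fS_s$. Throughout we work conditionally on $\cF_i$ with $\cG_{\le i}$ (in particular $\cQ_i$) holding, and we freely use the bounds supplied by $\cN_{i+1}$ and $\cP_{i+1}$, as permitted by the definition of $p_{A,B}$.

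First I would pin down $\E|\vO_{i+1}(A,B)|$. For fixed $f=uv\in\vO_i(A,B)$ the three events $f\notin\Gamma_{i+1}$, $f\notin\cC_{i+1}$, $f\notin\cC^2_{i+1}$ depend on disjoint families of the underlying Bernoulli variables (because $f\notin\cY_f(i)$, and the edges $\{uw,vw:w\in\cX_{uv}(i)\}$ avoid $\vE_i$), so $\P(f\in\vO_{i+1})=(1-p)\P(f\notin\cC_{i+1})(1-\P(f\in\cC^2_{i+1}))$; by \refL{lemma1}, $|\cX_{uv}(i)|\le q_i^2n$ and $p\ll 1$ this equals $q_{i+1}/q_i$ up to $\pm\Theta(\sigma^{3/2}q_i)$. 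Summing over $f$ and invoking $\cQ_i$ (so $\tau_iq_i|\vO_0(A,B)|\le|\vO_i(A,B)|\le q_i|\vO_0(A,B)|$), the identity $\tau_{i+1}=\tau_i-\delta\sigma q_i/(2\pi_I)$ and \eqref{ai4}, one checks that $\E|\vO_{i+1}(A,B)|$ lies strictly inside the target interval, with margin of order $\sigma^{3/2}q_i^2|\vO_0(A,B)|$ above and of order $\delta\sigma q_i^2|\vO_0(A,B)|/\pi_I$ below. The lower margin is exactly one step's worth of the decaying error budget $\tau_i$, and its positivity rests on the elbow-room inequality $\sqrt\sigma\,\pi_I\ll\delta$, valid for $n\ge n_0(\delta,\beta)$ since $\sigma=(\log n)^{-2}$ and $\pi_I\sim\sqrt{\beta\log n}$.

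For the upper tail, by the argument of \refL{property3} one has $|\vO_{i+1}(A,B)|\le|\vO_i(A,B)\setminus\chC_{i+1}|$, a \emph{decreasing} function of the Bernoulli variables whose edge-effects are $O(z)$ (since $\chC^1_{i+1}$ uses only centres outside $W_1$), so the monotone bounded-differences inequality \eqref{eq:BDImon} yields a bound $\exp\bigl(-\Omega(\sigma^{3}q_i^2|\vO_0(A,B)|/z)\bigr)\le n^{-\omega(s)}$. For the lower tail I would bound the removed set $\vO_i(A,B)\setminus\vO_{i+1}(A,B)$ by $|\Gamma_{i+1}(A,B)|+|\chC^1_{i+1}(A,B)|+|S_{i+1}(A,B)|+|\chC^2_{i+1}(A,B)|+|(\cC^1_{i+1}\setminus\chC^1_{i+1})(A,B)|+|(\cC^2_{i+1}\setminus\chC^2_{i+1})(A,B)|$. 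The first four terms have edge-effects $O(z)$ and are handled respectively by Chernoff bounds ($\Gamma_{i+1}$ and $S_{i+1}$ being genuine sums of independent Bernoullis), the bounded-differences inequality \eqref{eq:BDI} ($\chC^1_{i+1}$), and Warnke's limited-dependence upper-tail bound \refT{thm:UT} with dependency parameter $C=O(z)$ ($\chC^2_{i+1}$, using that each centre $w\notin W_2$ has fewer than $z$ $\Gamma_{i+1}$-neighbours in $A\cup B$); in each case the exponent works out to $\Omega(\sigma^{-O(1)}s)\gg s\log n$ after using $|\vO_0(A,B)|\ge\gamma s^2$. The two remainder terms, closed only by vertices of $W_1$ resp.\ $W_2$, are controlled by combining $\cN_{i+1}$ (the degrees $|N_{\Gamma_{i+1}}(v)|\le 2\sigma q_i\sqrt n$), the inductive bound $|N_{\vE_i}(v)|\le 2\pi_i\sqrt n$ (from $\cN_0,\dots,\cN_i$ and the definition of $\pi_i$), the codegree control from $\cP_{i+1}$, and $|W_j|=O(s/z)$ from \refL{lem:overlap}; it is here that $\beta<\beta_0=1/14$ enters, forcing these remainders — and the residual error terms from the expectation step — to be $o$ of the lower-side budget $\delta\sigma q_i^2|\vO_0(A,B)|/\pi_I$.

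I expect the lower-tail estimate to be the crux of the proof. The per-step budget $\tau_i\downarrow\tau_{i+1}$ leaves only a slack of order $\delta\sigma q_i^2|\vO_0(A,B)|/\pi_I$ in $|\vO_{i+1}(A,B)|$, yet one needs a failure probability of $n^{-\omega(s)}$ \emph{simultaneously} over the $n^{\Theta(s)}$ relevant pairs; it is precisely this tension that dictates the truncations $\chC^1_{i+1},\chC^2_{i+1}$ and the sets $W_1,W_2$ (so that all edge-effects and the Warnke dependency parameter stay as small as $O(z)$ with $z=\sigma^4q_i^2|A|$) and, ultimately, the numerical value of $\beta_0$.
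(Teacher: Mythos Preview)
Your overall plan is the right one and matches the paper's: the upper tail via the decreasing surrogate $|\vO_i(A,B)\setminus\chC_{i+1}|$, the lower tail by bounding $\Gamma_{i+1}$, $\chC^1_{i+1}$, $S_{i+1}$, $\chC^2_{i+1}$ with Chernoff/bounded differences/\refT{thm:UT}, and then treating the two ``heavy-centre'' remainders $(\cC^j_{i+1}\setminus\chC^j_{i+1})(A,B)$ deterministically on the event $\cN_{i+1}\cap\cP_{i+1}$. The expectation computation and the identification of the lower-side budget $\sim\delta\sigma q_i^2|\vO_0(A,B)|/\pi_I$ are also correct.

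The gap is in your deterministic bound for the remainders. Using only $\cN_{i+1}$ (so $|N_{\Gamma_{i+1}}(w)|\le 2\sigma q_i\sqrt n$), the bound $|N_{\vE_i}(w)|\le 2\pi_i\sqrt n$, codegrees from $\cP_{i+1}$, and the overlap lemma, the best you get for $\sum_{w\in W_1}(\text{edges in }\vO_i(A,B)\text{ closed by }w)$ is of order $\sigma q_i\sqrt n\cdot s$ (e.g.\ bound one factor by $|N_{\Gamma_{i+1}}(w)|\le 2\sigma q_i\sqrt n$ and sum the other via $\sum_{w\in W_1}|N_{\vE_i}(w)\cap(A\cup B)|\le 4s$). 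Dividing by $|\vO_0(A,B)|\ge\gamma C\sqrt{n\log n}\cdot s$ gives $O(\sigma q_i/(\gamma C\sqrt{\log n}))$, whereas the per-step budget is only $(\tau_i-\tau_{i+1})q_{i+1}\sim\delta\sigma q_i^2/(2\pi_I)$. The shortfall is a factor of $1/q_i$, which for $i$ close to $I$ is $n^{\beta-o(1)}$; no choice of $\beta<\beta_0$ rescues this. (If you instead use $|W_1|=O(s/z)$ together with the \emph{global} bound $|N_{\vE_i}(w)|\le 2\pi_i\sqrt n$, the discrepancy is worse still.)

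The missing ingredient is the event $\cQ^+_i$: the paper bounds $|\vO_i(N_{\Gamma_{i+1}}(w)\cap A,\,N_{\vE_i}(w)\cap B)|$ not by the product of the two degrees but by $q_i$ times that product (plus a boundary term $z\cdot\max$), valid because both neighbourhoods have size $\ge s_0$ or else the trivial bound $z\cdot\max$ applies. This yields a per-vertex coefficient $q_i|N_{\Gamma_{i+1}}(w)|+z\le 3\sigma q_i^2\sqrt n$, and \emph{then} the summed conclusion of \refL{lem:overlap} gives the total $O(\sigma q_i^2\sqrt n\cdot s)$, which does fit inside the budget (this is precisely where the constraint $C\ge D_0/(\delta^2\sqrt\beta\gamma)$ is used). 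So you need to invoke $\cQ^+_i$, not just $\cN_{i+1}$, $\cP_{i+1}$ and degree bounds, to close the argument.
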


The proof strategy is to estimate the different contributions to 
$\vO_{i+1} = \vO_i \setminus (\Gamma_{i+1}\cup\cC_{i+1} \cup \cC^2_{i+1})$ separately 
(here~$\cQ_i^+$ will be crucial for bounding some of the large edge-effects ignored in \refL{property3}). 

\begin{claim}\label{claimP4}
Let $\cQ_{A,B}$ be the event that the following bounds hold: 
\begin{align*}
X_1 &:=\bigabs{\vO_i(A,B)\setminus \chC_{i+1}} \in \bigl[|\vO_i(A,B)| \cdot (\cq_{i+1}/\cq_i-4\csig^{3/2}\cq_{i}) , \: |\vO_i(A,B)| \cdot \cq_{i+1}/\cq_i\bigr],\\
X_2 &:=\bigabs{\vO_i(A,B)\cap\chC^2_{i+1}} \le |\vO_i(A,B)| \cdot 2\csig^2\cq_i ,\\
X_3 &:=|\vO_i(A,B)\cap\Gamma_{i+1}|\le |\vO_i(A,B)| \cdot 2\csig^2\cq_i,\\
X_4 &:=\bigabs{\vO_i(A,B)\cap (\cC_{i+1} \cup \cC^2_{i+1}) \setminus (\chC_{i+1} \cup \chC^2_{i+1})}\le 36\csig\cq_i^2\sqrt{n}|A|.
\end{align*}
Then $\P(\neg \cQ_{A,B}\cap\cN_{i+1}\cap\cP_{i+1})\le n^{-\omega(s)}$ for all vertex-sets $(A,B) \in \fS_s$. 
\end{claim}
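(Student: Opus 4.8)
The plan is to prove, for each fixed $(A,B) \in \fS_s$, that the four bounds defining $\cQ_{A,B}$ hold with failure probability $n^{-\omega(s)}$ (keeping the conventions of \refS{sec:setupconvetions}, so we condition on $\cF_i$ and may assume the $\cF_i$--measurable event $\cG_{\le i}$); a union bound over the at most $n^{2s}$ pairs in $\fS_s$ then upgrades this to the statement needed for \refL{cPro4}. I would handle $X_1,\dots,X_4$ separately. The variable $X_3 = |\vO_i(A,B)\cap\Gamma_{i+1}|$ is a sum of independent Bernoulli variables with $\E X_3 = p|\vO_i(A,B)| \le p\cq_i|A||B|$; since $p = \csig/\sqrt n \ll \csig^2 \cq_i$ by~\eqref{ai7}, a standard Chernoff bound (Remark~\ref{rem:chernoff}) gives $X_3 \le 2\E X_3 \le |\vO_i(A,B)|\cdot 2\csig^2\cq_i$ with the required probability, using $|\vO_i(A,B)| \ge \gamma\ctau_i\cq_i s^2$ from $\cG_{\le i}\subseteq\cQ_i$ to see that $\E X_3$ is large enough on the scale $s\log n$. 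For $X_2 = |\vO_i(A,B)\cap\chC^2_{i+1}|$, which is a function of $\Gamma_{i+1}$ alone, I would first bound $\E X_2 \le |\vO_i(A,B)|\cdot|\cX_{uv}(i)|p^2 \le \csig^2\cq_i^2|\vO_i(A,B)|$ via $\cG_{\le i}\subseteq\cP_i$, and then deduce concentration from \refT{thm:BDI}: the point of truncating by $W_2$ in the definition of $\chC^2_{i+1}$ is precisely that each edge-indicator $\indic{e\in\Gamma_{i+1}}$ now has edge-effect $O(z)$ (any $w\notin W_2$ contributes at most $z$ pairs), which with \refL{summinglemma} keeps the parameter $\lambda$ small.

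\textbf{The variable $X_1$.} This is the one requiring \emph{two-sided} concentration, and the heart of the claim. I would first compute $\E X_1 = \sum_{f\in\vO_i(A,B)}\P(f\notin\chC_{i+1})$. Since $\chC_{i+1} = \chC^1_{i+1}\cup\cW_{i+1}$ with the two events conditionally independent, $\P(f\notin\chC_{i+1}) = (1-p)^{|\cY_f(i)\setminus \cY_f^{W_1}|} \cdot (1-\hp_{f,i})$, where $\cY_f^{W_1}$ omits the edges of $\cY_f(i)$ whose far endpoint lies in $W_1$; by \refL{lem:overlap} applied to the sets $N_{\vE_i}(w)\cap(A\cup B)$ (using the $\cZ$--codegree bound from $\cP_i$) we get $|W_1| = O(1/(\csig^4\cq_i^2))$ and hence $|\cY_f(i)| - |\cY_f^{W_1}| = O(1/(\csig^4\cq_i^2))$. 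Together with the stabilization identity~\eqref{ob1} and \refL{lemma1}, and with the $\sqrt{\csig}$--slack in~\eqref{def:pe} absorbing the $(1-p)^{-O(1/(\csig^4\cq_i^2))}$ correction (valid since $q_i^3\sqrt n \gg \mathrm{polylog}(n)$ by~\eqref{ai7}), this pins $\P(f\notin\chC_{i+1})$ into $[\cq_{i+1}/\cq_i - 3\csig^{3/2}\cq_i,\: \cq_{i+1}/\cq_i - \tfrac12\csig^{3/2}\cq_i]$, so $\E X_1$ lies strictly inside the window claimed for $X_1$. For concentration I would use that $\chC_{i+1}$ is monotone increasing in the edge-indicators, so $X_1$ is decreasing: \eqref{eq:BDImon} then controls the upper tail and \eqref{eq:BDI} the lower tail. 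The truncation by $W_1$ again does the work: altering whether $e\in\Gamma_{i+1}$ changes $X_1$ by at most $O(z)$ (only $w\notin W_1$ witness membership in $\chC^1_{i+1}$, and such $w$ have at most $z$ $\vE_i$--neighbours in $A\cup B$), while $\hat c_e \le \indic{e\in\vO_i(A,B)}$; with \refL{summinglemma} and $|\vO_i(A,B)|\le\cq_i|A||B|$ this gives $\lambda = O(z\cq_i^2|A||B|)$ and $\max_e c_e = O(z)$, so the deviation $t \asymp \csig^{3/2}\cq_i|\vO_i(A,B)| \gtrsim \csig^{3/2}\cq_i^2\gamma|A||B|$ yields a probability bound $\exp(-\Omega(\gamma^2 s/\csig)) = n^{-\omega(s)}$ for both tails.

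\textbf{The error term $X_4$.} I expect the claimed bound $X_4 \le 36\csig\cq_i^2\sqrt n|A|$ to hold essentially deterministically on $\cN_{i+1}\cap\cP_{i+1}$ (hence contributing nothing to the failure probability), possibly after absorbing one elementary Chernoff estimate. I would split $X_4$ into the $\cC^1$--contribution (pairs $uv\in\vO_i(A,B)$ closed only via some $w\in W_1$) and the $\cC^2$--contribution (closed only via some $w\in W_2$). For the first, each $w\in W_1$ accounts for at most $|N_{\Gamma_{i+1}}(w)\cap(A\cup B)|\cdot|N_{\vE_i}(w)\cap(A\cup B)|$ closings; one controls $\sum_{w\in W_1}|N_{\vE_i}(w)\cap(A\cup B)| = O(|A|)$ by \refL{lem:overlap} with the $\cZ$--codegree bound from $\cP_i$, and feeds in the degree bound from $\cN_{i+1}$ — replacing, for the relevant $\Gamma_{i+1}$--edges, the factor $|N_{\vE_i}(w)\cap(A\cup B)|$ by the sharper mixed-codegree bound $|\cY_{e}(i)| = O(\cq_i\cpi_i\sqrt n)$ from $\cP_i$ wherever the naive degree bound is too lossy. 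For the $\cC^2$--contribution one argues symmetrically, now with $N_{\Gamma_{i+1}}(w)\cap(A\cup B)$, the $\cZ$--codegree bound from $\cP_{i+1}$, and the degree bound from $\cN_{i+1}$. Collecting these estimates — and the $X_1,\dots,X_4$ bounds — and summing the failure probabilities over $(A,B)\in\fS_s$ completes the proof.

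\textbf{Main obstacle.} The step I expect to be the crux is making the $X_4$ bookkeeping close with the correct power of $\cq_i$: the naive deterministic estimate from $\cN_{i+1}$ alone appears to lose a factor of $1/\cq_i$, so one genuinely has to exploit the $\cP$--codegree bounds, and possibly the finer structure of which vertices can simultaneously have many $\vE_i$--neighbours \emph{and} many $\Gamma_{i+1}$--neighbours inside $A\cup B$ (this is exactly why $z$ carries the factor $\cq_i^2$, why the truncation is split into $W_1$ and $W_2$, and why the polylogarithmic $\cZ$--codegree control in $\cP_i,\cP_{i+1}$ is needed). The second most delicate point is the two-sided concentration for $X_1$ — in particular pinning $\E X_1$ down to within $O(\csig^{3/2}\cq_i|\vO_i(A,B)|)$ of the target, which is what forces the careful accounting of the $W_1$--truncation loss against the $\sqrt{\csig}$--slack built into~\eqref{def:pe}.
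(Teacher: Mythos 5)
Your treatment of $X_1$ and $X_3$ essentially matches the paper's (the paper computes the upper bound on $\E X_1$ via Harris's inequality rather than the direct expansion $\P(f\notin\chC_{i+1})=(1-p)^{|\cY_f(i)|-|\cY_f^{W_1}|}(1-\hp_{f,i})$, but both routes are fine). However, there are two genuine gaps elsewhere.

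\textbf{The $X_4$ bound does not close.} You correctly identify this as the main obstacle but the proposed fix does not work. Your count for the $\cC^1$-part of $X_4$ is
$\sum_{w\in W_1}|N_{\Gamma_{i+1}}(w)\cap(A\cup B)|\cdot|N_{\vE_i}(w)\cap(A\cup B)|$,
and you propose replacing the second factor by the mixed-codegree bound $|\cY_e(i)|\le 2\cq_i\cpi_i\sqrt n$ from $\cP_i$ on the relevant $\Gamma_{i+1}$-edges $e=wa$. Combined with $\sum_{w\in W_1}|N_{\Gamma_{i+1}}(w)\cap(A\cup B)|=O(|A|)$ from \refL{lem:overlap}, this yields $O(\cq_i\cpi_i\sqrt n|A|)$, which overshoots the target $36\csig\cq_i^2\sqrt n|A|$ by a factor $\cpi_i/(\csig\cq_i)\gg1$ (recall $\cpi_i$ is of order $\sqrt{\log n}$ while $\csig\cq_i\ll1$). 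The missing ingredient is the event $\cQ^+_i\subseteq\cG_{\le i}$: the pairs closed via~$w$ are not arbitrary pairs in $(N_{\Gamma_{i+1}}(w)\cap A)\times(N_{\vE_i}(w)\cap B)$ but must lie in $\vO_i$, and $\cQ^+_i$ gives density $\le\cq_i$ between any two vertex-sets of size $\ge s_0$. This is exactly where the extra factor of $\cq_i$ comes from: one bounds $|\vO_i(N_{\Gamma_{i+1}}(w)\cap A,\:N_{\vE_i}(w)\cap B)|\le\cq_i|N_{\Gamma_{i+1}}(w)\cap A||N_{\vE_i}(w)\cap B|+z\max\{\cdots\}$, using $\cQ^+_i$ when both sides are $\ge z\ge s_0$ and a trivial estimate otherwise, and then feeds in $\cq_i|N_{\Gamma_{i+1}}(w)|\le2\csig\cq_i^2\sqrt n$ from $\cN_{i+1}$ and \refL{lem:overlap} (applied with $U_w=N_{\vE_{i+1}}(w)\cap(A\cup B)$ and the $\cZ$-codegree bound from $\cP_{i+1}$). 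The same point applies to the $\cC^2$-part. None of $\cN_{i+1}$, $\cP_{i},\cP_{i+1}$, or the $\cY$-bound alone can replace this.

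\textbf{The $X_2$ argument also has a problem.} You propose applying \refT{thm:BDI} with edge-effects $O(z)$, arguing that the $W_2$-truncation guarantees this. But $X_2$ (or $X_2^+$) is not monotone in $\Gamma_{i+1}$: flipping a single edge-indicator $\indic{e\in\Gamma_{i+1}}$ can move the endpoint $w$ across the threshold defining $W_2$, creating or destroying up to $\Theta(z^2)$ closable pairs at once. So $\max_e c_e=\Theta(z^2)$ and, with $z=\csig^4\cq_i^2 s$ and the deviation $t\approx\csig^2\cq_i^2\gamma s^2$, the $Ct$ term in~\eqref{eq:BDI} dominates $\lambda$ and the resulting exponent is roughly $t/C=\Theta((\csig^6\cq_i^2)^{-1})$, which is far smaller than the required $\omega(s\log n)$ (indeed $(\csig^6\cq_i^2)^{-1}\le(\log n)^{12}n^{2\beta+o(1)}\ll\sqrt n$ for $\beta<\beta_0$). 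The paper sidesteps this entirely by invoking \refT{thm:UT}: $X_2^+$ is a sum of indicators $Y_\alpha$ over edge-pairs $\alpha=\{wa,wb\}$ with apex $w\notin W_2$, and the overlap parameter is automatically at most $4z$ because any $\beta\in\cK$ intersects at most $2\cdot2z$ other members. This is precisely the situation thm:UT was designed for, and a plain bounded-differences argument does not obviously substitute for it.
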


Before giving the proof, we first show that \refCl{claimP4} implies \refL{cPro4}. 
Using a union bound argument (to account for the $|\fS_s| \le n^{2s}$ vertex-sets $(A,B) \in \fS_s$), 
it is enough to show that $\cQ_{A,B} \cap \cG_{\le i}$ implies $\tau_{i+1}\cq_{i+1}|\vO_0(A,B)| \le |\vO_{i+1}(A,B)| \le \cq_{i+1}|\vO_0(A,B)|$. 
By definition of $\vO_{i+1}(A,B)$ we have 
\begin{equation*}
X_{1}-X_{2}-X_3-X_{4} \le |\vO_{i+1}(A,B)| \le X_1 .
\end{equation*}
Combining~$\cQ_{A,B}$ with the fact that $|\vO_i(A,B)| \le \cq_i |\vO_0(A,B)|$ 
by~$\cG_{\le i} \subseteq \cQ_i$, we readily infer the upper bound $|\vO_{i+1}(A,B)| \le \cq_{i+1}|\vO_0(A,B)|$. 
Turning to the lower bound, using~$\cQ_{A,B}$ it follows that 
\begin{equation*}
\begin{split}
X_{1}-X_{2}-X_3-X_{4} &\ge |\vO_i(A,B)| \cdot \bigpar{\cq_{i+1}/\cq_i-8\csig^{3/2}\cq_{i}}-36\csig\cq_i^2\sqrt{n}|A|\\
&\ge \Bigpar{\ctau_i\cq_i\bigpar{\cq_{i+1}/\cq_i-8\csig^{3/2}\cq_{i}}-\frac{36\csig\cq_i^2}{\gamma C\sqrt{\log n}}} \cdot |\vO_0(A,B)|\\
&\ge \Bigpar{\ctau_i-\frac{45\csig\cq_i}{\gamma C\sqrt{\log n}}} \cdot  \cq_{i+1} |\vO_0(A,B)| 
\ge \ctau_{i+1} \cdot \cq_{i+1}|\vO_0(A,B)| ,
\end{split}
\end{equation*}
where for the second inequality we used $|\vO_i(A,B)|\ge \ctau_i\cq_i|\vO_0(A,B)|$ (by~$\cG_{\le i} \subseteq \cQ_i$) and $|\vO_0(A,B)|\ge\gamma|A||B|\ge\gamma C\sqrt{\log n}\cdot \sqrt{n}|A|$, 
for the third inequality we used $\ctau_i \le 1$ (see~\eqref{eq:taui}), $\sigma^{1/2} \ll 1/\sqrt{\log n}$,  and $\cq_i \sim \cq_{i+1}$ (see~\eqref{ai5}), 
and for the last inequality we used $\sqrt{\log n} \sim \sqrt{\log(I \csig)/\beta} \sim \cpi_I/\sqrt{\beta}$ (see~\eqref{ai2}), $\gamma C/\sqrt{\beta} \ge D_0 /\delta^2 \ge 91/\delta$ (by assumption and~\eqref{def:D0beta0}) and $\ctau_{i}-\delta\sigma\cq_i/\cpi_I=\ctau_{i+1}$ (see~\eqref{def:tau}). 
This completes the proof of \refL{cPro4} (assuming \refCl{claimP4}).

\begin{proof}[Proof of \refCl{claimP4}]
We start with $X_1=|\vO_i(A,B)\setminus\chC_{i+1}|$. 
Since $s \ge s_0$, the upper tail argument for~$X=X_1$ defined in~\eqref{def:XO} 
carries over from \refL{property3}, 
with $\E X_{1} \le  |\vO_i(A,B)| (\cq_{i+1}/\cq_i-\csig^{3/2}\cq_i/2)$ and $\lambda \le 2z\cq_i |\vO_i(A,B)|$, say.  
In particular, 
noting that here $|\vO_i(A,B)|\ge\ctau_i\cq_i|\vO_0(A,B)|\ge \gamma\ctau_i\cq_i|A||B|$, 
an application of \refT{thm:BDI} along the lines of~\eqref{eq:OiAB:LT} gives 
\begin{equation}\label{eq:X1:LT}
\begin{split}
\P(X_1 \ge |\vO_i(A,B)| \cq_{i+1}/\cq_i) 
& \le \exp\lrpar{-\frac{\bigpar{\csig^{3/2}\cq_i|\vO_i(A,B)|/2}^2}{2 \cdot 2z\cq_i |\vO_i(A,B)|}} 
 \le \exp\lrpar{-\frac{\gamma\ctau_i\csig^{3}\cq_{i}^2|A||B|}{16z}} \le n^{-\omega(s)} ,
\end{split}
\end{equation}
where for the last inequality we used $z =\csig^4\cq_i^2|A|$, $\tau_i \ge 1/2$ (see~\eqref{eq:taui}), $\gamma \csig^{-1} \gg \log n$ and $|B|=s$.  
For the lower tail of~$X_1$ we proceed similarly. 
Since $\chC_{i+1} \subseteq \cC_{i+1}$, using \refL{lemma1} we obtain  
\[\E X_{1}=\sum_{e\in\vO_i(A,B)}\P(e\not\in \chC_{i+1})
\ge \sum_{e\in\vO_i(A,B)} \P(e\not\in\cC_{i+1})
\ge |\vO_i(A,B)| \cdot (\cq_{i+1}/\cq_i-3\csig^{3/2}\cq_i).\] 
Furthermore, the edge-effect and stabilization-effect estimates from the proof of \refL{property3} again carry over, 
giving $\lambda \le 2z\cq_i |\vO_i(A,B)|$ and $\max_{e \in \vO_i} \max\{c_e,\hc_e\} \le 2z$, say.  
Applying inequality~\eqref{eq:BDI} of \refR{rem:BDI:LT} (with~$C=2z$), 
it follows similarly to~\eqref{eq:X1:LT} that 
\begin{equation}\label{eq:X1}
\begin{split}
\P\bigpar{X_{1}\le |\vO_i(A,B)| (\cq_{i+1}/\cq_i-4\csig^{3/2}\cq_{i})} & \le 
\P\bigpar{X_{1}\le\E X_{1} -\csig^{3/2}\cq_{i}|\vO_i(A,B)|}\\
&\le \exp\lrpar{-\frac{\bigpar{\csig^{3/2}\cq_{i}|\vO_i(A,B)|}^2}{
	2\bigpar{2z\cq_i|\vO_i(A,B)|+2z \cdot \csig^{3/2}\cq_{i}|\vO_i(A,B)|}}}\\
&
\le \exp\lrpar{-\frac{\gamma\ctau_i\csig^{3}\cq_i^2|A||B|}{8z}} 
\le n^{-\omega(s)} .
\end{split}
\end{equation}

Turning to $X_2=|\vO_i(A,B)\cap\chC^2_{i+1}|$, note that by construction of $\chC^2_{i+1}$ we have 
\begin{equation}\label{N1}
X_2 = \sum_{e \in \vO_i(A,B)} \indic{e \in \chC^2_{i+1}} \le\sum_{ab \in O_i(A,B)} \sum_{w \in V \setminus W_2}\indic{\set{wa,wb}\subseteq\Gamma_{i+1}} =: X_2^+ .
\end{equation}
Gearing up to apply \refT{thm:UT} to~$X_2^+$, in view of $\Gamma_{i+1} \subseteq \vO_i$ we define 
\begin{align*}
\cI & := \bigset{\set{wa,wb}\subseteq \vO_i: \: ab \in \vO_i(A,B), \: w\in V, \: |\set{a,b,w}|=3} , \\
\cK & :=\set{\set{wa,wb}\in\cI: \: w \not\in W_2, \: \set{wa,wb} \subseteq \Gamma_{i+1}} .
\end{align*}
Since $p^2 \cdot |\cX_{ab}(i)|\le \csig^2\cq_i^2 \le \csig^2\cq_i$ by $\cG_{\le i} \subseteq \cP_i$ and $\cq_i \le 1$ (see~\eqref{ai1}), we obtain 
\begin{equation*}
\begin{split}
\sum_{\alpha\in\cI}\E \indic{\alpha \subseteq \Gamma_{i+1}} 
&= p^2 \sum_{ab \in \vO_i(A,B)} \sum_{v \in V}\indic{\set{va,vb} \subseteq \vO_i} 
= p^2 \sum_{ab \in \vO_i(A,B)} |\cX_{ab}(i)| \le \csig^2\cq_i \cdot |\vO_i(A,B)|=:\mu.
\end{split}
\end{equation*}
Furthermore, since $\cK$ only contains edge-pairs $\set{wa,wb}$ with $\set{a,b} \subseteq  N_{\Gamma_{i+1}}(w) \cap (A \cup B)$  
where the `central vertex'~$w$ satisfies $w  \not\in W_2$ and thus $|N_{\Gamma_{i+1}}(w)\cap (A\cup B)| \le z$, 
for all $\beta \in \cK$ we see that 
\[
|\set{\alpha \in \cK: \: \alpha \cap \beta \neq \emptyset}| \le \sum_{f \in \beta} |\set{\alpha \in \cK: \: f \in \alpha}| \le \sum_{f \in \beta} \sum_{v \in f \setminus W_2}|N_{\Gamma_{i+1}}(v)\cap (A\cup B)| \le 2 \cdot 2 \cdot z .
\]
It follows that $X_2^+ = \sum_{\alpha \in \cK}\indic{\alpha \subseteq \Gamma_{i+1}} \le Z_{4z}$, where $Z_{4z}$ is defined as in \refT{thm:UT}. 
Applying first~\eqref{N1} and then inequality~\eqref{eq:C} with $C=4z$, using 
$|\vO_i(A,B)| \ge \gamma\ctau_i\cq_i |A||B|$ it follows similarly to~\eqref{eq:X1:LT} that 
\begin{equation}\label{eq:X2}
\begin{split}
\P( X_2\ge 2\csig^2\cq_i|\vO_i(A,B)|) &\le\P(Z_{4z}\ge 2\mu)
\le \exp\Bigpar{-\frac{\mu^2}{2 \cdot 4z \cdot 2\mu}}  
\le \exp\Bigpar{-\frac{\gamma\ctau_i\csig^{2}\cq_i^2|A||B|}{16z}} \le n^{-\omega(s)} .
\end{split}
\end{equation}

We next turn to $X_3=|\vO_i(A,B)\cap \Gamma_{i+1}|$, 
which is a sum of independent Bernoulli random variables with 
$\E X_3 =|\vO_i(A,B)| \cdot p \ll \csig^2\cq_i |\vO_i(A,B)| =: t$, as~$\cq_i \sqrt{n} \ge n^{\tau}$ by~\eqref{ai7}. 
Applying standard Chernoff bounds, using $|\vO_i(A,B)| \ge \gamma\ctau_i\cq_i |A||B|$ 
and $z \ge 1$ it follows by comparison with the last inequality of~\eqref{eq:X2} that 
\begin{equation}\label{eq:X3}
\P(X_3 \ge 2\csig^2\cq_i|\vO_i(A,B)|) \le \P(X_3 \ge \E X_3 +t) 
\le \exp\Bigpar{-\frac{t^2}{2 \cdot 2t}} 
\le \exp\Bigpar{-\frac{\gamma\ctau_i\csig^2\cq_i^2 |A||B|}{4}} \le n^{-\omega(s)} .
\end{equation}

Finally, $X_4$~is a more difficult variable: 
assuming that $\cN_{i+1} \cap \cP_{i+1} \cap \cG_{\le i}$ holds, 
we shall bound~$X_4$ by \emph{deterministic} counting arguments 
(here the edge-effects can potentially be fairly large, so concentration inequalities seem less effective).  
Noting $\cC_{i+1}\setminus\chC_{i+1}=\cC^1_{i+1}\setminus\chC^1_{i+1}$, similarly to~\eqref{N1} we obtain 
\begin{equation}\label{equationX4}
\begin{split}
X_4 
&\le \sum_{e\in\vO_i(A,B)}\indic{e\in\cC^1_{i+1}\setminus\chC^1_{i+1}}+
\sum_{e\in\vO_i(A,B)}\indic{e\in\cC^2_{i+1}\setminus\chC^2_{i+1}}\\
&\le \sum_{w\in W_1}\Bigpar{|\vO_i( N_{\Gamma_{i+1}}(w)\cap A, \:  N_{\vE_i}(w)\cap B)|+|\vO_i( N_{\Gamma_{i+1}}(w)\cap B, \:  N_{\vE_i}(w)\cap A)|}\\
&\qquad  +\sum_{w \in W_2}|\vO_i( N_{\Gamma_{i+1}}(w)\cap A, \:  N_{\Gamma_{i+1}}(w)\cap B)|.
\end{split}
\end{equation}
Using the upper bound estimate from $\cG_{\le i} \subseteq \cQ^+_i$ when $\min\{ |N_{\Gamma_{i+1}}(v)\cap A|,|N_{\vE_{i}}(v)\cap B|\} \ge z$ holds 
(note that $z = \csig^4 q_i^2 s \ge s_0$), and a trivial estimate otherwise, 
it follows that 
\begin{equation}\label{equation:X41a}
\begin{split}
& |\vO_i( N_{\Gamma_{i+1}}(w)\cap A,  \: N_{\vE_i}(w)\cap B)| \\
& \qquad \le \cq_i |N_{\Gamma_{i+1}}(w)\cap A||N_{\vE_i}(w)\cap B| + z\max\{ |N_{\Gamma_{i+1}}(w)\cap A|,|N_{\vE_{i}}(w)\cap B|\}\\
& \qquad \le \bigpar{\cq_i |N_{\Gamma_{i+1}}(w)| + z} \cdot |N_{\vE_i\cup\Gamma_{i+1}}(w)\cap(A\cup B)| . 
\end{split}
\end{equation}
With an eye on~\eqref{equationX4}, we note that an analogous estimate also holds when we reverse the role of~$A$ and~$B$ in~\eqref{equation:X41a}. 
Furthermore, $\cq_i|N_{\Gamma_{i+1}}(w)|\le 2\csig \cq_i^2\sqrt{n}$ by~$\cN_{i+1}$, and $z = \sigma^4q_i^2 s = O(\sigma^3q_i^2\sqrt{n}) \ll \csig\cq_i^2\sqrt{n}$. 
Recalling $\vE_i\cup\Gamma_{i+1} = \vE_{i+1}$, 
observe that~$\cP_{i+1}$ and~$i+1 \le I$ imply $|N_{\vE_i\cup\Gamma_{i+1}}(u) \cap  N_{\vE_i\cup\Gamma_{i+1}}(v)| = |\cZ_{uv}(i+1)| \le I (\log n)^9 =: y$ when~$u \neq v$, 
and that~\eqref{eq:z:frac} implies~$z \gg \sqrt{|A \cup B|y}$ (as $|A|= s \ge s_0$). 
Using the definition of~$W_1$ and \refL{lem:overlap} (with $\cI = W_1$, $U=A\cup B$ and $U_w = N_{\vE_i\cup\Gamma_{i+1}}(w) \cap U$), it follows that 
\begin{equation}\label{equation:X411}
\begin{split}
&\sum_{w\in W_1}\Bigpar{|\vO_i( N_{\Gamma_{i+1}}(w)\cap A, \:  N_{\vE_i}(w)\cap B)|+|\vO_i( N_{\Gamma_{i+1}}(w)\cap B, \:  N_{\vE_i}(w)\cap A)|}\\
&\qquad \le 2 \cdot 3\csig\cq_i^2\sqrt{n} \cdot \sum_{w\in W_1} |N_{\vE_i\cup\Gamma_{i+1}}(w)\cap(A\cup B)| 
\le 2 \cdot 3\csig\cq_i^2\sqrt{n} \cdot 2|A\cup B|\le 24\csig\cq_i^2\sqrt{n}|A|. 
\end{split}
\end{equation}
Proceeding analogously to~\eqref{equation:X41a}--\eqref{equation:X411}, using the definition of~$W_2$ and \refL{lem:overlap} we similarly obtain
\begin{equation}\label{equation:X42}
\begin{split}
&\sum_{w\in W_2}|\vO_i( N_{\Gamma_{i+1}}(w)\cap A, \: N_{\Gamma_{i+1}}(w)\cap B)| \\
&\qquad \le 3\csig\cq_i^2\sqrt{n} \cdot \sum_{w\in W_2} |N_{\Gamma_{i+1}}(w)\cap(A\cup B)| 
\le 3\csig\cq_i^2\sqrt{n} \cdot 2|A\cup B|\le 12\csig\cq_i^2\sqrt{n}|A| .
\end{split}
\end{equation}
To sum up, inserting the bounds~\eqref{equation:X411}--\eqref{equation:X42} into \eqref{equationX4}, we showed that $\cR_{i+1} \cap \cP_{i+1} \cap \cG_{\le i}$ implies $X_4\le 36\csig\cq_i^2\sqrt{n}|A|$. 
This completes the proof together with the probability estimates~\eqref{eq:X1:LT}, \eqref{eq:X1}, \eqref{eq:X2}, and~\eqref{eq:X3}.  
\end{proof}

\begin{remark}\label{rem:fS_larger}%
If desired, it would not be difficult to extend the event~$\cQ_{i}$ 
to larger vertex-sets $(A,B) \in \fS_{\ge s} := \bigcup_{s \le r \le n} \fS_{r}$ 
(the above arguments all carry over, except for the modified 
bound~$X_4\le 3 \cdot \max_w(\cq_i |N_{\Gamma_{i+1}}(w)| + z) \cdot 2|A \cup B| \le 36\csig\cq_i^2 \max\{\sqrt{n}, \csig^3|B|\}|A|$, 
which is still strong enough to deduce \refL{cPro4}). 
This in turn could, e.g., be used to also extend the event~$\cT_I$ to $(A,B) \in  \fS_{\ge s}$ 
(the proofs in \refS{sec:P5} then carry over). 
\end{remark}

\begin{remark}\label{rem:OiTIdirect}%
Under a mild extra assumption such as $|\vO_0| \ge \csig n$, say, it would not be difficult to 
add two-sided bounds for the total number of open edges~$|O_i|$ and edges~$|\vF_I|$ to the events~$\cQ_{i}$ and $\cT_I$. 
For example, much simpler variants of the above arguments then imply $\tau_i q_i |\vO_{0}| \le |O_{i}| \le q_i |\vO_{0}|$ 
(by directly estimating $|\vO_{i} \setminus \cC_{i+1}|-|\Gamma_{i+1}|-|\cC^2_{i+1}| \le |\vO_{i+1}| \le |\vO_{i} \setminus \cC_{i+1}|$, 
without using~$\chC_{i+1}$ or $\chC^2_{i+1}$, nor a union bound over all vertex-sets), 
which in turn gives $|\vF_I| = (1 \pm \delta) \rho |\vO_{0}|$ by straightforward variants of the proofs in \refS{sec:P5}. 
\end{remark}

\subsection{Event $\cT_I$: number $|\vF_I(A,B)|$ of edges between large sets}\label{sec:P5} 
For $|\vF_I(A,B)|$ it is convenient to think of the entire 
nibble construction as one evolving random process. 
Thus, in contrast to previous sections, in \refL{P5} and \refCl{claimP5} below we shall \emph{not} tacitly condition on~$\cF_i$. 
\begin{lemma}\label{P5}
We have 
$\P(\neg\cT_I \cap \cG_{\le I})\le n^{-\omega(1)}$. 
\end{lemma}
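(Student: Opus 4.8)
The plan is to fix a single pair $(A,B) \in \fS_s$, establish the one–pair estimate
\[
\P\bigpar{|\vF_I(A,B)| \neq (1\pm\delta)\rho\,|\vO_0(A,B)|, \ \cG_{\le I}} \le n^{-\omega(s)},
\]
and then take a union bound over the at most $|\fS_s| \le n^{2s}$ relevant pairs (an extra union over the $I = \ceil{n^\beta} = n^{O(1)}$ steps, if it becomes necessary, is harmless). So that the deterministic estimates of $\cG_{\le I}$ are available throughout, I would work with the process frozen at the stopping time $\ctau := \min\{i : \neg\cG_i\} \wedge I$: on $\cG_{\le I}$ the frozen and original processes coincide, while for the frozen process every bound in $\cN_i, \cP_i, \cQ^+_i, \cQ_i$ holds at all times $0 \le i \le I$.

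For a fixed $(A,B)$, the starting point is the decomposition implied by the nibble: the sets $(\Gamma_{i+1}\setminus E(\cD_{i+1}))(A,B)$ are pairwise disjoint (an edge permanently leaves $\vO$ once it is placed in some $\Gamma_{i+1}$), so that
\[
|\vF_I(A,B)| = \sum_{0 \le i < I}|\Gamma_{i+1}(A,B)| \ - \ \sum_{0 \le i < I}\bigabs{E(\cD_{i+1})(A,B)} =: S - R.
\]
The ``alteration loss'' $R$ is of lower order. Since $E(\cD_{i+1}) \subseteq E(\cB_{i+1}) \subseteq \Gamma_{i+1}$, one has $\bigabs{E(\cD_{i+1})(A,B)} \le \sum_{ab \in \vO_i(A,B)}\indic{ab \in \Gamma_{i+1}}\indic{ab \in E(\cB_{i+1})}$, and expanding the last indicator over the at most three bad pairs/triples through $ab$ writes $R$ as a sum, over steps and configurations, of products of two or three independent edge–indicators whose overlaps are bounded on $\cG_{\le I}$ by the degree/codegree estimates of $\cN_i, \cP_i$. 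Using the conditional $\Bin$ structure one gets $\E[R \mid \cG_{\le I}] = O\bigpar{p^2 \sum_i |\vO_i(A,B)|\max_{uv}|\cY_{uv}(i)| + p^3\sum_i|\vO_i(A,B)|\max_{uv}|\cX_{uv}(i)|}$, which by \refL{lem:aux}, $\sum_{i<I}\csig\cq_i = \cpi_I - \csig$ and $\cpi_I^2 = \Theta(\log n)$ is $o\bigpar{\rho|\vO_0(A,B)|}$; the upper tail of $R$ around this mean is then controlled by the same machinery as the leading term (a limited–overlap Chernoff bound, \refT{thm:UT}), yielding $\P\bigpar{R \ge \eps_1 \rho|\vO_0(A,B)|, \ \cG_{\le I}} \le n^{-\omega(s)}$ for any fixed $\eps_1 > 0$.

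The substance of the argument is the leading term $S = \sum_{0 \le i < I}|\Gamma_{i+1}(A,B)|$. Conditionally on $\cF_i$ one has $|\Gamma_{i+1}(A,B)| \sim \Bin(|\vO_i(A,B)|, p)$, and $\cG_{\le i} \subseteq \cQ_i$ pins $|\vO_i(A,B)|$ into $[\ctau_i \cq_i |\vO_0(A,B)|, \cq_i|\vO_0(A,B)|]$; hence, up to martingale fluctuations, $S$ equals $\sum_{i<I}p\,|\vO_i(A,B)|$, which lies in $\bigsqpar{\frac{|\vO_0(A,B)|}{\sqrt n}\sum_{i<I}\csig\cq_i\ctau_i, \ \frac{|\vO_0(A,B)|}{\sqrt n}\sum_{i<I}\csig\cq_i}$. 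By~\eqref{def:pii}, $\sum_{i<I}\csig\cq_i = \cpi_I - \csig$; by~\eqref{ai2} with $I = \ceil{n^\beta}$ and $\csig = (\log n)^{-2}$, $\cpi_I = (1+o(1))\sqrt{\beta \log n}$, so the upper endpoint equals $(1+o(1))\rho|\vO_0(A,B)|$; and since $\ctau_i \ge \ctau_I = 1 - \delta/2$ by~\eqref{eq:taui} (more sharply, $\sum_{i<I}\csig\cq_i\ctau_i \ge (1-\delta/4)\cpi_I - \csig$ using $\cpi_{i+1}^2 - \cpi_i^2 \ge 2\csig\cq_i\cpi_i$) the lower endpoint is at least $(1-\delta)\rho|\vO_0(A,B)|$ for $n$ large. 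It therefore remains to show that $S$ deviates from $\sum_{i<I}p\,|\vO_i(A,B)|$ by at most $\eps_2\rho|\vO_0(A,B)|$, above and below, with probability $1 - n^{-\omega(s)}$, for a small $\eps_2 = \eps_2(\delta) > 0$; combined with the bound on $R$ and the choice $\eps_1 + \eps_2 < \delta/2$ this gives $|\vF_I(A,B)| = (1\pm\delta)\rho|\vO_0(A,B)|$ on the relevant event.

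This concentration of $S$ is the main obstacle. The step martingale $M_k := \sum_{i<k}\bigpar{|\Gamma_{i+1}(A,B)| - p|\vO_i(A,B)|}$ has $I \approx n^\beta$ increments that on $\cG_{\le I}$ are controlled only in the weak, non–uniform way $\bigabs{|\Gamma_{i+1}(A,B)| - p|\vO_i(A,B)|} = O(\csig\cq_i\sqrt n\,s)$, so neither a step–wise Azuma estimate nor a naive Freedman estimate suffices. Instead I would follow Kim's strategy: view the whole nibble as a single random process driven by the independent edge–coins $\bigpar{\indic{e \in \Gamma_{j+1}}, \indic{e \in S_{j+1}}}$, restrict the analysis to the bipartite graph between $A$ and $B$, and re–express $S$ (equivalently $|\vF_I(A,B)|$) as a sum of products of a bounded number of these coins with limited overlaps --- on $\cG_{\le I}$, the degree and codegree bounds of $\cN_j, \cP_j$ force each coin to influence only boundedly many edges between $A$ and $B$, whether directly or through the closures $\cC^1_{j+1}, \cC^2_{j+1}$, and the sum of squared effects is $O\bigpar{\rho|\vO_0(A,B)|}$ with no logarithmic loss. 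Feeding this into \refT{thm:UT} (for the upper tail; the lower tail is analogous, using \refT{thm:BDI}) gives $\P\bigpar{|M_I| \ge \eps_2\rho|\vO_0(A,B)|, \ \cG_{\le I}} \le \exp\bigpar{-\Omega(\delta^2\rho|\vO_0(A,B)|)}$. Since $\rho|\vO_0(A,B)| \ge \rho\gamma s^2 \ge \tfrac12\sqrt\beta\gamma C\,s\log n$ and the hypothesis $C \ge D_0/(\delta^2\sqrt\beta\gamma)$ makes $\delta^2\sqrt\beta\gamma C \ge D_0$ large, this probability is $n^{-\omega(s)}$ --- exactly the bound predicted by the heuristic in Footnote~\ref{fn:prop:const} --- and the union bound over $\fS_s$ completes the proof.
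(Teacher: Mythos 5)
Your decomposition $|\vF_I(A,B)|=S-R$ matches the paper's $X-Y$, and your bound on the alteration loss $R$ is close in spirit to the paper's treatment of $Y$ (though you should be warned that one cannot simply ``apply Theorem~\ref{thm:UT}'' to $R$ across all steps at once, since the $\cF_i$-measurable collection~$\cD_{i+1}$ is not a fixed family of edge-tuples; the paper applies Theorem~\ref{thm:UT} conditionally on~$\cF_i$ at each step and then handles the cross-step combination with a careful union bound over compositions of~$\ceil{y}$ into~$I$ parts).

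The genuine gap is in your concentration argument for the leading term~$S$. You correctly observe that the step martingale has increments that are too large for Azuma/Freedman, and you propose to fall back on ``Kim's strategy'' of re-expressing $S$ as a sum of products of a bounded number of edge-coins $(\indic{e\in\Gamma_{j+1}},\indic{e\in\cW_{j+1}})$ with limited overlaps, so that Theorem~\ref{thm:UT}/Theorem~\ref{thm:BDI} apply. But the event $\{f\in\vO_i(A,B)\}$ is a complicated Boolean function of the history, not a product of finitely many coins: whether $f$ has been closed in some earlier step depends on whether an edge in $\cY_f(j)$ (or a pair in $\cX_f(j)$) was selected, and $\cY_f(j),\cX_f(j)$ are themselves history-dependent. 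Even the \emph{worst-case} influence of a single coin on $S$ is not bounded --- including one edge $uv$ in $\Gamma_1$ alone can close up to $|\cY_{uv}(0)|$ or $|\cX_{uv}(0)|=\Theta(n)$ edges incident to $A\cup B$ --- so neither inequality applies in the straightforward way you sketch. Kim's original argument handles this with a substantially more delicate martingale/concentration analysis which you do not reproduce. The paper sidesteps the issue entirely with a clean stochastic-domination trick that you miss: conditionally on~$\cF_i$, $|\Gamma_{i+1}(A,B)|\sim\Bin(|\vO_i(A,B)|,p)$, and on $\cG_i\subseteq\cQ_i$ the random parameter $|\vO_i(A,B)|$ is sandwiched \emph{deterministically} in $[\ceil{\ctau_i\cq_i|\vO_0(A,B)|},\floor{\cq_i|\vO_0(A,B)|}]$; hence $\indic{\cG_i}|\Gamma_{i+1}(A,B)|$ is stochastically dominated above and below by Binomials with deterministic parameters, and the sum over $i$ is stochastically dominated by a \emph{single} Binomial $Z^\pm\sim\Bin(\sum_i\cdot,p)$ with mean $\mu^\pm$, to which a one-line Chernoff bound applies (see~\eqref{eq:Zplus}--\eqref{eq:XUTLYmu}). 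Without this (or an equivalent device) your argument for $S$ is incomplete.
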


Since $\vF_I = \bigcup_{0 \le i < I}(\vF_{i+1}\setminus \vF_{i})$ forms a partition, 
the proof strategy is to estimate the two contributions to $\vF_{i+1}\setminus \vF_{i} = \Gamma_{i+1} \setminus E(\cD_{i+1})$ separately 
(here the deleted edges $E(\cD_{i+1})$ will have negligible impact). 

\begin{claim}\label{claimP5}
Let $\cT_{A,B}$ be the event that the following bounds hold: 
\begin{align*}
X &:=\sum_{0\le i <I} |\vO_i(A,B) \cap \Gamma_{i+1}|   \in \bigl[(1-\delta/2)\mu^-, \: (1+\delta/2)\mu^+\bigr], \\
Y &:= \sum_{0\le i <I}\bigl|\vO_{i}(A,B) \cap E(\cD_{i+1})|  \le \delta^2 \mu^-/9,
\end{align*}
where $\mu^+:=\sum_{0\le i< I}\floor{\cq_i |\vO_0(A,B)|}p$ and $\mu^-:=\sum_{0\le i <I}\ceil{\ctau_i\cq_i |\vO_0(A,B)|}p$. 
Then $\P(\neg\cT_{A,B}\cap \cG_{\le I})\le 3 n^{-3s}$ 
for all vertex-sets $(A,B) \in \fS_s$. 
\end{claim}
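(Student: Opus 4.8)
The plan is to decompose $\neg\cT_{A,B}$ into the three sub-events $\{X>(1+\delta/2)\mu^+\}$, $\{X<(1-\delta/2)\mu^-\}$ and $\{Y>\delta^2\mu^-/9\}$, to bound the probability of each one intersected with $\cG_{\le I}$ by $n^{-3s}$, and to finish by a union bound. Throughout we use that $\cG_{\le I}\subseteq\bigcap_{0\le i\le I}\cQ_i$, so that on the good event $\lceil\ctau_i\cq_i|\vO_0(A,B)|\rceil\le|\vO_i(A,B)|\le\lfloor\cq_i|\vO_0(A,B)|\rfloor$ for every $0\le i\le I$; we also record that $\mu^\pm=(1\pm o(1))\rho|\vO_0(A,B)|$ (since $\ctau_i=1-o(1)$, $\sum_{j<I}\csig\cq_j=\cpi_I-\csig$ by~\eqref{def:pii}, and $\cpi_I=\sqrt{\log(I\csig)}\pm2$ by~\eqref{ai2}), so that $\mu^\pm\ge\tfrac12\rho\gamma s^2$ for $n$ large.

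For the two tails of $X$ the idea is a coupling. Conditionally on $\cF_i$ the increment $|\vO_i(A,B)\cap\Gamma_{i+1}|$ is $\Bin(|\vO_i(A,B)|,p)$, which on $\cG_{\le i}$ lies between $\Bin(\lceil\ctau_i\cq_i|\vO_0(A,B)|\rceil,p)$ and $\Bin(\lfloor\cq_i|\vO_0(A,B)|\rfloor,p)$. For each $i$ I would introduce, by adding or deleting fresh independent $\Be(p)$ coins, auxiliary variables with $\tilde X_i\le|\vO_i(A,B)\cap\Gamma_{i+1}|\le\hat X_i$ on $\cG_{\le i}$ and such that, \emph{whether or not} $\cG_{\le i}$ holds, the conditional laws given $\cF_i$ are $\tilde X_i\sim\Bin(\lceil\ctau_i\cq_i|\vO_0(A,B)|\rceil,p)$ and $\hat X_i\sim\Bin(\lfloor\cq_i|\vO_0(A,B)|\rfloor,p)$. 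Since these parameters are deterministic, the conditional moment generating functions are deterministic, and hence $\sum_i\tilde X_i$ and $\sum_i\hat X_i$ satisfy the same Chernoff bounds as $\Bin(\cdot,p)$ variables with means $\mu^-$ and $\mu^+$; moreover, on $\cG_{\le I}$ one has $\sum_i\tilde X_i\le X\le\sum_i\hat X_i$. Therefore $\P(X\ge(1+\delta/2)\mu^+,\cG_{\le I})\le\P(\sum_i\hat X_i\ge(1+\delta/2)\mu^+)\le\exp(-\delta^2\mu^+/12)$ and similarly $\P(X\le(1-\delta/2)\mu^-,\cG_{\le I})\le\exp(-\delta^2\mu^-/8)$; since $\delta^2\mu^\pm\gtrsim\delta^2\gamma\sqrt\beta C^2\sqrt n(\log n)^{3/2}$ and $C\ge D_0/(\delta^2\sqrt\beta\gamma)$ with $D_0=108$, both exponents exceed $3s\log n$ for $n$ large, so both probabilities are $\le n^{-3s}$.

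For $Y$ the first step is to bound its mean. Since $\cD_{i+1}\subseteq\cB_{i+1}$ is edge-disjoint, $E(\cD_{i+1})\subseteq E(\cB_{i+1})$, so an edge $ab\in\vO_i(A,B)$ contributes to $Y$ at step $i$ only if $ab\in\Gamma_{i+1}$ and either some $av\in\Gamma_{i+1}$ with $bv\in\vF_i\subseteq\vE_i$, or some $bv\in\Gamma_{i+1}$ with $av\in\vE_i$, or some $aw,bw\in\Gamma_{i+1}$ (a bad pair, respectively a bad triple). Hence on $\cG_{\le i}\subseteq\cP_i$ the conditional expected step-$i$ contribution is at most $|\vO_i(A,B)|\bigpar{2p^2|\cY_{ab}(i)|+p^3|\cX_{ab}(i)|}\le\cq_i|\vO_0(A,B)|\cdot O\bigpar{p^2\cq_i\cpi_i\sqrt n}$, and summing over $i$, using $p=\csig/\sqrt n$, $\csig=(\log n)^{-2}$ and $\cpi_i\le\cpi_I=O(\sqrt{\log n})$, gives total mean $\lesssim\csig\cpi_I\cdot\rho|\vO_0(A,B)|=o(\delta^2\mu^-)$. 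For the concentration one passes, via a stochastic-domination argument, to a model of genuinely independent Bernoulli coins: on $\cG_{\le I}$ one majorises $Y$ by a sum, over all steps and all bad pairs/triples, of indicators of ``limited-overlap'' conjunctions of independent $\Be(p)$ variables — the $\cF_i$-measurable base edges $e\in\vF_i$ being replaced by the monotone functions $\indic{e\in\vE_j\text{ for some }j\le i}$ of the $\Gamma$-coins, and the overlap being controlled by the degree and codegree bounds of $\cN_j$ and $\cP_j$ — to which the Chernoff-type upper-tail inequality \refT{thm:UT} applies and yields $\P(Y\ge\delta^2\mu^-/9,\cG_{\le I})\le n^{-3s}$, the slack again coming from the smallness of $\csig=(\log n)^{-2}$.

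The main obstacle is precisely this concentration step for $Y$. In contrast to $X$, the variable $Y$ is entangled with the entire history — the base edges of the bad pairs live in $\vF_i$, which depends on all earlier nibble steps and on the greedy, edge-disjoint choice of $\cD_{i+1}$ — so one cannot simply work step by step in the naive way. Carrying out the stochastic-domination reduction so that $Y\cdot\indic{\cG_{\le I}}$ is legitimately dominated by a limited-overlap functional of independent Bernoullis whose mean \emph{and} overlap parameter are both small enough for the exponent of \refT{thm:UT} to reach $\omega(s\log n)$ is where the real work of the claim lies; the bounds for $X$ and the computation $\mu^\pm=(1\pm o(1))\rho|\vO_0(A,B)|$ are by comparison routine.
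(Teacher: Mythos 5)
Your treatment of $X$ is essentially the paper's: a step-by-step stochastic domination by independent $\Bin(\floor{\cq_i|\vO_0(A,B)|},p)$ (resp.\ $\Bin(\ceil{\ctau_i\cq_i|\vO_0(A,B)|},p)$) variables on $\cG_{\le I}$, followed by Chernoff on the aggregated binomials with means $\mu^\pm$, and the numerics $\delta^2\mu^\pm\ge 36s\log n$ from $\gamma C\sqrt\beta\ge D_0/\delta^2$. Your estimate $\mu^\pm\sim\rho|\vO_0(A,B)|$ is also right. So the first two-thirds of the argument are fine.

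The gap is exactly where you flag it: the upper tail for $Y$. You propose a \emph{global} reduction — majorise $Y\cdot\indic{\cG_{\le I}}$ by a single limited-overlap functional of all the $\Gamma$-coins across all $I$ steps, replacing $\cF_i$-measurable events like $e\in\vF_i$ by monotone functions of earlier coins, and apply \refT{thm:UT} once. This is not what the paper does, and carrying it out is genuinely problematic: the ``overlap'' parameter $C$ for such a global family would no longer be small (a single coin $\indic{f\in\Gamma_{j+1}}$ can enter the $\vE_i$-membership of many base edges, and hence many conjunctions), and the events $e\in\vE_i$ also depend on the $\cW_j$-coins through $\vO_{j-1}$, not just the $\Gamma$-coins. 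The paper instead works \emph{step by step}: conditionally on $\cF_i$, the set $\vF_i$ and the greedy choice of $\cD_{i+1}$ are deterministic, so no disentangling of history is needed, and the \emph{pairwise edge-disjointness} of $\cD_{i+1}$ gives overlap parameter $C=1$ in \refT{thm:UT}, yielding the very strong one-step tail $\P(Y_{i+1}\ge y_{i+1}\mid\cF_i)\le(\tfrac{e\mu^*_{i+1}}{y_{i+1}/3})^{y_{i+1}/3}\le\csig^{y_{i+1}/6}$ whenever $y_{i+1}\ge 9\mu^*_{i+1}/\sqrt\csig$. The steps are then combined not by domination but by a union bound over all compositions $\ceil{y}=y_1+\cdots+y_I$ into non-negative integers; the combinatorial factor $(y+2)^I$ is absorbed because $y/\log y=\Omega(\sqrt n)\gg I$, which is exactly why $\beta<\beta_0$ matters. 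So the obstacle you describe (``one cannot simply work step by step in the naive way'') is in fact resolved by working step by step with the right per-step inequality and the right way of summing; that combination is the missing idea.
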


Before giving the proof, we first show that \refCl{claimP5} implies \refL{P5}. 
Using a union bound argument (to account for the $|\fS_s| \le n^{2s}$ vertex-sets $(A,B) \in \fS_s$), 
it is enough to show that $\cT_{A,B}$ implies $|\vF_I(A,B)| = (1\pm \delta)\rho|\vO_0(A,B)|$. 
Since all the $(\Gamma_{i+1})_{0 \le i < I}$ are edge-disjoint, by the recursive definition~\eqref{def:Ti} of~$\vF_I$ we have 
\begin{equation}\label{eq:FIO}
X-Y\le |\vF_I(A,B)|\le X .
\end{equation}
Noting $\mu^-\ge \ctau_I\mu^+ = (1-\delta/2)\mu^+$ (see~\eqref{eq:taui}), it follows that $\cT_{A,B}$ implies $X\le (1+\delta/2)\mu^+$ and 
\[
X-Y\ge \bigpar{1-\delta/2-\delta^2/9} \cdot \mu^-\ge (1-\delta+\delta^2/8)\mu^+. 
\] 
It thus suffices to show that $\mu^+ \sim \rho |\vO_0(A,B)|$, 
where $\rho=\sqrt{\beta (\log n)/n}$. 
But this is routine: indeed, since $\cq_i|\vO_0(A,B)|\ge\cq_i \cdot \gamma s^2 \gg \cq_i n \gg \sqrt{n}$ by~\eqref{ai7}, 
and $\cpi_I \sim \sqrt{\log(I \csig)} \sim \sqrt{\beta\log n}$ by~\eqref{ai2}, 
using the definition~\eqref{def:pii} of~$\cpi_I$ we readily infer 
\begin{equation}\label{eq:mup}
\begin{split}
\mu^+ &= \sum_{0\le i <I}(\cq_i|\vO_0(A,B)|\pm 1)p  \sim  \sum_{0\le i <I}\sigma\cq_i/\sqrt{n} \cdot |\vO_0(A,B)|\\
&= (\cpi_I-\csig)/\sqrt{n} \cdot |\vO_0(A,B)|  \sim \rho |\vO_0(A,B)| ,
\end{split}
\end{equation}
completing the proof of \refL{P5} (assuming \refCl{claimP5}).

\begin{proof}[Proof of \refCl{claimP5}]
We start with $X=\sum_{0\le i< I}|\vO_i(A,B) \cap \Gamma_{i+1}|$. 
Define 
\begin{equation*}
X_{i+1}^+ := \indic{\cG_i}\sum_{e\in\vO_i(A,B)}\indic{e\in\Gamma_{i+1}} 
 \qquad \text{and} \qquad 
X^+:=\sum_{0\le i< I}X_{i+1}^+ .
\end{equation*}
Note that $X=X^+$ when $\cG_{\le I}=\bigcap_{0\le i\le I}\cG_i$ holds. 
Let $Z_{i+1}^+ \eqd \Bin(\floor{\cq_{i} |\vO_0(A,B)|}, \: p)$ be independent random variables (where $\eqd$ means equality in distribution, as usual). 
Since the $\cF_i$-measurable event $\cG_i \subseteq \cQ_i$ implies $|\vO_i(A,B)| \le \cq_{i} |\vO_0(A,B)|$, 
it is easy to see that $\P(X_{i+1}^+\ge t \mid \cF_i)\le \P(Z_{i+1}^+\ge t)$ for $t\in\RR$. 
Setting 
\begin{equation}\label{eq:Zplus}
Z^+:= \sum_{0\le i< I}Z_{i+1}^+ \eqd \Bin\Bigpar{\sum_{0\le i< I}\floor{\cq_i |\vO_0(A,B)|}, \: p},
\end{equation}
a standard stochastic domination argument then shows $\P(X^+\ge t)\le \P(Z^+\ge t)$ for $t\in\RR$, 
so that  
\begin{equation}\label{eq:XUT}
\P\xpar{X\ge t \text{ and } \cG_{\le I}}\le \P\xpar{X^+\ge t}\le\P\xpar{Z^+\ge t} .
\end{equation}
Since $\cG_i$ also implies $|\vO_i(A,B)| \ge \ctau_i \cq_{i} |\vO_0(A,B)|$, 
an analogous argument gives 
\begin{equation}\label{eq:XLT}
\P\xpar{X\le t \text{ and } \cG_{\le I}} \le\P\xpar{Z^-\le t} 
\qquad \text{with} \qquad 
Z^- \eqd \Bin\Bigpar{\sum_{0\le i <I}\ceil{\ctau_i\cq_i |\vO_0(A,B)|}, \: p} .
\end{equation}
Combining $\mu^-\ge \ctau_I\mu^+\ge \mu^+/2$ (see~\eqref{eq:taui}) and~\eqref{eq:mup} with 
$|\vO_0(A,B)|\ge\gamma s^2$, using $\delta^2\sqrt{\beta}\gamma \cdot C\ge D_0 = 108$ (by assumption and~\eqref{def:D0beta0}) 
we have 
\begin{equation}\label{boundmu}
\delta^2\min\{\mu^-,\mu^+\}
\ge \tfrac{\delta^2}{2}\mu^+
\ge \tfrac{\delta^2}{3}\rho |\vO_0(A,B)|\ge
\tfrac{\delta^2}{3}\sqrt{\beta (\log n)/n} \cdot \gamma C\sqrt{ n\log n}\cdot s \ge 36 s\log n.
\end{equation}
Using \eqref{eq:Zplus}--\eqref{eq:XLT} and $\E Z^{\pm}=\mu^{\pm}$, by standard Chernoff bounds (see, e.g., \refR{rem:chernoff}) we obtain, say, 
\begin{equation}\label{eq:XUTLYmu}
\begin{split}
\P\bigpar{X \not\in\bigl[(1-\delta/2)\mu^-, \: (1+\delta/2)\mu^+\bigr] \text{ and } \cG_{\le I}} & \le \P\bigpar{Z^-\le (1-\delta/2)\mu^-} + \P\bigpar{Z^+\ge (1+\delta/2)\mu^+} \\
& \le \exp\bigpar{-\delta^2\mu^-/8} + \exp\bigpar{-\delta^2\mu^+/12} \le 2 n^{-3s} . 
\end{split}
\end{equation}

Finally, turning to $Y=\sum_{0\le i< I}|\vO_{i}(A,B)\cap E(\cD_{i+1})|$, for brevity we define 
\begin{equation*}
Y_{i+1} :=|\vO_{i}(A,B)\cap E(\cD_{i+1})|
 \qquad \text{and} \qquad 
y := \delta^2 \mu^-/9 .
\end{equation*}
Note that $Y=\sum_{0\le i < I}Y_{i+1}$ and $Y_{i+1} \in \NN$. 
Since $\cG_{\le i}=\bigcap_{0\le j\le i}\cG_j$, 
a union bound argument gives
\begin{equation}\label{eq:sumbound}
\begin{split}
\P\bigpar{Y\ge \delta^2 \mu^-/9 \text{ and } \cG_{\le I}} &\le
\sum_{\substack{(y_1,\dots,y_I)\in\mathbb{N}^I\\
		\sum_{1 \le i \le I}y_i=\ceil{y}}}
\P\Bigpar{\bigcap_{0\le i< I}\bigpar{Y_{i+1}\ge y_{i+1} \text{ and } \cG_{\le i+1}}}\\
&\le\sum_{\substack{(y_1,\dots,y_I)\in\mathbb{N}^I\\
		\sum_{0 \le i < I}y_{i+1}=\ceil{y}}}
\prod_{0\le i< I}\P\Bigpar{Y_{i+1}\ge y_{i+1} \mathrel{\Big|}
	\bigcap_{0\le j< i}\bigpar{Y_{j+1}\ge y_{j+1} \text{ and } \cG_{\le j+1}}} .
\end{split}
\end{equation}
Gearing up to apply \refT{thm:UT} to~$Y_{i+1}$, with an eye on 
$\cD_{i+1} \subseteq \cB_{i+1}$ and $\vF_i \subseteq \vE_{i}$ (see \refS{sec:nibbledetails}) 
we define
\begin{align*}
\cI & := \bigset{\set{wu,wv}\subseteq \vO_i: \: uv\in \vE_{i}, \: |\set{u,v,w}|=3, \: \set{wu, wv} \cap \vO_i(A,B) \neq\emptyset} \\
& \qquad \qquad \cup \: \bigset{\set{uv,vw,wu}\subseteq \vO_i: \: |\set{u,v,w}|=3, \: \set{uv, vw, wu} \cap \vO_i(A,B) \neq \emptyset}.
\end{align*}
Since each edge-set $\alpha \in \cI$ contains at least one edge from $\vO_i(A,B)$, 
when the $\cF_i$-measurable event $\cG_{\le i}$ holds we infer by the usual reasoning (using, e.g., $\cP_i \cap \cQ_i$ and $\max\{\cpi_i\cq_i, \cq_i^2\} \le 1$) that  
\begin{equation*}
\begin{split}
\sum_{\alpha \in \cI} \E(\indic{\alpha \subseteq \Gamma_{i+1}} \mid \cF_{i}) & \le \sum_{e \in \vO_i(A,B)} \sum_{\alpha \in \cI: e \in \alpha} p^{|\alpha|} \le \sum_{e \in \vO_i(A,B)}\Bigpar{|\cY_e(i)| \cdot p^2 + |\cX_e(i)| \cdot p^3 }\\
& \le \cq_i|\vO_0(A,B)| \cdot \bigpar{2\cpi_i\cq_i\sqrt{n} \cdot p^2 + \cq_i^2n \cdot p^3} \le 3\csig \cdot \cq_i|\vO_0(A,B)|p = : \mu_{i+1}^* .
\end{split}
\end{equation*}
Since~$\cD_{i+1}$ is a collection of edge-disjoint elements of $\cB_{i+1}$
(and thus $\set{\alpha \in \cD_{i+1}: \alpha \cap \beta \neq \emptyset} = \{\beta\}$ for all~$\beta \in \cD_{i+1}$),  
using $E(\cD_{i+1}) = \bigcup_{\alpha\in\cD_{i+1}}\alpha \subseteq \Gamma_{i+1} \subseteq \vO_i$, $|\alpha| \le 3$ and $\vF_i \subseteq \vE_{i}$ 
it is not difficult to check that 
\[
Y_{i+1} = 
\sum_{\alpha \in \cD_{i+1}} |\alpha \cap \vO_i(A,B)| 
\le 3 \cdot \sum_{\alpha \in \cI \cap \cD_{i+1}} \indic{\alpha \in \Gamma_{i+1}} \le 3 Z_{1},
\]   
where~$Z_1$ is defined as in \refT{thm:UT}. 
Applying inequality~\eqref{eq:C} with $C=1$ and~$\mu =\mu_{i+1}^*$ (in the probability space conditional on $\cF_{i}$; cf.\ the beginning of \refS{sec:setupconvetions}), 
when~$\cG_{\le i}$ holds it follows that, say, 
\begin{equation}\label{Boundeachterm}
\P (Y_{i+1}\ge y_{i+1} \mid \cF_{i})\le \P(Z_{1} \ge y_{i+1}/3 \mid \cF_{i})\le
\begin{cases}
\parfrac{e\mu_{i+1}^*}{y_{i+1}/3}^{y_{i+1}/3}\le\csig^{y_{i+1}/6}   & \text{if $y_{i+1}\ge 9 \mu_{i+1}^*/\sqrt{\csig}$,}\\
1 & \text{otherwise.}
\end{cases}
\end{equation}
Comparing the definition of $\sum_{0\le i < I}\mu_{i+1}^*$ with $\mu^-$, 
using $\ctau_i \ge \tau_I \ge 1/2$ (see~\eqref{eq:taui}) and $\csig \ll 1$ we see that 
\[
\sum_{\substack{0\le i < I: \\ y_{i+1} \le 9 \mu_{i+1}^*/\sqrt{\sigma}}} y_{i+1} \le 9/\sqrt{\sigma} \cdot \sum_{0\le i < I} \mu^*_{i+1} \le 9/\sqrt{\sigma} \cdot 6\sigma\mu^- \ll \delta^2 \mu^-/9 = y .
\]
So, inserting~\eqref{Boundeachterm} into~\eqref{eq:sumbound}, 
using~\eqref{boundmu} and the definition of~$s$ 
it follows that $y/\log y = \Omega(\sqrt{n}) \gg I$ and 
\begin{equation*}
\begin{split}
\P\bigpar{Y\ge \delta^2 \mu^-/9 \text{ and } \cG_{\le I}}
&\le \sum_{\substack{(y_1,\dots,y_I)\in\mathbb{N}^I\\ \sum_{0 \le i < I}y_{i+1}=\ceil{y}}}\csig^{\ceil{y}/6 -o(y)} 
	\le (y+2)^I \cdot \csig^{y/7} \le e^{-\omega(\delta^2 \mu^-)} \le n^{-\omega(s)} ,
\end{split}
\end{equation*}
completing the proof together with the probability estimate~\eqref{eq:XUTLYmu}. 
\end{proof}

\small
\bibliographystyle{plain}

\normalsize

\begin{appendix}	
\section{Appendix}\label{sec:apx}
\begin{proof}[Proof of \refT{thm:BDI}]
We may assume that $\cI=\{1, \ldots, |\cI|\}$. 
Recalling $X= f\bigl((\xi_i)_{i \in \cI}\bigr)$, we define  
\[
D_i :=\E(X\mid \xi_1,\ldots, \xi_{i-1}, \: \xi_i=1)-\E(X\mid \xi_1,\ldots, \xi_{i-1}, \: \xi_i=0) \in [-c_i,0],
\]
where $D_i \le 0$ follows from the assumption that~$f$ is decreasing, 
and $|D_i| \le c_i$ follows, as usual, from the assumed discrete Lipschitz property of~$f$. 
Analogous to, e.g., the proof of~\cite[Theorem~1.3]{TBDI}, 
writing $p_i = \P(\xi_i=1)$ it is routine to check that 
\[
\Delta_i := \E (X\mid \xi_1,\dots,\xi_i)-\E (X\mid \xi_1,\dots,\xi_{i-1}) = D_i(1-p_i)\indic{\xi_i=1}-D_ip_i\indic{\xi_i=0}.
\]
Since $1+x \le e^x$ for $x \in \RR$ and $e^{x} \le 1+x+x^2/2$ for $x \le 0$, 
for $\theta \ge 0$ it follows easily that 
\begin{equation*}
\begin{split}
\E\bigpar{e^{\theta \Delta_i}\mid \xi_1,\ldots,\xi_{i-1}}  & = (1-p_i) \cdot e^{-\theta D_ip_i} + p_i \cdot e^{\theta D_i(1-p_i)}=e^{-\theta D_ip_i}(1-p_i+p_ie^{\theta D_i})\\
& \le e^{-\theta D_ip_i + p_i(e^{\theta D_i}-1)} \le e^{\theta^2 D_i^2 p_i/2} \le  e^{\theta^2 c_i^2 p_i/2}.
\end{split}
\end{equation*}
Hence $\E\bigpar{e^{\theta \sum_{i \in \cI} \Delta_i}} \le e^{\theta^2 \lambda/2}$, where $\lambda = \sum_{i \in \cI} c_i^2p_i$. 
Noting $X-\E X = \sum_{i \in \cI} \Delta_i$, we deduce 
\[
\P(X \ge \E X + t)
=\P\bigpar{e^{\theta \sum_{i \in \cI} \Delta_i}\ge e^{\theta t}}
\le \E\bigpar{e^{\theta \sum_{i \in \cI} \Delta_i}} e^{-\theta t} \le e^{\theta^2 \lambda/2-\theta t} 
= e^{-t^2/(2\lambda)}
\]
by choosing $\theta = t/\lambda$, completing the proof of~\eqref{eq:BDImon}. 
\end{proof}

\begin{proof}[Proof of \refL{lem:aux}]
Note that the ODE $\cPsi'(x)=e^{-\cPsi^2(x)}$ and $\cPsi(0)=0$ has the implicit solution 
\begin{equation}\label{eq:impl}
x=\int_0^{\cPsi(x)}e^{t^2}dt.
\end{equation}
For~$x \ge 0$ it follows that~$\cPsi(x)$ is strictly increasing, so that~$\cPsi'(x) \ge 0$ is strictly decreasing. 
Recalling~$\cq_i = \cPsi'(i\csig)$, we deduce $q_i \ge q_{i+1}$ and $0 \le q_i \le q_0 = 1$ for all $i \ge 0$. 

To facilitate our upcoming calculations, we first prove the auxiliary claim that, for all $i \ge 0$, 
\begin{equation}\label{eq:pii}
\cpi_i-\cPsi(i\csig) \in [\csig, 2\csig] .
\end{equation}
Indeed, using $\cPsi(0)=0$ and  monotonicity of $\cPsi'$ (for the first two inequalities) together with~$\cPsi'(0)=1$ and~$\Psi'\ge 0$ (for the last inequality) it follows that 
\[
0 \le \biggpar{\sum_{0\le j\le i-1}\csig \cPsi'(j\csig)} - \cPsi(i\csig)\le \csig (\cPsi'(0)-\cPsi'(i\csig))\le \csig ,
\]
which establishes~\eqref{eq:pii} by the definition~\eqref{def:pii} of~$\cpi_i$ and $\cPsi'(j\csig)=q_j$. 

For~\eqref{ai2}, note that by~\eqref{eq:pii} and $I=\ceil{n^{\beta}} \gg 1$ it suffices to show $\sqrt{\log x}-1\le\cPsi(x)\le \sqrt{\log x}+1$ for~$x\ge e$ (with room to spare). 
The upper bound follows from $\int_0^{\sqrt{\log x}+1}e^{t^2}dt\ge x$ and~\eqref{eq:impl}. 
Using the inequality $(y-1)e^{-2y+1}\le 1$ with $y=\sqrt{\log x}$, the lower bound follows from $\int_0^{\sqrt{\log x}-1}e^{t^2}dt\le x$ and~\eqref{eq:impl}. 

Turning to~\eqref{ai7}, note that the above calculations for~\eqref{ai2} imply $\cPsi'(x)=e^{-\cPsi^2(x)}=x^{-1+o(1)}$ as~$x \to \infty$, 
so that $\cq_I = n^{-\beta+o(1)}$. 
Together with~$q_i \ge q_I$, it then is routine to see that~\eqref{ai7} holds for~$\beta < \beta_0=1/14$. 

Now we focus on~\eqref{ai1}. 
As a warm-up, note that $\cpi_i \le \cpi_I$ for $0 \le i \le I$ by the definition~\eqref{def:pii} of~$\cpi_i$, 
and that $\cpi_I \le \sqrt{\log (I\csig)} + 2 \ll \log n = \csig^{-1/2}$ by~\eqref{ai2}, so that $\sqrt{\csig}\pi_i \le 1$. 
Next, using~\eqref{eq:pii} together with the simple inequalities $e^{-x^2}x \le 1/2$ and $e^{-x^2}x^2\le 1/2$, we also infer that 
\begin{align}
\label{eq:qipii}
\cq_i\cpi_i  &
\le e^{-\cPsi^2(i\csig)}\bigpar{\cPsi(i\csig)+2\csig} \le 1,\\
\label{eq:qipii2}
\cq_i \cpi_i^2 & 
\le e^{-\cPsi^2(i\csig)}\bigpar{\cPsi^2(i\csig) + 4 \csig\cPsi(i\csig) + 4 \csig^2} \le 1.
\end{align}
Combined with $q_i \le 1$ this implies $\cq_i \cpi_i^j \le 1$ for all $j \in \set{0,1,2}$, completing the proof of~\eqref{ai1}. 

Turning to~\eqref{ai5}, note that $\cPsi((i+1)\csig) \le \cpi_{i+1} - \csig \le \cpi_i$ by~\eqref{eq:pii}, \eqref{def:pii} and~$q_i \le 1$. 
Since~$\cPsi \ge 0$ is increasing and~$\cPsi' \ge 0$ is decreasing, 
using $q_j=\cPsi'(j\csig)$ together with $\cPsi''(x)=-2\cPsi'(x)^2\cPsi(x)$ and~\eqref{eq:qipii} it follows that 
\begin{equation}\label{eq:ai5eq}
|q_i-q_{i+1}| \le \sigma \max_{i\csig\le \xi\le (i+1)\csig} |\cPsi''(\xi)| \le \csig \cdot 2 \cPsi'(i\csig)^2 \cdot \cPsi((i+1)\csig) \le \csig \cdot 2 q_i^2 \cpi_i \le \csig \cdot 2\min\{q_i, q_i \cpi_i\} .
\end{equation}
Noting that~\eqref{eq:ai5eq} also implies~$q_i \sim q_{i+1}$, 
this completes the proof of~\eqref{ai5} since $q_i \ge q_{i+1}$. 

Finally, for~\eqref{ai4} it suffices to show $|\cq_i-\cq_{i+1}-2\csig\cq_i^2\cpi_i|\le 8\csig^2\cq_i^2$. 
Since $\cq_i = \cPsi'(i\csig)$, it follows that 
\begin{equation*}
\bigabs{q_i-q_{i+1}+\csig\cPsi''(i\csig)} \le \tfrac{\csig^2}{2} \max_{i\csig\le \xi\le (i+1)\csig}|\cPsi'''(\xi)| .
\end{equation*}
As~$\cPsi'(x)=e^{-\cPsi^2(x)}$, it is routine to check that $\cPsi'''(x)=2 \cPsi'(x)^3 \bigl(4\cPsi^2(x)-1\bigr)$.  
Since~$\cPsi \ge 0$ is increasing and~$\cPsi' \ge 0$ is decreasing, 
using $\cPsi((i+1)\csig) \le \cpi_i$ (as above), \eqref{eq:qipii2} and $q_i \le 1$ we infer 
\begin{equation*}
\max_{i\csig\le \xi\le (i+1)\csig}|\cPsi'''(\xi)| \le 2 \cPsi'(i\csig)^3 \cdot \max\bigset{4\cPsi^2((i+1)\csig), \: 1} \le 2 q_i^3\max\bigset{4\pi_{i}^2, \: 1} \le 8q_i^2 .
\end{equation*}
Furthermore, since $\cPsi''(x)=-2\cPsi'(x)^2\cPsi(x)$, using~\eqref{eq:pii} we deduce 
\begin{equation*}
\bigabs{\cPsi''(i\csig)  - (-2q_i^2 \cpi_i)} = \bigabs{-2 q_i^2 \cPsi(i \csig) + 2 q_i^2 \cpi_i} \le 4 \csig q_i^2  , 
\end{equation*}
which completes the proof of~\eqref{ai4}. 
\end{proof}

\end{appendix}

\end{document}